\newcolumntype{L}[1]{>{\raggedright\arraybackslash}p{#1}}
\DeclareMathAlphabet{\pazocal}{OMS}{zplm}{m}{n}
\renewcommand{\cV}{\pazocal{V}}
\renewcommand{\cN}{\pazocal{N}}
\renewcommand{\cA}{\pazocal{A}}
\renewcommand{\cT}{\pazocal{T}}
\newcommand{\cTN}{{\cT\!\cN}}
\newcommand{\tu}{\tilde{u}}
\newcommand{\tU}{\tilde{U}}
\newcommand{\tv}{\tilde{v}}
\newcommand{\tV}{\tilde{V}}
\newcommand{\Var}{\mathrm{Var}}
\newcommand{\stdunif}{\mathrm{Unif}(0,1)}
\newcommand{\imi}{\mathrm{i}}
\renewcommand{\left}{\mleft}
\renewcommand{\right}{\mright}
\newcommand{\bdelta}{\bm{\delta}}
\newcommand\nnfootnote[1]{%
  \begin{NoHyper}
  \renewcommand\thefootnote{}\footnote{#1}%
  \addtocounter{footnote}{-1}%
  \end{NoHyper}
}
\title{A Tractable Family of Smooth Copulas with Rotational Dependence: Properties, Inference, and Application}
\author[1]{Micha\"{e}l Lalancette$^*$}
\author[2]{Robert Zimmerman$^*$}
\affil[1]{Département de mathématiques, Université du Québec à Montréal}
\affil[2]{Department of Mathematics, Imperial College London}
\date{\today}
\begin{document}

\maketitle
\nnfootnote{$^*$Equal contribution}

\begin{abstract}
    We introduce a new family of copula densities constructed from univariate distributions on $[0,1]$. Although our construction is structurally simple, the resulting family is versatile: it includes both smooth and irregular examples, and reveals clear links between properties of the underlying univariate distribution and the strength, direction, and form of multivariate dependence. The framework brings with it a range of explicit mathematical properties, including interpretable characterizations of dependence and transparent descriptions of how rotational forms arise. We propose model selection and inference methods in parametric and nonparametric settings, supported by asymptotic theory that reduces multivariate estimation to well-studied univariate problems. Simulation studies confirm the reliable recovery of structural features, and an application involving neural connectivity data illustrates how the family can yield a better fit than existing models.
\end{abstract}

\tableofcontents

\section{Introduction}

A \emph{copula} is a $d$-dimensional distribution function $C:[0,1]^d \to [0,1]$ with uniform marginal distributions: $C(1, \dots, 1, u_j, 1, \dots, 1) = u_j$ for $u_j \in [0,1]$ and $j \in \{1, \dots, d\}$. Copulas provide a rigorous way to isolate and model statistical dependence; by Sklar's theorem \citep{Sklar:1959}, any multivariate distribution can be decomposed into its margins and a copula which captures the dependence structure of the distribution independently of the margins. An extensive literature \citep{schweizerThirtyYearsCopulas1991,joeMultivariateModelsMultivariate1997,nelsenIntroductionCopulas2006,durantePrinciplesCopulaTheory2015} reveals a vast array of dependence structures that can be described explicitly using copulas, which have been used in countless applied fields including risk management \citep{embrechts2001modelling}, finance \citep{cherubiniCopulaMethodsFinance2004}, hydrology \citep{genest2007metaelliptical}, environmental science \citep{salvadoriExtremesNatureApproach2007}, and epidemiology \citep{shihInferencesAssociationParameter1995}, to name only a few. A central challenge is to construct copula families that are both flexible enough to capture a wide range of dependence structures and tractable enough to allow rigorous statistical analysis.

One such structure is rotational dependence, which is more naturally described in terms of the dependence structure of angular (or circular) random variables. Examples arise naturally in meteorology, including wind direction or temporal variables such as time of day or time of year; similar situations abound in the earth sciences, physics, and psychology, among other fields \citep[for a broad overview of applications and methods in multivariate directional statistics, see][]{mardiaDirectionalStatistics2009}. Related work has introduced \emph{circulas}, the analogue of copulas for circular data  \citep{jonesClassCirculasCopulas2015a,juppCopulaeProductsCompact2015,ameijeiras-alonsoSmoothedCirculasNonparametric2024}; these provide a copula-like decomposition on the torus with circular uniform marginals and have been studied in both parametric and nonparametric regimes. They allow one to directly model dependence structure in multivariate angular data. Another surprising appearance of rotational dependence is in neuroscience, where oscillatory phases across different brain regions may tend to advance or recede together. \citet{kleinTorusGraphsMultivariate2020} introduced a graphical model for multivariate phase data in order to capture such dependence, motivated by local field potential recordings from the hippocampus and prefrontal cortex of rhesus monkeys \citep{brincatFrequencyspecificHippocampalprefrontalInteractions2015}, and found that their data is well explained by a tree-structured version of their \emph{phase difference model} with uniform marginals. By results in the present paper, this model can be formulated as follows: for each adjacent pair $(j, k)$ of locations, two standardized phase angles $U_j,U_k \sim \stdunif$ are coupled such that independently of $U_j$, $U_j - U_k \bmod{1}$ follows a von Mises distribution, the parameters of which control the strength of angular dependence. The bivariate distributions of the adjacent pairs are then assembled by \citet{kleinTorusGraphsMultivariate2020} into a tree graphical model \citep{maathuisHandbookGraphicalModels2018}.

In this paper, we present a new class $\cC$ of absolutely continuous copulas on $[0,1]^d$ that reduces the multivariate dependence problem to a univariate construction, thereby achieving both tractability and flexibility. Each copula in $\cC$ is ``generated'' by an underlying univariate density $f$ on $[0,1]$ whose smoothness and dispersion determine both the smoothness of the corresponding copula density and the strength of dependence between the marginal components. Despite the apparent simplicity of its construction, the family $\cC$ is very rich: in the bivariate case, for example, it generalizes the aforementioned \citet{kleinTorusGraphsMultivariate2020} model by allowing $U_j - U_k \bmod{1}$ to follow \emph{any} continuous distribution on $[0,1]$, rather than being restricted to the parametric class of von Mises distributions. This generalization allows far more flexibility in the form of angular dependence. The class contains both statistically motivated distributions and highly pathological examples, can exhibit extreme positive and negative dependence, and admits transparent expressions for concordance measures such as Kendall's tau and Spearman's rho. In arbitrary dimensions, a wide array of attractive mathematical properties make $\cC$ an interesting addition to the copula literature.

In order to allow for negative rotational dependence between two variables (termed ``reflectional'', as opposed to rotational, dependence in \cite{kleinTorusGraphsMultivariate2020}), we close the family $\cC$ under reflection of any axis. In addition to the density $f$, this adds a discrete parameter which essentially contains information about which of the $d$ axes are reflected. We refer to the task of learning the discrete parameter as model selection, and we introduce a nonparametric method to solve that problem which is shown to be consistent. We further discuss parametric inference within the class $\cC$, where it becomes clear that maximum likelihood estimation in flexible parametric families poses no challenge beyond the univariate case. On the nonparametric side, we analyze kernel density estimators and establish asymptotic properties using empirical process techniques. We illustrate our methodology through an extensive simulation study, and we then revisit the neural connectivity data of \citet{kleinTorusGraphsMultivariate2020}, showing that our approach not only confirms the findings in that paper, but also offers more flexibility in modelling the pairwise interactions therein. These illustrations demonstrate that the proposed copula family is able to capture rotational dependence patterns while retaining ease of interpretation and offering rigorous statistical guarantees.

The remainder of the paper is organized as follows. \cref{sec:construction} formally constructs the family $\cC$ and details many of its interesting properties. \cref{sec:examples} illustrates the breadth of the family with concrete examples, both simple and highly pathological. \cref{sec:inference} develops model selection as well as parametric and nonparametric parameter estimation. \cref{sec:simsandapps} presents the simulation study and neural connectivity application. \cref{sec:discussion} concludes with a discussion and directions for future work.

\subsection{Notation and conventions}

We now define some notation and conventions which will be used repeatedly in the sequel.

Throughout, we fix an integer dimension $d \geq 2$ whose value is clear from context. Bold elements always refer to vectors (written in lower case) or random vectors (written in upper case). For $\bx = (x_1, \ldots, x_d) \in \R^d$ and $j \in \{1,\ldots, d\}$, we write $\bx_{-j} = (x_1,\ldots,x_{j-1},x_{j+1},\ldots,x_d) \in \R^{d-1}$ for $\bx$ with its $j$th element removed, and for $J \subseteq \{1,\ldots,d\}$ we write $\bx_J \in \R^{|J|}$ for the subvector of $\bx$ indexed by the elements of $J$ and $\bx_{-J}$ for $\bx_{J^c}$, the complement being taken with respect to $\{1, \dots, d\}$. We slightly abuse notation and write $(\bx_J, \bx_{-J})$ for the vector with entries $\bx_J$ in positions $J$ and entries $\bx_{-J}$ in positions $J^c$; the same conventions apply to random vectors. The $j$th canonical basis vector in $\R^d$ is denoted $\be_j$, and the zero and all-ones vectors in $\R^d$ are denoted by $\bzero$ and $\bone$ respectively. Inequalities between vectors are taken componentwise: $\bx \leq \by$ means $x_j \leq y_j$ for all $j \in \{1,\ldots,d\}$, and $[\bx, \by]$ refers to the set $\{\bu \in \R^d: \bx \leq \bu \leq \by\}$. By $x \bmod{1}$ we mean the fractional part of $x \in \R$, given by $x - \lfloor x \rfloor$, where $\lfloor x \rfloor := \sup \{m \in \Z: m \leq x\}$ denotes the largest integer upper bounded by $x$. We denote the sum of $x_1$ and $x_2$ modulo $1$ --- that is, the quantity $(x_1 + x_2) \bmod{1}$ --- as $x_1 \oplus x_2$, and similarly $\bigoplus_{j=1}^d x_j := \left( \sum_{j=1}^d x_j \right) \bmod{1}$; we refer to the latter quantity as the \emph{wrapped sum} of $\bx$. We also refer to $x_1 - x_2 \bmod{1}$ as a \emph{wrapped difference}.

Unless noted otherwise, all densities and integrals are taken with respect to Lebesgue measure on $\R^m$, with $m$ clear from context. We define $\cF_{[0,1]}$ as the set of univariate densities on $[0,1]$ --- i.e., any $f \in \cF_{[0,1]}$ is measurable, non-negative and has a Lebesgue integral of $1$ over $[0,1]$ --- and for any fixed $f \in \cF_{[0,1]}$ under consideration, we write $F$ for its corresponding cdf. $\Pi(\bu) := \prod_{j=1}^d u_j$ denotes the $d$-dimensional independence copula. When they exist, the derivatives of a $d$-dimensional copula $C$ are notated as $\partial_j C(\bu):=\partial C(\bu)/\partial u_j$ and $\partial^2_{jk} C(\bu):=\partial^2 C(\bu)/(\partial u_j\,\partial u_k)$ for $j,k \in \{1,\ldots,d\}$. We index coordinates by $j,k,l \in \{1,\ldots,d\}$ and observations by $i \in \{1,\ldots,n\}$, where $n$ usually refers to a sample size. A hat over a function or parameter (such as $\hat C$ or $\hat \theta$) denotes an estimator based on a random sample. We often employ the shorthand $g(x):= \mathrm{expression}(x)$ to mean that the function $g$ is defined pointwise by the mapping $x \mapsto \mathrm{expression}(x)$, with its domain and codomain usually clear from context. For a proposition $P(x)$, we write the indicator function for $P(x)$ as $\One\{P(x)\} := \begin{cases}
    1, & P(x) \text{ is true} \\ 0, & P(x) \text{ is false}
\end{cases}$.

For $1 \leq p < \infty$ and a set $A$, we write $L^p(A)$ and $L^\infty(A)$ for the sets of $p$-integrable and bounded real-valued functions on $A$, respectively. For such functions, $\|\cdot\|_p$ and $\|\cdot\|_\infty$ denote the $p$ and supremum norm, respectively. We denote the circle group as $\T := \R/\Z$ with addition $\oplus$, which we identify with $[0,1)$ via the canonical projection $x \mapsto x \bmod{1}$. Under this identification, Haar measure on $\T$ coincides with Lebesgue measure $\lambda$ on $[0,1]$, with the points $0$ and $1$ identified. For this reason, we write Haar integrals as Lebesgue integrals over $[0,1]$ and $[0,1]^d$ (or subsets thereof); when the group structure is relevant, we write $\T$ explicitly. Finally, when complex numbers arise, we write $\imi \in \C$ for the imaginary unit.

\section{Construction of \texorpdfstring{$\cC$}{Cf} and basic properties}\label{sec:construction}

\subsection{The construction}\label{sub:construction}

Our construction is very simple and works in any dimension $d \geq 2$. Rather than explicitly defining a copula in $\cC$, we specify a copula density, which in turn determines the copula itself. Let $f$ be a measurable function on $[0,1]$, and define
\begin{equation}\label{eq:ourcopulaORIG}
    c_{f}(\bu) := f\left( \bigoplus_{j=1}^d u_j \right), \quad \bu \in [0, 1]^d.
\end{equation}
This definition immediately generalizes to the case in which the $u_j$ in the wrapped sum above are given arbitrary signs; thus, for any $\bs \in \{0,1\}^d$, we define\footnote{Note that an equivalent definition is given by $c^{\bs}_{f}(\bu) := f\left( \bigoplus_{j=1}^d (-1)^{s_j} u_j \right)$, which, while slightly more concise, suppresses the fact that $c^{\bs}_f$ is obtained by reflecting $c^{\bzero}_f$ around the hyperplanes $\{\bu \in [0,1]^d : u_j = \tfrac{1}{2}\}$ for each $j$ such that $s_j=1$.}
\begin{equation}\label{eq:ourcopula}
    c^{\bs}_{f}(\bu) := f\left( \bigoplus_{j=1}^d u_j^{1-s_j} (1 - u_j)^{s_j} \right), \quad \bu \in [0,1]^d,
\end{equation}
so that $c_f = c^{\bzero}_{f}$. For ease of notation, we often write $\tu_j := u_j^{1-s_j} (1 - u_j)^{s_j}$ and similarly for other variables, where in general a tilde on a variable $u \in [0,1]$ denotes $u^{1-s}(1-u)^{s}$, with the signature component $s \in \{0,1\}$ clear from context; the same convention holds for random variables $U$. Thus, we have the equivalent representation 
\[
    c^{\bs}_{f}(\bu) 
    :=
    f\left( \bigoplus_{j=1}^d \tu_j \right), \quad \bu \in [0,1]^d.
\]
Our first result shows that \cref{eq:ourcopula} yields a genuine copula density precisely when $f$ is itself a univariate density on $[0,1]$:

\begin{prop}\label{prop:Ccharacteriation}
    \cref{eq:ourcopula} defines a copula density if and only if $f \in \cF_{[0,1]}$. In that case, for fixed $\bs$ the copula is uniquely defined (up to almost everywhere equivalence) by $f$.
\end{prop}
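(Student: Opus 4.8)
The plan is to reduce both implications, and the uniqueness claim, to one elementary fact about the signed wrapped sum
\[
    \psi\colon [0,1]^m \to [0,1], \qquad \psi(\bu) := \bigoplus_{j=1}^m \tu_j = \bigoplus_{j=1}^m (-1)^{s_j}u_j,
\]
namely that $\psi$ pushes $m$-dimensional Lebesgue measure forward to Lebesgue measure on $[0,1]$, for every $m\ge 1$ and every sign pattern. I would establish this first. Identifying $[0,1)$ with the circle group $\T$, the map $\psi$ is the continuous homomorphism $\T^m\to\T$, $(t_1,\dots,t_m)\mapsto\sum_{j}(-1)^{s_j}t_j$, which is surjective (already varying $t_1$ alone, with the other coordinates fixed at $0$, sweeps out all of $\T$); a surjective continuous homomorphism of compact groups carries normalized Haar measure to normalized Haar measure, and on $\T$ this is Lebesgue measure on $[0,1]$. (If one prefers to avoid group language: each reflection $u\mapsto 1-u$ preserves Lebesgue measure on $[0,1]$, so it suffices to treat $\bu\mapsto(\sum_j u_j)\bmod 1$, and one application of Fubini, using that $v\mapsto(v+c)\bmod 1$ is a measure-preserving bijection of $[0,1)$ for each fixed $c$, finishes it.) Since $\psi$ is also Borel, it follows in passing that $c^{\bs}_f=f\circ\psi$ is Lebesgue measurable whenever $f$ is.

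Granting this, the ``if'' direction is short. If $f\in\cF_{[0,1]}$, then, recalling from \eqref{eq:ourcopula} that $c^{\bs}_f(\bu)=f\bigl(\bigoplus_{j=1}^d\tu_j\bigr)$, we have $c^{\bs}_f=f\circ\psi\ge 0$ and measurable, while the pushforward property with $m=d$ gives $\int_{[0,1]^d}c^{\bs}_f=\int_{[0,1]}f=1$; hence $c^{\bs}_f$ is the density of a probability measure on $[0,1]^d$, with distribution function $C$. To see that $C$ is a copula I would check that its $j$th marginal density $u_j\mapsto\int_{[0,1]^{d-1}}c^{\bs}_f(\bu)\,d\bu_{-j}$ equals $1$ for almost every $u_j$: holding $u_j$ fixed and integrating out any single one of the remaining $d-1\ge 1$ coordinates already produces the constant $1$ (again by invariance of Lebesgue measure on $[0,1)$ under $v\mapsto 1-v$ and $v\mapsto(v+c)\bmod 1$), and the remaining $d-2$ integrations leave it unchanged. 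Thus $c^{\bs}_f$ is a copula density; this is the step where $d\ge 2$ is used.

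For the ``only if'' direction, suppose $c^{\bs}_f$ is a copula density; as the density of a probability measure on $[0,1]^d$ it is nonnegative almost everywhere and has integral $1$. Since $\{\bu:c^{\bs}_f(\bu)<0\}=\psi^{-1}(\{t\in[0,1]:f(t)<0\})$ has Lebesgue measure zero and $\psi$ is measure-preserving, $\{t:f(t)<0\}$ has Lebesgue measure zero, so $f\ge 0$ almost everywhere; moreover the pushforward property gives $\int_{[0,1]}f=\int_{[0,1]^d}c^{\bs}_f=1$, whence $f\in\cF_{[0,1]}$ (measurability of $f$ being part of the standing hypotheses). Uniqueness is of the same flavour: if $f=f'$ almost everywhere, then $\{\bu:c^{\bs}_f(\bu)\ne c^{\bs}_{f'}(\bu)\}=\psi^{-1}(\{f\ne f'\})$ has Lebesgue measure zero, so $c^{\bs}_f=c^{\bs}_{f'}$ almost everywhere and the two give the same copula; conversely, $\{f\ne f'\}$ always has the same Lebesgue measure as $\psi^{-1}(\{f\ne f'\})=\{\bu:c^{\bs}_f(\bu)\ne c^{\bs}_{f'}(\bu)\}$, so distinct almost-everywhere classes of $f$ yield distinct densities and hence distinct copulas. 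Thus the copula both determines and is determined by the almost-everywhere equivalence class of $f$.

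I do not anticipate a genuine obstacle: once the measure-preservation of $\psi$ is in hand, the rest is bookkeeping with null sets. The two spots that call for mild care are (i) phrasing the pushforward fact for all $m\ge 1$, not just $m=d$, so that its univariate instance can be reused to certify uniform margins, and (ii) keeping track of which half of the definition of ``copula density'' is in play --- the ``only if'' direction extracts only non-negativity and unit mass, whereas the ``if'' direction must separately establish the uniform margins.
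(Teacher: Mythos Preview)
Your proof is correct and follows essentially the same architecture as the paper's: both hinge on the single fact that the signed wrapped sum pushes Lebesgue measure on $[0,1]^m$ forward to Lebesgue measure on $[0,1]$, and then use this (in dimensions $d$ and $d-1$) to verify unit mass, uniform margins, the converse, and the uniqueness by preimages of null sets. The only notable difference is in how that pushforward lemma is established---the paper computes characteristic functions at integer frequencies (and states the result for general full-rank integer-matrix affine maps $\bu\mapsto(\Ab\bu)\oplus\bb$, \cref{lem:Lebesguepushforward}), whereas you invoke the preservation of Haar measure under surjective homomorphisms of compact groups, with a Fubini-based alternative; both are clean, and your route is arguably more conceptual while the paper's yields a slightly more general auxiliary lemma that gets reused later.
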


The proof of \cref{prop:Ccharacteriation} depends on the following lemma, which essentially states that the pushforward of Haar measure on the direct product $\T^p$ by an affine wrapped sum is Haar measure on a lower-dimensional subspace of $\T^p$. More precisely, let $\Ab \in \Z^{m \times p}$ and $\bb \in \R^m$, with $m \leq p$. We define the map $\Psi_{\Ab,\bb}(\bu) := (\Ab \bu) \oplus \bb$ --- to be used again in the sequel --- where $\Ab \bu$ is computed in $\R^m$ and then reduced modulo $1$ componentwise.

\begin{lemm}\label{lem:Lebesguepushforward}
    If $\bU \sim \mathrm{Unif}([0,1]^p)$ and $\mathrm{rank}(\Ab) = m$, then $\Psi_{\Ab,\bb}(\bU) \sim \mathrm{Unif}([0,1]^m)$.
\end{lemm}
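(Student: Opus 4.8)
The plan is to work on the compact group $\T^m$ and identify the law of $\Psi_{\Ab,\bb}(\bU)$ by matching Fourier coefficients with those of Haar (uniform) measure. Recall that for a probability measure $\mu$ on $\T^m$, its Fourier coefficient at $\bk \in \Z^m$ is $\hat\mu(\bk) = \int_{\T^m} e^{2\pi\imi \langle \bk, \bx\rangle}\dif\mu(\bx)$; since the characters $\bx \mapsto e^{2\pi\imi\langle \bk,\bx\rangle}$, $\bk \in \Z^m$, form a self-conjugate, point-separating, constant-containing subalgebra of $C(\T^m)$, they are dense by Stone--Weierstrass, so the collection $\{\hat\mu(\bk)\}_{\bk}$ determines $\mu$. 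Moreover $\mu = \mathrm{Unif}(\T^m)$ if and only if $\hat\mu(\bzero) = 1$ and $\hat\mu(\bk) = 0$ for all $\bk \neq \bzero$. So the whole proof reduces to computing the Fourier coefficients of $\mu := $ the law of $\Psi_{\Ab,\bb}(\bU)$ and checking they have this signature.

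First I would use that reducing a real vector modulo $1$ componentwise leaves every integer character unchanged: if $\bk \in \Z^m$, then $e^{2\pi\imi\langle \bk, \bx \oplus \bell\rangle}$ — meaning $e^{2\pi\imi\langle\bk,\, \bx \bmod 1\rangle}$ after the integer shift $\bell$ — equals $e^{2\pi\imi\langle\bk,\bx\rangle}$, because each coordinate contributes a factor $e^{-2\pi\imi k_j \lfloor x_j\rfloor} = 1$. Applied with $\bx = \Ab\bU + \bb$ (computed in $\R^m$), this gives
\[
    \E\left[ e^{2\pi\imi \langle \bk, \Psi_{\Ab,\bb}(\bU)\rangle} \right]
    = \E\left[ e^{2\pi\imi \langle \bk, \Ab\bU + \bb\rangle} \right]
    = e^{2\pi\imi\langle\bk,\bb\rangle}\,\E\left[ e^{2\pi\imi\langle \Ab^\top\bk,\, \bU\rangle} \right].
\]
Since $\Ab \in \Z^{m\times p}$ we have $\Ab^\top \bk \in \Z^p$, and because the coordinates of $\bU$ are independent $\stdunif$ variables,
\[
    \E\left[ e^{2\pi\imi\langle \bm, \bU\rangle}\right] = \prod_{j=1}^p \int_0^1 e^{2\pi\imi m_j t}\dif t = \One\{\bm = \bzero\}, \qquad \bm \in \Z^p,
\]
so that $\hat\mu(\bk) = e^{2\pi\imi\langle\bk,\bb\rangle}\,\One\{\Ab^\top\bk = \bzero\}$.

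Finally I would invoke the rank hypothesis: $\mathrm{rank}(\Ab^\top) = \mathrm{rank}(\Ab) = m$, so $\Ab^\top$ is a linear map on $\R^m$ of full rank, hence injective; thus $\Ab^\top\bk = \bzero$ forces $\bk = \bzero$. Therefore $\hat\mu(\bzero) = 1$ and $\hat\mu(\bk) = 0$ for every $\bk \in \Z^m \setminus \{\bzero\}$, which is exactly the Fourier signature of $\mathrm{Unif}(\T^m) = \mathrm{Unif}([0,1]^m)$, and the conclusion follows from uniqueness of the measure with prescribed Fourier coefficients.

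I do not expect a genuine obstacle here: the computation is short. The two points that merely require care are (i) the justification that Fourier coefficients determine a probability measure on $\T^m$, so that matching them suffices, and (ii) the observation that integer-valuedness of $\Ab$ is precisely what makes the ``$\bmod\ 1$ is invisible to integer characters'' step legitimate. An alternative route that avoids harmonic analysis is to put $\Ab$ in Smith normal form over $\Z$ and use that each one-dimensional map $t \mapsto (dt)\bmod 1$ with $d \in \Z\setminus\{0\}$ pushes Lebesgue measure on $[0,1]$ forward to itself (together with $\mathrm{GL}_p(\Z)$- and $\mathrm{GL}_m(\Z)$-invariance of Haar measure), but the character argument is cleaner and absorbs the affine shift $\bb$ at no cost.
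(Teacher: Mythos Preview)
Your proposal is correct and follows essentially the same approach as the paper: both compute the Fourier coefficients (equivalently, the characteristic function at integer frequencies) of the law of $\Psi_{\Ab,\bb}(\bU)$, use that $\Ab^\top\bk \in \Z^p$ so each factor $\int_0^1 e^{2\pi\imi m_j t}\dif t$ vanishes unless $m_j = 0$, and invoke the full-rank hypothesis to conclude $\Ab^\top\bk = \bzero \Rightarrow \bk = \bzero$. The only cosmetic difference is that you justify uniqueness via Stone--Weierstrass while the paper cites Feller's theorem for distributions supported on $[0,1]^m$; the substance is identical.
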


\begin{proof}
    Let $\bV \sim \mathrm{Unif}([0,1]^m)$. Because both $\Psi_{\Ab,\bb}(\bU)$ and $\bV$ are supported on $[0,1]^m$, it suffices to verify that their respective characteristic functions $\varphi_{\Psi_{\Ab,\bb}(\bU)}$ and $\varphi_{\bV}$ are equal at the integer frequencies $2 \pi \bk$, for $\bk \in \Z^m$ \citep[see, e.g.,][Chapter XIX, Section 6, Theorem 1]{fellerIntroductionProbabilityTheory1968}. Indeed,
    \[
        \varphi_{\Psi_{\Ab,\bb}(\bU)}(2 \pi \bk)
        =
        \E\left[e^{2 \pi \imi \Psi_{\Ab,\bb}(\bU)^\top \bk}  \right]
        =
        e^{2 \pi \imi \bb^\top \bk} \E\left[e^{2 \pi \imi (\Ab \bU)^\top \bk}  \right],
    \]
    using the fact that $e^{2\pi \imi k(\bigoplus_{j=1}^m x_j)} = \prod_{j=1}^m e^{2\pi \imi k x_j}$ for any $\bx \in \R^m$ (since $\bigoplus_{j=1}^m x_j = \sum_{j=1}^m x_j - q$ for some $q \in \Z$ and $e^{2\pi \imi k q} = 1$). In turn, by Fubini's theorem the above display is equal to
    \[
        e^{2 \pi \imi \bb^\top \bk} \prod_{j=1}^p \int_0^1 e^{2 \pi \imi (\Ab^ \top \bk)_j u_j} \dif u_j
        =
        e^{2 \pi \imi \bb^\top \bk} \prod_{j=1}^p \One\{(\Ab^ \top \bk)_j = 0\}
        =
        e^{2 \pi \imi \bb^\top \bk} \One\{\Ab^\top \bk = \bzero\}
        =
        \One\{\bk = \bzero\}
    \]
    where the first equality holds because $\Ab^\top \bk \in \Z^p$ and $\int_0^1 e^{2 \pi \imi k u} \dif u = 0$ for $k \in \Z \setminus \{0\}$, and the last equality holds because $\Ab$ is of full row rank (i.e., $\Ab^\top \bk = \bzero$ implies $\bk = \bzero$). This is exactly the characteristic function of $\bV$ evaluated at $2 \pi \bk$, which proves the result.
\end{proof}

\begin{rem}
    The \emph{characters} of the group $\T^m$ are the continuous homomorphisms $\chi_{\bk}(\bu) := e^{2 \pi \imi \bk^\top \bu}$ into $S^1$, for $\bk \in \Z^m$. Haar measure $\lambda^{\otimes m}$ on $\T^m$ is characterized by the orthogonality relations $\int_{\T^m} \chi_{\bk}(\bu) \dif \lambda^{\otimes m}(\bu) = \One\{\bk = \bzero\}$ for all $\bk \in \Z^m$, and by the uniqueness theorem for Fourier-Stieltjes transforms on locally compact abelian groups \citep[][Theorem 1.3.6]{rudinFourierAnalysisGroups1962}, any probability measure $\mu$ on $\T^m$ is uniquely determined by the Fourier coefficients $\int_{\T^m} \chi_{\bk}(\bu) \dif \mu(\bu)$. Since $\Psi_{\Ab, \bb}(\bU)$ is $\T^m$-valued, its Fourier coefficients are exactly the quantities $\E[\chi_{\bk}(\Psi_{\Ab,\bb}(\bU))] = \varphi_{\Psi_{\Ab,\bb}(\bU)}(2 \pi \bk)$ computed above; hence, the law of $\Psi_{\Ab,\bb}(\bU)$ is $\lambda^{\otimes m}$.
\end{rem}

\begin{proof}[Proof of \cref{prop:Ccharacteriation}]
    Let $\bU \sim \mathrm{Unif}([0,1]^d)$. Suppose first that $f \in \cF_{[0,1]}$. Fix an index $j \in \{1,\ldots,d\}$ and $u_j \in [0,1]$, and define $\Psi_{j,u_j}^{\bs}(\bu_{-j}) := \left( \bigoplus_{k \neq j} \tu_k \right) \oplus \tu_j$. By \cref{lem:Lebesguepushforward}, $\Psi_{j,u_j}^{\bs}(\bU_{-j}) \sim \stdunif$, so
    \[
        \int_{[0,1]^{d-1}} c_f^{\bs}(\bu) \dif \bu_{-j}
        =
        \int_{[0,1]^{d-1}} f\left( \Psi_{j,u_j}^{\bs}(\bu_{-j})\right) \dif \bu_{-j}
        =
        \E\left[f\left( \Phi_{j,u_j}^{\bs}(\bU_{-j}) \right) \right]
        =
        \int_0^1 f(x) \dif x
        =
        1.
    \]
    That is, $c_f^{\bs}$ --- which is non-negative and measurable because $f$ itself is --- also has uniform margins and is therefore a copula density. Conversely, suppose that $c_f^{\bs}$ is a copula density. Observe that $\bigoplus_{j=1}^d \tU_j \sim \stdunif$ by \cref{lem:Lebesguepushforward}, so
    \[
        1
        =
        \int_{[0,1]^d} c_f^{\bs}(\bu) \dif \bu
        =
        \int_{[0,1]^d} f\left( \bigoplus_{j=1}^d \tu_j \right) \dif \bu
        =
        \E\left[f\left(\bigoplus_{j=1}^d \tU_j\right)\right]
        =
        \int_0^1 f(x) \dif x.
    \]
    Moreover, $f$ is non-negative and measurable because $c^{\bs}_f$ is; thus $f \in \cF_{[0,1]}$. For uniqueness, suppose there exist $f, g \in \cF_{[0,1]}$ such that $c^{\bs}_f(\bu) = c^{\bs}_g(\bu)$ for almost every $\bu \in [0,1]^d$, and let $D := \{x \in [0,1]: f(x) \neq g(x)\}$. Again using the fact that $\bigoplus_{j=1}^d \tU_j \sim \stdunif$, we have
    \[
        0 
        =
        \Prob\left(c^{\bs}_f(\bU) \neq c^{\bs}_g(\bU)
        \right)
        =
        \Prob\left(f\left(\bigoplus_{j=1}^d \tU_j\right) \neq g\left(\bigoplus_{j=1}^d \tU_j\right)\right) 
        = 
        \int_0^1 \One\{x \in D\} \dif x 
        =
        \lambda(D).
    \]
    That is, $D$ is a set of measure zero and so $f = g$ almost everywhere, which proves the result.
\end{proof}

We therefore define our class $\cC$ as the set of $d$-dimensional copulas which have a density in $\{c^{\bs}_{f}: f \in \cF_{[0, 1]}, \bs \in \{0,1\}^d\}$, and we write $C^{\bs}_{f}$ for the copula whose density is given by $c^{\bs}_f$; thus,
\[
    C_f^{\bs}(\bu) 
    :=
    \int_{[\bzero, \bu]} c_f^{\bs}(\bv) \dif \bv
    =
    \int_{[\bzero, \bu]} f\left( \bigoplus_{j=1}^d \tv_j \right) \dif \bv, \quad \bu \in [0, 1]^d.
\]
We refer to $f$ as the \emph{generator} of $C^{\bs}_{f}$ and to $\bs$ as the \emph{signature} of $C^{\bs}_{f}$. Strictly speaking, this class is over-parameterized since reflecting the generator $f$ around $1/2$ has the same effect as changing $\bs$ for $\bone - \bs$, in the sense that
\[
    c^{\bs}_f = c^{\bone - \bs}_g, \qquad \text{where} \qquad g(x):= f(1-x).
\]
This issue can be resolved by fixing one entry of $\bs$; we will return to this point in our discussion of statistical inference in \cref{sec:inference}. We will, on occasion, focus on the subclass $\{c^{\bzero}_f : f \in \cF_{[0,1]}\}$ specifically, because if $(U_1, \ldots, U_d) \sim C_f^{\bzero}$ then $(\tilde{U}_1, \ldots \tilde{U}_d) \sim C_f^{\bs}$; when we do, it will be convenient to retain the original notation of \cref{eq:ourcopulaORIG}.

\subsection{Some properties of the copulas \texorpdfstring{$C^{\bs}_{f}$}{Cfi}}\label{sub:properties}

It is interesting to note that while they can have extremely irregular densities (see \cref{sub:univariate}), the copulas $C^{\bs}_{f}$ enjoy a number of attractive properties. Several related to the signature are immediate consequences of \cref{eq:ourcopula}. For example, if $\bU \sim C^{\bs}_{f}$ and $\bs \in \{\bzero, \bone\}$, then $\bU$ is exchangeable. If $\bs \neq \bzero$, then $\bU$ can be recovered from $\bV \sim C^{\bzero}_f$ by defining $U_j = V_j^{1-s_j} (1-V_j)^{s_j}$ for $j \in \{1,\ldots,d\}$. Moreover, the relation $\sim$ on $\cC$ defined by $C^{\bs_1}_f \sim C^{\bs_2}_g$ if and only if $\bs_1 = \bs_2$ is an equivalence relation that partitions $\cC$ into equivalence classes of the form $\{C^{\bs}_f: f \in \cF_{[0,1]} \}$ for each $\bs \in \{0,1\}^d$, and by Tonelli's theorem each equivalence class is closed under arbitrary mixtures.

\subsubsection{Distributional properties}

We now list more important properties of the copulas as a series of propositions. The first involves stochastic representations which characterize our copulas and demonstrates the power of a misspecified signature, which will play important roles in sampling and the model selection procedure proposed in \cref{sec:inference}.

\begin{prop}\label{prop:basicproperties}
    
    If $\bU \sim C^{\bs}_{f}$ for some $f \in \cF_{[0, 1]}$ and $\bs \in \{0,1\}^d$, then the following hold:
    \begin{enumerate}
        \item \label{res:fandPi}For each $j \in \{1, \dots, d\}$,\begin{equation}\label{eq:oplusf}
        \bigoplus_{k=1}^d \tU_k \sim f, \quad \bU_{-j} \sim \Pi,
        \end{equation}
        and moreover $\bigoplus_{k=1}^d \tU_k$ and $\bU_{-j}$ are independent. As a consequence, $\bigoplus_{k=1}^d \tU_k \ \big|\ \bU_{-j} \sim f$.

        \item \label{res:wrongsig} For any $\bt \not \in \{\bs, \bone - \bs\}$,
        \begin{equation}\label{eq:wrongsig}
            \bigoplus_{j=1}^d U_j^{1-t_j} (1-U_j)^{t_j} \sim \stdunif.
        \end{equation}
    \end{enumerate}
    
\end{prop}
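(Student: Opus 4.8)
The plan is to reduce both parts to elementary integration, exploiting two features of the way $c^{\bs}_f$ is built. First, each map $u\mapsto u^{1-s}(1-u)^{s}$ is either the identity or the reflection $u\mapsto 1-u$, hence a bijection of $[0,1]$ with unit Jacobian; consequently the change of variables $u_j\mapsto\tu_j$, performed in all coordinates at once, has unit Jacobian and carries any integral against $c^{\bs}_f(\bu)=f\big(\bigoplus_k\tu_k\big)$ over to the corresponding integral, over $[0,1]^d$ in the variables $\tu_1,\dots,\tu_d$, against $f\big(\bigoplus_k\tu_k\big)$ --- equivalently, it reduces everything to a draw $(\tU_1,\dots,\tU_d)\sim C^{\bzero}_f$. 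Second, for fixed values of all but one of its summands the wrapped sum acts on the remaining coordinate as a circle translation (possibly precomposed with a reflection), so for any integrable periodic $g$ and any $j$ one has $\int_0^1 g\big(\bigoplus_k\tu_k\big)\,\dif\tu_j=\int_0^1 g(w)\,\dif w$.

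For \eqref{eq:oplusf} I would compute the joint law of $\big(\bigoplus_{k=1}^d\tU_k,\,\bU_{-j}\big)$ directly. For a bounded test function $h$, I would write $\E\big[h\big(\bigoplus_k\tU_k,\bU_{-j}\big)\big]$ as the integral of $h\big(\bigoplus_k\tu_k,\bu_{-j}\big)\,f\big(\bigoplus_k\tu_k\big)$ in the $\tu$-variables and integrate over $\tu_j$ first; the second observation above collapses the inner integral to $\int_0^1 h(w,\bu_{-j})\,f(w)\,\dif w$, which no longer depends on the other $\tu_k$'s. Hence $\big(\bigoplus_k\tU_k,\bU_{-j}\big)$ has density $(w,\bu_{-j})\mapsto f(w)$ on $[0,1]^d$, and this product form yields simultaneously $\bigoplus_k\tU_k\sim f$, $\bU_{-j}\sim\Pi$, and their independence; the concluding statement $\bigoplus_k\tU_k\mid\bU_{-j}\sim f$ then follows from independence. (One could instead imitate the characteristic-function argument in the proof of \cref{lem:Lebesguepushforward}, but the one-variable substitution is the most transparent route.)

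For \eqref{eq:wrongsig} the first step is bookkeeping: $U_j^{1-t_j}(1-U_j)^{t_j}$ equals $\tU_j$ when $t_j=s_j$ and $1-\tU_j$ when $t_j\ne s_j$, so, setting $J:=\{j:t_j\ne s_j\}$ and using $|J|\in\Z$, the quantity in \eqref{eq:wrongsig} equals $V:=\big(\sum_{j\notin J}\tU_j-\sum_{j\in J}\tU_j\big)\bmod 1$; the hypothesis $\bt\notin\{\bs,\bone-\bs\}$ says exactly that $J$ is nonempty and a proper subset of $\{1,\dots,d\}$, and both facts will be used. I would then show that the characteristic function of the $\T$-valued variable $V$ satisfies $\E\big[e^{2\pi\imi mV}\big]=\One\{m=0\}$ for every $m\in\Z$, which identifies the law of $V$ as $\stdunif$ by the uniqueness theorem for Fourier--Stieltjes transforms on $\T$ used in the proof of \cref{lem:Lebesguepushforward}. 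Passing to the $\tu$-variables turns $\E\big[e^{2\pi\imi mV}\big]$ into the integral over $[0,1]^d$ of $\prod_{j\notin J}e^{2\pi\imi m\tu_j}\prod_{j\in J}e^{-2\pi\imi m\tu_j}$ against $f\big(\bigoplus_k\tu_k\big)$; choosing some $p\in J$ and integrating over $\tu_p$ first via $w=\bigoplus_k\tu_k$ extracts the factor $\int_0^1 e^{-2\pi\imi mw}f(w)\,\dif w$ (the $m$-th Fourier coefficient of $f$, which equals $1$ at $m=0$) and replaces $e^{-2\pi\imi m\tu_p}$ by $\prod_{k\ne p}e^{2\pi\imi m\tu_k}$. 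After multiplying this back in, every index of $J\setminus\{p\}$ contributes a factor $1$ while every index $j\notin J$ contributes $e^{4\pi\imi m\tu_j}$, whose integral over $[0,1]$ equals $\One\{2m=0\}=\One\{m=0\}$; since $J$ is proper there is at least one such index, so $\E\big[e^{2\pi\imi mV}\big]=\One\{m=0\}$. The interchanges of integration are legitimate by Fubini, the integrand being bounded in modulus by $f\big(\bigoplus_k\tu_k\big)\in L^1([0,1]^d)$.

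The main obstacle is the mixed signs in \eqref{eq:wrongsig}: because $V$ is a wrapped sum containing both $+$ and $-$ coordinates, \cref{lem:Lebesguepushforward} cannot be applied directly --- it would require $(\tU_1,\dots,\tU_d)$ to be uniform on $[0,1]^d$, whereas here that vector is distributed as $C^{\bzero}_f$, which is uniform only if $f\equiv1$. The device that makes the computation go through is to spend one coordinate of $J$ to absorb all of the $f$-dependence (the substitution producing the Fourier coefficient) and then let a coordinate \emph{outside} $J$, which re-emerges with the doubled frequency $2m$, annihilate every $m\ne0$. Keeping the signs and indices straight --- and verifying that $\bt\notin\{\bs,\bone-\bs\}$ is precisely the condition furnishing both a coordinate inside $J$ and a coordinate outside it --- is the only genuinely delicate point; the measure-theoretic steps (the unit-Jacobian substitutions, the circle translations, and Fubini) are routine because $f\in L^1([0,1])$ and all the exponential factors have modulus one.
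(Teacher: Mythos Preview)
Your proof is correct. Part~\ref{res:fandPi} is essentially the same as the paper's argument: both compute the joint density of $\big(\bigoplus_k\tU_k,\bU_{-j}\big)$ via a change of variables, with the paper writing out the explicit inverse of the bijection $\bu\mapsto(\bu_{-j},\bigoplus_k\tu_k)$ while you phrase the same computation through test functions and a one-variable circle translation.

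Part~\ref{res:wrongsig} is where the routes diverge. The paper leverages Part~\ref{res:fandPi} directly: writing $R^{\bt}=R^{\bs}\oplus\kappa_l U_l\oplus\sum_{j\ne k,l}\kappa_j U_j$ with $\kappa_j=(-1)^{t_j}-(-1)^{s_j}$, it picks $k$ with $t_k=s_k$ and $l$ with $t_l\ne s_l$, observes that the three summands are independent by Part~\ref{res:fandPi} (since $\bU_{-k}\sim\Pi$ independently of $R^{\bs}$), and then applies \cref{lem:Lebesguepushforward} to $\kappa_l U_l$ conditionally. Your approach is a self-contained Fourier computation: you substitute one coordinate $p\in J$ for the wrapped sum to extract the Fourier coefficient $\hat f(m)$, and then the re-emerging phase $e^{2\pi\imi m\sum_{k\ne p}\tu_k}$ combines with the original exponentials so that each $j\notin J$ contributes $\int_0^1 e^{4\pi\imi m\tu_j}\dif\tu_j=\One\{m=0\}$. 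The paper's argument is shorter and makes the dependence on Part~\ref{res:fandPi} explicit; yours is more elementary in that it does not require the independence statement as an input, and it makes transparent exactly where the two halves of the hypothesis $\bt\notin\{\bs,\bone-\bs\}$ are spent (one index inside $J$ to absorb $f$, one outside $J$ to kill nonzero frequencies).
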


\begin{proof}

    We prove the two parts separately.
    
    \noindent\textbf{Proof of Part \ref{res:fandPi}.} Let $\cV = \{\bu \in [0,1]^d: \bigoplus_{j=1}^d \tu_j \neq 0\}$, and let $\xi^{\bs}_j(\bu) := (\bu_{-j}, \bigoplus_{k=1}^d \tu_k)$. The mapping $\xi^{\bs}_j$ is a smooth bijection from $\cV$ to itself, on which the determinant of its Jacobian is trivially equal to $(-1)^{s_j}$, and its inverse is given by
    \[
        \bv \mapsto \left(\bv_{-j}, (-1)^{s_j} \left(v_j - \sum_{k \neq j} \tv_k \right) \bmod{1} \right).
    \]
    Let $g^{\bs}_j$ be the density of $\xi^{\bs}_j(\bU)$. By the standard change-of-variables formula for vector-valued mappings, for $(\bv_{-j}, v) \in \cV$ we have
    \begin{align*}
        g^{\bs}_j(\bv_{-j}, v) 
        &= 
        c_f^{\bs}\left(\bv_{-j}, (-1)^{s_j} \left(v - \sum_{k \neq j} \tv_k \right) \bmod{1} \right) |(-1)^{s_j}|\\
        &=
        f\left( \sum_{k \neq j} \tv_k  + (-1)^{2s_j} \left(v - \sum_{k \neq j} \tv_k \right) \bmod{1} \right) \\
        &= 
        f(v).
    \end{align*}

    Since $\{\bu \in [0,1]^d: \bigoplus_{j=1}^d \tu_j = 0\}$ is a set of measure zero, the claimed distributions of $\bigoplus_{k=1}^d \tU_k$ and $\bU_{-j}$, as well as their independence, follow immediately. Note that the independence of the random vector $\bU_{-j}$ could have also been established on its own from \cref{lem:Lebesguepushforward}.

    \noindent\textbf{Proof of Part \ref{res:wrongsig}.}
    Let $R^{\bt} := \bigoplus_{j=1}^d U_j^{1-t_j}(1-U_j)^{t_j} = \bigoplus_{j=1}^d (-1)^{t_j}U_j$, and let $\kappa_j := (-1)^{t_j} - (-1)^{s_j}$ for each $j \in \{1, \ldots, d\}$. By assumption, there exists at least one $k \in \{1,\ldots,d\}$ such that $t_k = s_k$, and at least one $l \in \{1,\ldots,d\}$ such that $t_l \neq s_l$. We have that
    \[
        R^{\bt} 
        =
        R^{\bs} \oplus \sum_{j \neq k} \kappa_j U_j 
        =
        R^{\bs} \oplus \, \kappa_l U_l \, \oplus \sum_{j \neq k,l} \kappa_j U_j.
    \]
    Now, it follows from the result in Part~\ref{res:fandPi} that the three components of the wrapped sum above are independent. Moreover, each $\kappa_j \in \{-2,0,2\}$, and in particular, $\kappa_k = 0$ and $\kappa_l \in \{-2,2\}$. Thus
    \begin{equation} \label{eq:RtgivenRs}
        R^{\bt} \mid \Big\{ R^{\bs} = r, \sum_{j \neq k,l} \kappa_j U_j = u \Big\} 
        \stackrel{d}{=}
        \kappa_l V \oplus (r + u),
    \end{equation}
    where $V \sim \stdunif$, which itself follows a $\stdunif$ distribution by \cref{lem:Lebesguepushforward}. That $R^{\bt} \sim \stdunif$ then follows, since the conditional distribution in \cref{eq:RtgivenRs} does not depend on $r$ and $u$.
    \end{proof}

The stochastic representations given in Part~\ref{res:fandPi} of \cref{prop:basicproperties}  prompt a discussion of sampling. Generally speaking, one can use the inverse Rosenblatt transform (also called the \emph{conditional distribution method}) \citep{rosenblattRemarksMultivariateTransformation1952} to sample a random vector $\bU$ distributed according to a general copula $C$; however, this technique requires sampling from the conditional distribution of $U_j \mid (U_1, \ldots, U_{j-1})$ for each $j \in \{2,\ldots,d\}$. For a general copula $C$, this is typically done via the inverse probability transform method, using calculations which involve all $d$ partial derivatives of $C$ and can be extremely tedious; moreover, closed forms may not even exist.

Fortunately, Part~\ref{res:fandPi} of \cref{prop:basicproperties} implies a simple form for those conditional distributions for a copula $C^{\bs}_f$: when $j < d$, the marginal component $U_j$ is independent of $(U_1, \dots, U_{j-1})$, and $U_d \mid \bU_{-d}$ is distributed according to a rotated version of $f$. This yields a method to sample from any $C^{\bs}_f \in \cC$. The method, given by \cref{algo:sampler} below, is straightforward to implement, and its time complexity is linear in $d$; we only need to be able to sample from $f$, and no functional inverses of any kind are otherwise required.

\begin{algo}[A sampler for $C^{\bs}_f$]\label{algo:sampler} \phantom{a}
    \begin{enumerate}[(1), topsep=0pt]
    \item Sample $U_1, \ldots, U_{d-1} \iid \stdunif$.
    \item Sample $X \sim f$.
    \item \label{algo:step3} Calculate $U_d := (-1)^{s_d}\left( X - \sum_{j=1}^{d-1} \tilde{U}_j \right)\bmod{1}$. 
    \item Return $(U_1, \ldots, U_{d-1}, U_d)$.
\end{enumerate}

\end{algo}

The correctness of \cref{algo:sampler} rests upon the definition of $U_d$ in Step~\ref{algo:step3}, which is due to the fact that $\tU_d = X \, \oplus \, \bigoplus_{j=1}^{d-1} - \tU_j$ is equivalent to $X = \tU_d \, \oplus \, \bigoplus_{j=1}^{d-1} \tU_j = \bigoplus_{j=1}^d \tU_j$. This follows because the circle group $\T$ is abelian, so that by the cancellation law for groups we have $x = y \oplus z$ if and only if $y = x \oplus -z$ \citep[see, e.g.,][Section 1.1, Proposition 2]{dummitAbstractAlgebra2003}.

An immediate consequence of \cref{prop:basicproperties} is that if a random vector $\bX$ has copula $C^{\bs}_f$, then the marginal vector $\bX_{-j}$ has the independence copula for any $j \in \{1,\ldots,d\}$. That is, any subset of at most $d-1$ of the variables $X_1, \dots, X_d$ are mutually independent. As a corollary, we find that the intersection of $\cC$ and the class of Archimedean copulas is trivial, so no confusion arises from our use of the word ``generator'' and its use for Archimedean generators. 

\begin{coro}

    The only Archimedean copula in $\cC$ is $\Pi$. 

\end{coro}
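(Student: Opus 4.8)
The plan is to verify that $\Pi$ — which lies in $\cC$, namely $\Pi=C^{\bzero}_f$ with $f\equiv 1$ — is the only Archimedean copula in $\cC$, splitting the argument according to whether $d\ge 3$ or $d=2$, since the two regimes call for different classical facts about Archimedean copulas.

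For $d\ge 3$ I would use that every bivariate margin of a $d$-dimensional Archimedean copula is again Archimedean with the same generator. Fix distinct $j,k$ and a third index $l$ (which exists since $d\ge 3$); by Part~\ref{res:fandPi} of \cref{prop:basicproperties} the subvector $\bU_{-l}$ of $\bU\sim C$ has the independence copula, so the pair $(U_j,U_k)$ is distributed as $\Pi$ on $[0,1]^2$. Hence, if $C\in\cC$ is Archimedean with generator $\varphi$, then $\varphi(uv)=\varphi(u)+\varphi(v)$ for all $u,v\in(0,1]$; setting $\phi(x):=\varphi(e^{-x})$ reduces this to Cauchy's equation $\phi(x+y)=\phi(x)+\phi(y)$ for the continuous function $\phi$, forcing $\varphi(t)=-\lambda\log t$ for some $\lambda>0$, and therefore $C(\bu)=\varphi^{-1}\bigl(\sum_{j=1}^d\varphi(u_j)\bigr)=\prod_{j=1}^d u_j=\Pi(\bu)$.

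For $d=2$ no margin is available to exploit, so I would instead use that every bivariate Archimedean copula is associative, $C(C(u,v),w)=C(u,C(v,w))$. First reduce to signature $\bs=\bzero$: the case $\bs=\bone$ follows from $c^{\bone}_f=c^{\bzero}_g$ with $g(x):=f(1-x)$, and when $\bs\in\{(0,1),(1,0)\}$ the copula $C^{\bs}_f$ fails to be exchangeable — hence cannot be Archimedean — unless $f(x)=f(1-x)$, in which case the computation below applies after the obvious sign changes. Writing $C:=C^{\bzero}_f$, a short boundary computation gives $\partial_2 C(a,1)=F(a)$, so differentiating the associativity identity in $w$ at $w=1$ yields $F(C(u,v))=F(v)\,\partial_2 C(u,v)$; differentiating once more in $u$ gives
\[
    f\bigl(C(u,v)\bigr)\int_0^v f\bigl((u+t)\bmod 1\bigr)\,\dif t = F(v)\,f\bigl((u+v)\bmod 1\bigr),
\]
and letting $u\to 0$ (using $C(0,v)=0$ and $L^1$-continuity of translation for the integral) leaves $f(0)\,F(v)=F(v)\,f(v)$. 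Since $F(v)>0$ for $v>0$ (an initial segment on which $f$ vanishes would force $f\equiv 0$), this makes $f\equiv f(0)$ a.e., and $\int_0^1 f=1$ then forces $f\equiv 1$, i.e.\ $C=\Pi$.

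I expect the main obstacle to be the $d=2$ case, and within it the regularity bookkeeping: an Archimedean copula need not possess a density at all, and even when — as here — it does, one must justify the pointwise differentiations and the limit $u\to 0$ of $f(C(u,v))$. This is immediate when the underlying generator is twice continuously differentiable with $\varphi''>0$, which makes $c^{\bzero}_f$ and hence $f$ continuous; the general case is reached either by testing the identity $F(C(u,v))=F(v)\,\partial_2 C(u,v)$ against rectangles and passing to the limit using only $f\in L^1$, or by a smoothing argument on the generator. I would also carefully check the boundary identity $\partial_2 C^{\bzero}_f(a,1)=F(a)$ by splitting $\int_0^a f((s+w)\bmod 1)\,\dif s$ at $s=1-w$ before letting $w\to 1$.
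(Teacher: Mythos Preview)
Your $d\ge 3$ argument coincides with the paper's: both use that the lower-dimensional margins of a copula in $\cC$ are independence (the paper invokes the full $(d-1)$-dimensional margin rather than a bivariate one) to force the Archimedean inverse generator to satisfy Cauchy's logarithmic equation. You are right that this route is vacuous for $d=2$, since the only available margin is one-dimensional; the paper does not treat $d=2$ separately, so in this respect your proposal is actually more careful than the paper.

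Your $d=2$ associativity argument, however, has genuine gaps beyond the acknowledged regularity bookkeeping. The derivation of $F(C(u,v))=F(v)\,\partial_2 C(u,v)$ is sound --- it needs only continuity of $\partial_2 C$, supplied by \cref{prop:pd} --- but differentiating once more in $u$ and then letting $u\to 0$ require $f$ to be continuous at $C(u,v)$ and at $0$; the ``testing against rectangles'' and ``smoothing'' remedies you mention are not worked out, and they are not routine since an absolutely continuous Archimedean copula need not have a continuous density. The parenthetical ``an initial segment on which $f$ vanishes would force $f\equiv 0$'' is also false as a standalone claim about $f\in\cF_{[0,1]}$ (take $f=2\cdot\One_{[1/2,1]}$); if you intend it to follow from the already-derived identity together with $f(0)=0$, that step should be made explicit. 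Finally, the signature reduction for $\bs\in\{(0,1),(1,0)\}$ with symmetric $f$ is incomplete: $c^{(0,1)}_f(u,v)=f((u-v)\bmod 1)$ is not of the form $c^{\bzero}_g$, so the ``obvious sign changes'' must be spelled out. A cleaner route from your valid identity avoids the second differentiation: by exchangeability you also have $F(C(u,v))=F(u)\,\partial_1 C(u,v)$, and subtracting (using $\partial_j C_f(\bu)=w(\bu)-F(u_j)$ from \cref{eq:partialjC}) gives $[F(u)-F(v)]\,[F(u+v)-F(u)-F(v)]=0$ on $\{u+v<1\}$, which is Cauchy's additive equation wherever $F(u)\ne F(v)$; the remaining work is to rule out flat pieces of $F$, which still needs an argument but no longer relies on pointwise values of $f$.
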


\begin{proof}
Suppose $\bU \sim C_f^{\bs}$ has an Archimedean copula, so that $C^{\bs}_f(\bu) = \psi\left( \sum_{j=1}^d \psi^{-1}(u_j)\right)$ for some Archimedean generator $\psi$. Such a function has a continuous, decreasing inverse on $[0,1]$ which satisfies $\psi^{-1}(1) =  0$. For any $j \in \{1,\ldots,d\}$, the distribution of the marginal vector $\bU_{-j}$ is given by 
\[
    C^{\bs}_f(u_1,\ldots,u_{j-1},1,u_{j+1},\ldots, u_d) 
    = 
    \psi\left( \sum_{k \neq j} \psi^{-1}(u_k)\right) = \prod_{k \neq j} u_k,
\]
where the second equality follows from Part~\ref{res:fandPi} of \cref{prop:basicproperties}. Thus $\sum_{k \neq j} \psi^{-1}(u_k) = \psi^{-1}\left( \prod_{k \neq j} u_k\right)$, so that $\psi^{-1}$ satisfies Cauchy's logarithmic functional equation, the continuous and decreasing solutions of which are of the form $\psi^{-1}(t) = -c \log(t)$ for $c > 0$ \citep{aczelLecturesFunctionalEquations1966}. Thus $\psi(t) = \exp(-t/c)$, and it follows that $C^{\bs}_f(\bu) = \prod_{j=1}^d u_j$. That is, $C^{\bs}_f = \Pi$.
\end{proof}

Characteristic functions provide insight into the effect of the generator $f$ on the construction of $C_f^{\bs}$ under the modest assumption that $f \in L^p([0,1])$ for some $p > 1$. It is not hard to derive the characteristic function $\varphi_{C^{\bs}_f}$ of $C^{\bs}_f$ in terms of the characteristic function $\varphi_f$ of $f$:

\begin{prop}\label{prop:characteristicfx}
    Let $\bU \sim C^{\bs}_f$. If $f \in L^p([0,1])$ for some $p > 1$, then
    \begin{equation}\label{eq:chfunction}
    \varphi_{\bU}(\bt) 
    =
    \sum_{k = -\infty}^\infty \varphi_f(-2\pi k)  \prod_{j=1}^d \varphi_1(t_j + (-1)^{s_j}2\pi k),
    \end{equation}
    where $\varphi_1(t) := (e^{\imi t} - 1)/\imi t$ is the characteristic function of the $\stdunif$ distribution.
\end{prop}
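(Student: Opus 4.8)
The plan is to expand the generator $f$ in its Fourier series and integrate term by term. Since $\bU$ has density $c_f^{\bs}(\bu) = f\bigl(\bigoplus_{j=1}^d \tu_j\bigr)$ and $|e^{\imi \bt^\top \bu}| = 1$, we have $\varphi_{\bU}(\bt) = \int_{[0,1]^d} e^{\imi \bt^\top \bu}\, f\bigl(\bigoplus_{j=1}^d \tu_j\bigr)\dif\bu$, which is finite. Writing $\hat f_k := \int_0^1 f(x) e^{-2\pi\imi kx}\dif x$ for the $k$-th Fourier coefficient of $f$, observe that $\hat f_k = \varphi_f(-2\pi k)$ by definition of $\varphi_f$, so \cref{eq:chfunction} is precisely the assertion that the integral above equals $\sum_{k\in\Z}\hat f_k\prod_{j=1}^d\varphi_1\bigl(t_j + (-1)^{s_j}2\pi k\bigr)$.

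First I would treat the trigonometric polynomial $S_N(x) := \sum_{|k|\le N}\hat f_k e^{2\pi\imi kx}$. The key algebraic observation is that for every $k\in\Z$ and $s\in\{0,1\}$ one has $e^{2\pi\imi k\,u^{1-s}(1-u)^s} = e^{(-1)^s 2\pi\imi k u}$ (both cases being immediate, since $e^{\pm 2\pi\imi k} = 1$), and hence $e^{2\pi\imi k(\bigoplus_{j}\tu_j)} = e^{2\pi\imi k\sum_j \tu_j} = \prod_{j=1}^d e^{(-1)^{s_j}2\pi\imi k u_j}$, the wrapped and ordinary sums differing by an integer. Substituting $S_N$ for $f$ and applying Fubini to the resulting finite sum, the integral factorizes over coordinates:
\[
    \int_{[0,1]^d} e^{\imi\bt^\top\bu}\, S_N\Bigl(\bigoplus_{j=1}^d\tu_j\Bigr)\dif\bu
    =
    \sum_{|k|\le N}\hat f_k\prod_{j=1}^d\int_0^1 e^{\imi(t_j + (-1)^{s_j}2\pi k)u_j}\dif u_j
    =
    \sum_{|k|\le N}\hat f_k\prod_{j=1}^d\varphi_1\bigl(t_j + (-1)^{s_j}2\pi k\bigr),
\]
using $\int_0^1 e^{\imi a u}\dif u = (e^{\imi a}-1)/(\imi a) = \varphi_1(a)$ (equal to $1$ at $a=0$). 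This is the $N$-th partial sum of the target series.

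It remains to let $N\to\infty$. Bounding $|e^{\imi\bt^\top\bu}|\le 1$, the difference between $\int e^{\imi\bt^\top\bu}c_f^{\bs}(\bu)\dif\bu$ and its $S_N$-analogue is at most $\int_{[0,1]^d}\bigl|S_N(\bigoplus_j\tu_j) - f(\bigoplus_j\tu_j)\bigr|\dif\bu$. Applying \cref{lem:Lebesguepushforward} to the full-row-rank matrix $\Ab = ((-1)^{s_1},\dots,(-1)^{s_d})\in\Z^{1\times d}$ gives $\bigoplus_{j=1}^d\tU_j\sim\stdunif$ when $\bU\sim\mathrm{Unif}([0,1]^d)$, so this integral equals $\int_0^1|S_N(x) - f(x)|\dif x = \|S_N - f\|_{L^1([0,1])}\le\|S_N - f\|_{L^p([0,1])}$. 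The only genuine analytic ingredient is that $\|S_N - f\|_p\to 0$: this is the $L^p$-convergence of Fourier partial sums for $1<p<\infty$ (equivalently, M. Riesz's theorem on the boundedness of the partial-sum operators on $L^p$), and it is exactly here that the hypothesis $p>1$ is needed, since convergence can fail for $p=1$. (Alternatively, Carleson–Hunt yields almost-everywhere convergence of $S_N$ to $f$ and one concludes by dominated convergence, but the norm bound is cleaner.) I expect this Fourier-analytic step to be the only point requiring care; the rest is bookkeeping, and one can note as a sanity check that the limiting series converges absolutely, since $|\varphi_1(t_j + (-1)^{s_j}2\pi k)| = O(|k|^{-1})$ and $d\ge 2$.
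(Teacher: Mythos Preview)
Your proposal is correct and follows essentially the same route as the paper: expand $f$ in its Fourier series, compute the integral explicitly for the partial sum $S_N$ (where the exponential factorizes over coordinates), and pass to the limit using $L^p$-convergence of Fourier partial sums combined with \cref{lem:Lebesguepushforward} to reduce the $d$-dimensional error integral to the one-dimensional $\|S_N - f\|$. The only cosmetic difference is that the paper applies H\"older's inequality to reach the $L^p$ norm directly, whereas you bound by the $L^1$ norm first and then use $\|\cdot\|_1 \le \|\cdot\|_p$ on $[0,1]$; both are fine.
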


\begin{rem}
    \cref{eq:chfunction} decomposes the dependence structure of $C_f$ into a superposition of simpler, separable components. The foundational component, corresponding to $k = 0$, is precisely the characteristic function of $\Pi$, and its weight is $\varphi_f(0) = 1$. The remaining terms (i.e., those with $k \neq 0$) induce dependence by coupling the marginals $U_1, \ldots, U_d$ through shared harmonic shifts, with the generator $f$ determining the strength of each coupling mode via its sequence of Fourier coefficients $\{\varphi_f(-2\pi k)\}$. Moreover, it is easy to see that $|\varphi_f(-2\pi k)| < 1$ for $k \neq 0$, which shows that dependence within $C^{\bs}_f$ necessarily arises from the full Fourier expansion of $f$. In particular, no single term $\varphi_f(-2\pi k)  \prod_{j=1}^d \varphi_1(t_j + (-1)^{s_j}2\pi k)$ can by itself reproduce the copula structure unless $k = 0$ and $C^{\bs}_f = \Pi$. Thus, the analytic properties of the generator are closely linked to the nature of the dependence: a smooth generator whose Fourier coefficients decay rapidly produces simple dependence structures dominated by the independence component, while a generator with more slowly decaying coefficients gives rise to a richer and more complex dependence by incorporating more harmonic interactions.
\end{rem}

\begin{proof}[Proof of \cref{prop:characteristicfx}]
    Write $c_k := \varphi_f(-2\pi k) = \int_0^1 e^{-2\pi \imi k x} f(x) \dif x$ and $S_n[f](x) := \sum_{k=-n}^n c_k e^{2 \pi \imi k x}$ for the $n$th partial sum of the Fourier series of $f$. It is a standard result in harmonic analysis that as $n \to \infty$, $S_n[f] \to f$ in $L^p([0,1])$ \citep[see, e.g.,][Chapter 2, Section 1]{katznelsonIntroductionHarmonicAnalysis2004}. Put $G_n(x) := S_n[f](x) -f(x)$. For any fixed $\bt \in \R^d$, H\"older's inequality gives
    \begin{equation*}\label{eq:CFbound}
        \left|\int_{[0,1]^d} e^{\imi \bt^\top \bu} G_n\Big( \bigoplus_{j=1}^d \tu_j \Big) \dif \bu  \right|
        \leq
        \bigg( \int_{[0,1]^d} \Big|G_n\Big( \bigoplus_{j=1}^d \tu_j \Big)\Big|^p \dif \bu \bigg)^{1/p} \bigg( \int_{[0,1]^d} \Big|e^{\imi \bt^\top \bu}\Big|^{\tfrac{p}{p-1}} \dif \bu \bigg)^{\tfrac{p-1}{p}}.
    \end{equation*}
    Since $|e^{\imi x}| = 1$ for any $x \in \R$, the second term on the right hand side in the above display is equal to $1$, leaving only the first term. Given $V, V_1, \ldots, V_d \iid \stdunif$, this term can be rewritten as
    \[
        \E\bigg[ \Big| G_n\Big( \bigoplus_{j=1}^d \tV_j \Big) \Big|^p \bigg]^{1/p}
        =
        \E\left[|G_n(V)|^p\right]^{1/p}
        =
        \bigg( \int_0^1 |G_n(x)|^p \dif x \bigg)^{1/p}
        =
        ||S_n[f] - f||_p
        \too
        0
    \]
    as $n \to \infty$, where the equality in law of $\bigoplus_{j=1}^d \tV_j$ and $V$ is due to \cref{lem:Lebesguepushforward}. We therefore have
    \begin{equation*}\label{eq:Fourierseries}
        \varphi_{\bU}(\bt) 
        =
        \lim_{n \to \infty} \int_{[0,1]^d} e^{\imi \bt^\top \bu} S_n[f] \left( \bigoplus_{j=1}^d \tu_j \right) \dif \bu 
        =
        \lim_{n \to \infty} \sum_{k = -n}^n c_k \prod_{j=1}^d \int_0^1 e^{(\imi t_j + (-1)^{s_j} 2 \pi \imi k) u_j} \dif u_j,
    \end{equation*}
    which is equivalent to \cref{eq:chfunction}.
\end{proof}

\subsubsection{Measures of concordance}

We now examine three measures of concordance for bivariate copulas, beginning with Spearman's rho. First we show that by choosing an appropriate generator $f$ and signature $\bs$, we can attain any value of Spearman's rho within $(-1,1)$, and hence the family $\cC$ is comprehensive. Note that any measure of concordance $\kappa_{(U,V)}$ --- of which Spearman's rho and Kendall's tau are famous examples --- satisfies $\kappa_{(U,V)} = -\kappa_{(U,1-V)}$ \citep{scarsini1984measures}, so by the discussion in \cref{sub:construction} it suffices to treat the $\bs = \bzero$ case in the proofs of the following results.

\begin{prop}\label{prop:spearman}
    
    If $d=2$, then for every $f \in \cF_{[0, 1]}$ and $\bs \in \{0,1\}^2$, the copula $C^{\bs}_f$ has a Spearman's rho of
    \[
        \rho_{C^{\bs}_f} = (-1)^{s_1 + s_2}\left(6\EE\left[X(1-X)\right] - 1\right),
    \]
    where $X \sim f$. In particular, for a fixed signature $\bs$, the range of Spearman's rho is given by
    \[
        \{ \rho_{C^{\bs}_f} : f \in \cF_{[0,1]} \} =
        \begin{cases}
            (-1, 1/2), \quad &s_1 = s_2
            \\
            (-1/2, 1), \quad &s_1 \neq s_2
        \end{cases}.
    \]
    
\end{prop}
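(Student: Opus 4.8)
The plan is to first derive the closed-form expression for $\rho_{C^{\bs}_f}$ and then read off the range over $f \in \cF_{[0,1]}$. As noted just before the statement, any measure of concordance satisfies $\kappa_{(U,V)} = -\kappa_{(U,1-V)}$, and $C^{\bs}_f$ is obtained from $C^{\bzero}_f$ by reflecting the axes indexed by $\{j : s_j = 1\}$; applying this identity once per reflected axis reduces everything to the $\bs = \bzero$ case and produces the overall factor $(-1)^{s_1 + s_2}$ (reflecting either single axis flips the sign, reflecting both restores it). So the substantive task is to compute $\rho_{C^{\bzero}_f}$.

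For this I would use the stochastic representation in Part~\ref{res:fandPi} of \cref{prop:basicproperties} (equivalently, \cref{algo:sampler} with $d = 2$ and $\bs = \bzero$): if $(U_1, U_2) \sim C^{\bzero}_f$, we may realize it as $U_1 \sim \stdunif$ and $X \sim f$ independent, with $U_2 = (X - U_1) \bmod 1$. Since the marginals of a copula are uniform, it is standard that Spearman's rho equals the ordinary correlation, i.e.\ $\rho_{C^{\bzero}_f} = 12\,\EE[U_1 U_2] - 3$. Conditioning on $X = x$ and splitting the integral over $U_1 \in [0,1]$ at the point $u = x$ --- where the wrapped difference $(x - u)\bmod 1$ switches from $x - u$ to $x - u + 1$ --- an elementary computation gives $\EE[U_1 U_2 \mid X = x] = \tfrac16\bigl(1 + 3x - 3x^2\bigr) = \tfrac16 + \tfrac12 x(1-x)$. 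Taking the expectation over $X$ yields $\EE[U_1 U_2] = \tfrac16 + \tfrac12\,\EE[X(1-X)]$, whence $\rho_{C^{\bzero}_f} = 6\,\EE[X(1-X)] - 1$, and the general formula follows from the reflection argument above.

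For the range, I would first observe that $x \mapsto x(1-x)$ maps $[0,1]$ into $[0, \tfrac14]$, with $x(1-x) = 0$ only on the Lebesgue-null set $\{0,1\}$ and $\tfrac14 - x(1-x) = (x - \tfrac12)^2 = 0$ only at the null point $\tfrac12$. Since any $X \sim f$ with $f \in \cF_{[0,1]}$ is absolutely continuous, both exceptional sets have probability zero, so $\EE[X(1-X)] \in (0, \tfrac14)$ with strict inequalities. To see that every value in $(0,\tfrac14)$ is attained, fix $c \in (0, \tfrac14)$ and exploit that $f \mapsto \EE_{X \sim f}[X(1-X)]$ is an affine functional: choose a density $f_0$ concentrated near $0$ (say uniform on $(0,\eps)$) whose value lies below $c$, and a density $f_1$ concentrated near $\tfrac12$ (say uniform on $(\tfrac12 - \delta, \tfrac12 + \delta)$) whose value exceeds $c$; then the mixtures $\lambda f_1 + (1-\lambda) f_0$ for $\lambda \in [0,1]$ sweep out a closed interval of values containing $c$. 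Hence $\{6\,\EE[X(1-X)] - 1 : f \in \cF_{[0,1]}\} = (-1, \tfrac12)$, which is the $s_1 = s_2$ case, and multiplying by $-1$ gives $(-\tfrac12, 1)$ when $s_1 \neq s_2$.

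I expect the only mildly delicate point to be the surjectivity onto the open interval $(0, \tfrac14)$ --- the formula itself collapses to a single routine integral --- but the affineness of the functional together with the explicit near-$0$ and near-$\tfrac12$ densities settles it without difficulty.
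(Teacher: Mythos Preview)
Your proof is correct and reaches the same formula, but the route to $\EE[U_1 U_2]$ differs from the paper's. The paper works directly with the density: it writes $\int_{[0,1]^2} u_1 u_2\, f(u_1 \oplus u_2)\, \dif u_2 \dif u_1$, fixes $u_1$, splits the inner integral at $u_2 = 1 - u_1$, and after a change of variable and an integration-by-parts identity (their \cref{eq:darthvader}) arrives at $\tfrac12\EE[X] - \tfrac12\EE[X^2] + \tfrac16$. You instead invoke the stochastic representation $U_2 = (X - U_1) \bmod 1$ with $U_1 \perp X$, condition on $X = x$, and split at $u = x$ to get $\EE[U_1 U_2 \mid X = x] = \tfrac16 + \tfrac12 x(1-x)$ directly. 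Your approach is a little more economical --- it exploits \cref{prop:basicproperties} and avoids the auxiliary identity --- while the paper's direct-integration style is self-contained and reusable for the later Kendall's tau and Dette--Siburg--Stoimenov computations, where the same $w(\bu)$ decomposition appears. For the range, both arguments are essentially the same (concentrate near $0$ or $1$ for the infimum, near $1/2$ for the supremum, interpolate by mixtures/linearity); your justification of the \emph{strict} inequalities via absolute continuity is slightly crisper than the paper's.
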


\begin{proof}
    According to Theorem 5.1.6 in \citet{nelsenIntroductionCopulas2006},
    \begin{equation} \label{eq:rho}
        \rho_{C_f} = 12 \int_{[0, 1]^2} u_1 u_2 c_f(\bu) \dif \bu - 3.
    \end{equation}
    The integral in \cref{eq:rho} can be written as
    \begin{align}
        \int_0^1 \int_0^1 u_1 u_2 f(u_1 \oplus u_2) \dif u_2 \dif u_1 
        &= \int_0^1 u_1 \left\{ \int_0^{1-u_1} u_2 f(u_1 + u_2) \dif u_2 + \int_{1-u_1}^1 u_2 f(u_1 + u_2 - 1) \dif u_2 \right\} \dif u_1 \nonumber
        \\
        &= \int_0^1 u_1 \left\{ \int_{u_1}^1 (u_2 - u_1) f(u_2) \dif u_2 + \int_0^{u_1} (u_2 - u_1 + 1) f(u_2) \dif u_2 \right\} \dif u_1 \nonumber
        \\
        &= \int_0^1 u_1 \dif u_1 \int_0^1 u_2 f(u_2) \dif u_2 - \int_0^1 u_1^2 \dif u_1 \int_0^1 f(u_2) \dif u_2 + \int_0^1 u_1 F(u_1) \dif u_1 \nonumber
        \\
        &= \frac{1}{2} \int_0^1 u f(u) \dif u - \frac{1}{2} \int_0^1 u^2 f(u) \dif u + \frac{1}{6} \label{eq:rhointegral}
    \end{align}
    where we have used the fact that for a positive integer $p$,
    \begin{equation}\label{eq:darthvader}
        p \int_0^1 u^{p-1} (1 - F(u)) \dif u = \int_0^1 u^p f(u) \dif u.
    \end{equation}
    Inserting \cref{eq:rhointegral} into \cref{eq:rho} yields
    \[
        \rho_{C_f} = 6\int_0^1 u(1-u) f(u) \dif u - 1,
    \]
    which is equivalent to the desired expression.
    
    For the bounds, we note that $0 \leq x(1-x) \leq 1/4$ when $x \in [0,1]$, and that these bounds are attained at $x \in \{0, 1\}$ and $x=1/2$, respectively. Letting the density $f$ put all of its mass arbitrarily close to $0$ or $1$ will yield $\E\left[X(1-X)\right]$ arbitrarily small, whereas if $f$ has all of its mass arbitrarily close to $1/2$, that quantity will approach its maximum of $1/4$. By a linearity argument, allowing $f$ to approach a two-point distribution on $\{0, 1/2\}$ with suitable weights shows that the expected value can attain any value in $(0, 1/4)$. Hence $\rho_{C_f}$ can attain any value between its two bounds.
\end{proof}

We next turn to Kendall's tau.

\begin{prop}\label{prop:kendall}

    If $d = 2$, then for every $f \in \cF_{[0, 1]}$ and $\bs \in \{0,1\}^2$, the copula $C^{\bs}_f$ has a Kendall's tau of
    \begin{equation}\label{eq:tauorig}
        \tau_{C^{\bs}_f} 
        = (-1)^{s_1 + s_2} \left(4\EE\left[X(1-X)\right] + 2\EE\left[|X-X'|\right] - 4\Var(X) - 1\right),
    \end{equation}
    where $X,X' \iid f$. 
\end{prop}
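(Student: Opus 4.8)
The plan is to reduce to the baseline signature $\bs = \bzero$ and then evaluate Kendall's tau through the classical identity
\[
\tau_{C} = 1 - 4\int_{[0,1]^2}\partial_1 C(\bu)\,\partial_2 C(\bu)\,\dif\bu,
\]
valid for any absolutely continuous bivariate copula $C$ \citep[see, e.g.,][Chapter 5]{nelsenIntroductionCopulas2006}. For the reduction, recall from the discussion preceding \cref{prop:spearman} that reflecting a single axis multiplies any measure of concordance by $-1$; concretely, if $\bU \sim C^{(1,0)}_f$ then $(1-U_1, U_2) \sim C^{\bzero}_f$, and analogously for the signatures $(0,1)$ and $(1,1)$ (reflecting both axes in the last case), so that $\tau_{C^{\bs}_f} = (-1)^{s_1 + s_2}\tau_{C^{\bzero}_f}$. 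This supplies exactly the factor $(-1)^{s_1+s_2}$ in \eqref{eq:tauorig}, and it remains to prove the bracketed expression for $C_f = C^{\bzero}_f$ with $X \sim f$.

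Next I would compute the first-order partials explicitly. From $\partial_1 C_f(u_1, u_2) = \int_0^{u_2} f(u_1 \oplus v)\,\dif v$, splitting the wrapped sum at the value of $v$ where $u_1 + v$ crosses $1$ gives the piecewise form $\partial_1 C_f(u_1, u_2) = \bar F(u_1 + u_2) - F(u_1)$, where $\bar F \colon [0,2] \to [0,2]$ is the continuous ``unwrapped'' cdf $\bar F(t) := F(t)\One\{t \le 1\} + (1 + F(t-1))\One\{t > 1\}$; by exchangeability of $c_f$ one likewise has $\partial_2 C_f(u_1, u_2) = \bar F(u_1 + u_2) - F(u_2)$. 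Expanding the product gives $\partial_1 C_f\,\partial_2 C_f = \bar F(u_1+u_2)^2 - \bar F(u_1+u_2)(F(u_1) + F(u_2)) + F(u_1)F(u_2)$, and I would integrate the three terms over $[0,1]^2$ one at a time. The last contributes $\big(\int_0^1 F\big)^2 = (1 - \EE[X])^2$. For the cross term, integrating in $u_2$ first and noting that $\int_{u_1}^{u_1+1}\bar F(t)\,\dif t$ equals $(1 - \EE[X]) + u_1$ reduces it to elementary integrals of $F(u_1)$ and $u_1 F(u_1)$. For $\int_{[0,1]^2}\bar F(u_1+u_2)^2\,\dif\bu$, the substitution $t = u_1 + u_2$ converts it into a one-dimensional integral of $\bar F(t)^2$ against the triangular weight ($t$ on $[0,1]$, $2 - t$ on $[1,2]$), which after expanding $\bar F$ on its two pieces leaves only integrals of $F$, $tF$, $(1-t)F$, and $F^2$ over $[0,1]$.

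It then remains to express these one-dimensional integrals as moments of $X \sim f$. The polynomial-weighted ones are routine: integration by parts (via \eqref{eq:darthvader}) gives $\int_0^1 F = 1 - \EE[X]$ and $\int_0^1 tF = \tfrac12(1 - \EE[X^2])$. The one genuinely different ingredient is $\int_0^1 F(t)^2\,\dif t$, which I would handle via the Gini mean-difference identity $\EE|X - X'| = 2\int_0^1 F(t)(1 - F(t))\,\dif t$ for $X, X' \iid f$ — equivalently, $\int_0^1 F^2 = 1 - 2\EE[X F(X)]$ together with the symmetrization $\EE[X F(X)] = \tfrac12\EE[\max(X,X')] = \tfrac12\EE[X] + \tfrac14\EE|X - X'|$ — yielding $\int_0^1 F^2 = 1 - \EE[X] - \tfrac12\EE|X - X'|$. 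Substituting back, collecting the $\EE[X^2]$ and $\EE[X]^2$ terms into $\Var(X)$, and applying $\tau_{C_f} = 1 - 4(\cdots)$ produces the bracketed expression in \eqref{eq:tauorig}; the overall sign factor comes from the reduction step.

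The main obstacle is the appearance of $\EE|X - X'|$, which is invisible in the integrand $\partial_1 C_f\,\partial_2 C_f$ and materializes only once $\int_0^1 F(t)^2\,\dif t$ is recognized as a Gini mean difference; realizing that this is the right move — rather than, say, expanding $F$ or $\bar F$ in Fourier series in the spirit of \cref{prop:characteristicfx} — is the conceptual crux. A secondary source of friction is the bookkeeping around the modular wraparound: one must be careful deriving the piecewise forms of the partials and evaluating the integrals of $\bar F$ over the shifted window $[u_1, u_1+1]$, all elementary but easy to get wrong.
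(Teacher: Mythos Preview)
Your proposal is correct and follows essentially the same route as the paper: your unwrapped cdf $\bar F(u_1+u_2)$ is exactly the paper's $w(\bu) := F(u_1 \oplus u_2) + \One\{u_1+u_2>1\}$, and both proofs expand $\partial_1 C_f\,\partial_2 C_f = w^2 - w(F(u_1)+F(u_2)) + F(u_1)F(u_2)$ and then identify $\int_0^1 F^2 = 1 - \EE[X] - \tfrac12\EE|X-X'|$ as the source of the Gini term. The only organizational differences are that the paper packages the elementary moment identities into a small lemma on iterated antiderivatives $F^{(-m)}(1)$ and derives the $\int F^2$ identity via $\EE[F^{(-1)}(X)]$ and conditioning, whereas you use the triangular-weight substitution $t=u_1+u_2$ and invoke the Gini mean-difference identity directly.
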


\begin{rem}
    
    The expression $\EE\left[|X-X'|\right]$ is the \emph{mean absolute difference} of the distribution $F$ and is equal to the expected value of \emph{Gini's mean difference}, given by
    \[
        \frac{2}{n(n-1)} \sum_{i=1}^n \sum_{j>i} |X_i - X_j|
    \]
    for a sample $X_1, \ldots, X_n \iid f$ \citep{lomnicki1952standard}. Among several other representations, one can also write \cref{eq:tauorig} as 
    \[
        \tau_{C^{\bs}_f} 
            = 4 (-1)^{s_1 + s_2}\left( \frac{1}{2} \EE\left[|X-X'|\right] - \EE\left[(X - \EE[X])^2\right] - \EE\left[(X - 1/2)^2\right] \right),
    \]
    which is essentially a linear combination of three measures of statistical variability for the distribution $F$: its mean absolute difference, its variance, and its mean squared difference from $1/2$. A sharp upper bound for Kendall's tau is substantially more complicated to develop than that for Spearman's rho; we postpone an analysis until after the proof of \cref{prop:kendall}.

\end{rem}

The proof of \cref{prop:kendall} relies on a simple lemma related to antiderivatives of generators. We write $F^{(-m)}$ for the $(m+1)$th antiderivative of $f$, in the sense that $F^{(-m)}(x) := \int^x_0 F^{(-m+1)}(t) \dif t$, with $F^{(0)}(x) := \int_0^x f(t) \dif t = F(x)$.

\begin{lemm}\label{lemma:momentstoprimitives}
    If $f \in \cF_{[0,1]}$ and $X \sim f$, then
    \[
        F^{(-m)}(1) = \sum_{j=0}^m \frac{(-1)^j}{j! (m-j)!}\E\left[X^j\right], \quad m \in \N.
    \]
\end{lemm}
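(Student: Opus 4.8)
The plan is to reduce everything to the classical closed form for an iterated antiderivative. I would first show that
\[
    F^{(-m)}(x) = \frac{1}{m!}\int_0^x (x-t)^m f(t)\dif t, \qquad x \in [0,1],\ m \in \N,
\]
which is Cauchy's formula for repeated integration applied to the $(m+1)$-fold primitive $F^{(-m)}$ of $f$, and then specialize to $x=1$ and expand $(1-t)^m$ by the binomial theorem.

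To establish the displayed identity I would induct on $m$. The base case $m=0$ is the definition $F^{(0)}(x) = \int_0^x f(t)\dif t$. For the inductive step, use $F^{(-m)}(x) = \int_0^x F^{(-(m-1))}(s)\dif s$, substitute the inductive hypothesis $F^{(-(m-1))}(s) = \frac{1}{(m-1)!}\int_0^s (s-t)^{m-1}f(t)\dif t$, and swap the order of the two integrals over the region $\{0 \le t \le s \le x\}$; this is legitimate by Tonelli's theorem since $f \ge 0$ and $(s-t)^{m-1} \ge 0$ there. The inner integral is $\int_t^x (s-t)^{m-1}\dif s = (x-t)^m/m$, which yields $\frac{1}{m!}\int_0^x (x-t)^m f(t)\dif t$ and closes the induction. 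Equivalently, the same representation follows from $m+1$ successive integrations by parts starting from $F^{(-m)}(1) = \int_0^1 F^{(-(m-1))}$, with each boundary term vanishing because every $F^{(-k)}$ vanishes at $0$ while the antiderivatives of the polynomial weights can be chosen to vanish at $1$.

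Finally, set $x = 1$. Since $f \in \cF_{[0,1]}$ and $X \sim f$ takes values in $[0,1]$, all moments $\E[X^j]$ are finite and
\[
    F^{(-m)}(1) = \frac{1}{m!}\int_0^1 (1-t)^m f(t)\dif t = \frac{1}{m!}\,\E[(1-X)^m] = \frac{1}{m!}\sum_{j=0}^m \binom{m}{j}(-1)^j \E[X^j],
\]
and $\binom{m}{j}/m! = 1/(j!(m-j)!)$ gives exactly the claimed formula.

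There is no substantial obstacle here; the only points requiring a little care are the Tonelli interchange in the inductive step (or, in the integration-by-parts route, verifying that every boundary term vanishes) and the bookkeeping that $F^{(-m)}$, as defined by iterated integration from $0$, really is the $(m+1)$-fold primitive of $f$ to which Cauchy's formula applies. Once the integral representation is in place, the result is a one-line binomial expansion.
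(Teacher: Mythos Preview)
Your proposal is correct and follows essentially the same route as the paper: both invoke Cauchy's formula for repeated integration, $F^{(-m)}(x) = \frac{1}{m!}\int_0^x (x-t)^m f(t)\dif t$, and then expand via the binomial theorem. The only cosmetic differences are that the paper cites Cauchy's formula directly rather than proving it by induction, and it expands $(x-t)^m$ for general $x$ before specializing to $x=1$, whereas you set $x=1$ first and expand $(1-t)^m$; neither of these changes the substance.
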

\begin{proof}
From Cauchy's formula for repeated integration and the binomial theorem, we have that for any $x \in [0,1]$,
\begin{align*}
    F^{(-m)}(x) &= \frac{1}{m!} \int_0^x (x - t)^m  f(t) \dif t \\
    &= \frac{1}{m!} \int_0^x \sum_{j=0}^m \binom{m}{j} (-1)^j  t^j  x^{m-j}  f(t) \dif t \\
    &= \frac{1}{m!} \sum_{j=0}^m \binom{m}{j} (-1)^j  x^{m-j} \int_0^x t^j   f(t) \dif t \\
    &= \sum_{j=0}^m \frac{(-1)^j}{j!  (m-j)!}  x^{m-j}  \E\left[ X^j  \One\{X \leq x\}\right].
\end{align*}
Taking $x = 1$ concludes the proof.\footnote{With substantially more work, the following ``converse'' expression for $\E[X^m]$ in terms of the antiderivatives $\{F^{(-j)}(1)\}$ can also be shown: $\E[X^m] =  \sum_{j=0}^m (-1)^j m^{\underline{j}} F^{(j)}(1)$, where $m^{\underline{j}} =  m(m-1)(m-2) \cdots (m-j+1)$ is the $j$th falling factorial of $m$. See Section 2.3 of \citet{zimmermanCopulasNewTheory2025} for details and further insights into the antiderivatives $\{F^{(-j)}(1)\}$, as well as a generalization of \cref{lemma:momentstoprimitives} to any $m \in [0,\infty)$.}
\end{proof}

\begin{proof}[Proof of \cref{prop:kendall}]
We begin with the well-known identity given in Equation (5.1.12) in \citet{nelsenIntroductionCopulas2006}:
\begin{equation}\label{eq:nelsentau}
    \tau_{C_f} = 1 - 4 \int_{[0,1]^2} \partial_1 C_f(\bu) \cdot \partial_2 C_f(\bu) \dif \bu.
\end{equation}

Let $w(\bu) := F(u_1 \oplus u_2) + \One\{u_1 + u_2 > 1\}$. For $j \in \{1,2\}$, we have
\begin{align}
    \partial_j C_f(\bu) 
    &= \int_0^{u_{-j}} f(u_j \oplus v) \dif v \nonumber \\
    &= \int_{u_j}^{u_1 + u_2} f(v \bmod 1) \dif v \nonumber\\
    &= \left(\int_{u_j}^{u_1 + u_2} f(v) \dif v \right) \One\{u_1 + u_2 \leq 1\} + \left(\int_{u_j}^1 f(v) \dif v + \int_1^{u_1 + u_2} f(v-1) \dif v \right) \One\{u_1 + u_2 > 1\} \nonumber\\
    &= \left(F(u_1 + u_2) - F(u_j) \right) \One\{u_1 + u_2 \leq 1\} \nonumber \\
    &\hspace{1in}+ \left(F(1) - F(u_j) + F(u_1 + u_2 - 1) - F(0) \right) \One\{u_1 + u_2 > 1\} \nonumber\\
    &= w(\bu) - F(u_j). \label{eq:partialjC}
\end{align}
Thus
\begin{align*}
    \partial_1 C(\bu) \cdot \partial_2 C(\bu)
    &= w(\bu)^2 - w(\bu)(F(u_1) + F(u_2)) + F(u_1) F(u_2),
\end{align*}
so by symmetry, the integral in \cref{eq:nelsentau} is equal to
\begin{equation} \label{eq:intpd}
    \int_{[0,1]^2} w(\bu)^2 \dif\bu - 2 \int_{[0,1]^2} w(\bu) F(u_1) \dif\bu + F^{(-1)}(1)^2.
\end{equation}

To handle the three terms in \cref{eq:intpd}, we first note that \cref{lemma:momentstoprimitives} implies the identities
\begin{equation}\label{eq:lemma1identity1}
    F^{(-1)}(1) = 1 - \E[X]
\end{equation} and
\begin{equation}\label{eq:lemma1identity2}
    F^{(-2)}(1) = \frac{1}{2} - \E[X] + \frac{1}{2} \E[X^2].
\end{equation}

After expanding $w(\bu)^2$ in \cref{eq:intpd}, and after several changes of variables to remove wrapped sums, we first find that
\begin{align}
    \int_{[0,1]^2} w(\bu)^2 \dif\bu &= \int_0^1 \int_0^1 F(u_1 \oplus u_2)^2 \dif u_2 \dif u_1 + 2\int_0^1 \int_{1-u_1}^1 F(u_1 \oplus u_2) \dif u_2 \dif u_1 + \int_0^1 \int_{1-u_1}^1 \dif u_2 \dif u_1 \nonumber
    \\
    &= \int_0^1 \int_0^1 F(u_1)^2 \dif u_2 \dif u_1 + 2\int_0^1 \int_0^{u_1} F(u_2) \dif u_2 \dif u_1 + \int_0^1 u \dif u \nonumber
    \\
    &= \int_0^1 F(u)^2 \dif u + 2\int_0^1 F^{(-1)}(u) \dif u + \frac{1}{2}. \label{eq:intwsquared}
\end{align}
Using integration by parts, \cref{eq:lemma1identity1}, and \cref{lemma:momentstoprimitives}, the first integral in \cref{eq:intwsquared} is
\begin{align}
    \int_0^1 F(u)^2 \dif u 
    &= F^{(-1)}(1) - \int_0^1 F^{(-1)}(u) f(u) \dif u
    \nonumber\\
    &= 1 - \E[X] - \E[F^{(-1)}(X)] 
    \nonumber\\
    &= 1 - \E[X] - \left( \E[X \E[\One\{X' \leq X\} \mid X] - \E[X' \One\{X' \leq X\} \mid X]] \right)
    \nonumber\\
    &= 1 - \E[X] - \E[\E[(X - X') \One\{X' \leq X\} \mid X]]
    \nonumber\\
    &= 1 - \E[X] - \frac{1}{2} \E[|X - X'|], \label{eq:intF2}
\end{align}
while the second integral in \cref{eq:intwsquared} is by definition equal to $F^{(-2)}(1)$, which itself is equal to $1/2 - \E[X] + (1/2) \E[X^2]$ by \cref{eq:lemma1identity2}. Putting the pieces together, the first term in \cref{eq:intpd} is
\begin{equation} \label{eq:intw2}
    \int_{[0,1]^2} w(\bu)^2 \dif\bu = -\frac{1}{2} \E[|X - X'|] - 3 \E[X] + \E[X^2] + \frac{5}{2}.
\end{equation}

Via similar changes of variables and \cref{eq:lemma1identity1,eq:lemma1identity2},
\begin{align}
    \int_{[0,1]^2} w(\bu) F(u_1) \dif\bu &= \int_0^1 \int_0^1 F(u_1 \oplus u_2) F(u_1) \dif u_2 \dif u_1 + \int_0^1 \int_{1-u_2}^1 F(u_1) \dif u_1 \dif u_2 \nonumber
    \\
    &= \int_0^1 \int_0^1 F(u_1) F(u_2) \dif u_2 \dif u_1 + \int_0^1 (F^{(-1)}(1) - F^{(-1)}(1 - u)) \dif u \nonumber
    \\
    &= F^{(-1)}(1)^2 + F^{(-1)}(1) - F^{(-2)}(1) \nonumber
    \\
    &= -2\E[X] + \E[X]^2 - \frac{1}{2}\E[X^2] + \frac{3}{2} \label{eq:intwF}
\end{align}

Inserting \cref{eq:lemma1identity1,eq:intwF,eq:intw2} into \cref{eq:intpd} and rearranging, we finally conclude that
\[
    \int_{[0,1]^2} \partial_1 C_f(\bu) \cdot \partial_2 C_f(\bu) \dif \bu = \E[X(X-1)] -\frac{1}{2} \E[|X - X'|] + \Var(X) + \frac{1}{2},
\]
which by \cref{eq:nelsentau} is equivalent to the claimed result.
\end{proof}

We now directly compare the ranges of Spearman's rho and Kendall's tau for copulas of the form $C_f$, an analysis which easily extends to copulas $C^{\bs}_f$ with signatures $\bs \in \{(0,1), (1,0), \bone\}$. As in the case of Spearman's rho, the lower bound of $-1$ for Kendall's tau is attained only by the countermonotonicity copula \citep{embrechts2001modelling}. However, when $f$ puts all of its mass arbitrarily close to $1/2$, we obtain $\tau_{C_f} = 0$, which is an upper bound for generators with a single point mass. Nevertheless, $0$ is not an upper bound in general. For example, if $\{f_n\}$ is a sequence in $\cF_{[0,1]}$ that approaches a $2$-point distribution placing mass $p$ at $a \in [0,1]$ and mass $1-p$ at $b \in [0,1]$, then $\tau_{C_{f_n}}$ approaches a maximum when $p = (a+b)/2$, and maximizing over $0<a<b<1$ yields $\tau_{C_{f_n}} \to 1/8$ when $a = 3/8$ and $b=5/8$. In fact, the tightest upper bound for $\tau_{C_f}$ among all $f \in \cF_{[0,1]}$ is only slightly higher:

\begin{prop}\label{prop:tauupperbound}
    For a fixed signature $\bs$, the range of Kendall's tau is given by
    \[
        \{ \tau_{C^{\bs}_f} : f \in \cF_{[0,1]} \} =
        \begin{cases}
            (-1, 1/6], \quad &s_1 = s_2
            \\
            [-1/6, 1), \quad &s_1 \neq s_2
        \end{cases}.
    \]
\end{prop}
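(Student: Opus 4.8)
The plan is to reduce the whole statement to the case $\bs=\bzero$, turn Kendall's tau into a single $L^2$-variance, and then solve an elementary optimization. By \cref{prop:kendall} we have $\tau_{C^{\bs}_f}=(-1)^{s_1+s_2}\tau_{C^{\bzero}_f}$ for the same $f$; since $\tau_{C^{\bzero}_f}$ will be shown to range over $(-1,1/6]$ as $f$ varies over $\cF_{[0,1]}$, the two cases $s_1=s_2$ and $s_1\neq s_2$ give the ranges $(-1,1/6]$ and $-(-1,1/6]=[-1/6,1)$ respectively. So from now on fix $f\in\cF_{[0,1]}$, put $X\sim f$, $\mu=\EE[X]$, and let $\bar F=1-F$ denote the survival function; the goal is to identify $\{\tau_{C_f}:f\in\cF_{[0,1]}\}$.

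The first (and main) step is to rewrite \cref{eq:tauorig} with $\bs=\bzero$ purely in terms of $F$. Using the standard identities $\EE[X]=\int_0^1\bar F$, $\EE[X^2]=\int_0^1 2t\,\bar F(t)\,dt$ (both instances of \cref{eq:darthvader}) and $\EE[|X-X'|]=2\int_0^1\bar F(t)F(t)\,dt$, one expands $\tau_{C_f}=4\EE[X(1-X)]+2\EE[|X-X'|]-4\Var(X)-1$, collects terms, and completes the square in $\bar F$, using $\int_0^1(1-2t)^2\,dt=1/3$. This should yield the clean identity
\[
    \tau_{C_f}=\frac13-4\int_0^1\bigl(h(t)-\bar h\,\bigr)^2\,dt,\qquad h(t):=2t-F(t),\quad \bar h:=\int_0^1 h(t)\,dt=\mu,
\]
i.e. $\tau_{C_f}=\tfrac13-4\Var_\lambda(2\,\mathrm{id}-F)$, the variance being with respect to Lebesgue measure on $[0,1]$. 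Thus maximizing $\tau_{C_f}$ is exactly the problem of showing $\int_0^1(h-\bar h)^2\ge 1/24$ for every cdf $F$ on $[0,1]$.

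For that lower bound I would use only $0\le F\le 1$, evaluated on two extreme subintervals. On $[0,\mu/2]$ one has $2t-\mu\le 0$ and $F\ge 0$, so $(h(t)-\mu)^2=(2t-\mu-F(t))^2\ge(2t-\mu)^2$; on $[(1+\mu)/2,1]$ one has $2t-\mu-1>0$ and $F\le 1$, so $(h(t)-\mu)^2\ge(2t-\mu-1)^2$. Discarding the middle part and integrating the two explicit quadratics gives $\int_0^1(h-\bar h)^2\ge\frac16\bigl(\mu^3+(1-\mu)^3\bigr)\ge\frac1{24}$, the last inequality because $\mu^3+(1-\mu)^3$ is minimized over $[0,1]$ at $\mu=1/2$. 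Hence $\tau_{C_f}\le 1/6$, with equality forcing $\mu=1/2$, $F\equiv 0$ on $[0,1/4]$, $F\equiv 1$ on $[3/4,1]$, and $F(t)=2t-\tfrac12$ on $[1/4,3/4]$ — that is, $f=2\cdot\One_{[1/4,3/4]}\in\cF_{[0,1]}$, which indeed attains $\tau_{C_f}=1/6$ (checkable from the elementary formula $\tau=-(a+b-1)^2+\tfrac23(b-a)\bigl(1-(b-a)\bigr)$ for $X\sim\mathrm{Unif}[a,b]$, taken at $a=1/4,\ b=3/4$).

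It remains to see the range is exactly $(-1,1/6]$. The value $-1$ is not attained because $\tau_{C_f}>-1$ for every $f$: the only copula with $\tau=-1$ is the (singular) countermonotonicity copula \citep{embrechts2001modelling}, which is not absolutely continuous. For surjectivity onto $(-1,1/6]$, note that $\tau_{C_f}$ is a polynomial in $\EE[X],\EE[X^2],\EE[|X-X'|]$, each of which depends continuously on $f$ in $\|\cdot\|_1$; since $\cF_{[0,1]}$ is convex hence connected, the range is an interval. Its maximum is $1/6$ by the above, and taking $f_n=n\One_{[0,1/n]}$ (so $X\sim\mathrm{Unif}[0,1/n]$) gives $\tau_{C_{f_n}}\to-1$ via the uniform formula, so the infimum of the range is $-1$; therefore the range is $(-1,1/6]$. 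I expect the only real content to be deriving the identity for $\tau_{C_f}$ and the trick of bounding $\int_0^1(h-\bar h)^2$ from below using just $0\le F\le1$ on the two extreme subintervals; the signature reduction, the continuity argument, and the explicit examples are all routine bookkeeping.
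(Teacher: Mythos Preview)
Your proof is correct and takes a genuinely different route from the paper's. The paper sets up the functional $\Phi[G]$ of \cref{eq:Phifunctional}, computes its first and second Gateaux derivatives, proposes the $\mathrm{Unif}(1/4,3/4)$ cdf as a candidate, checks $\delta\Phi[G^*;h]\le 0$ for all admissible directions, and then proves global optimality via strict concavity of $\Phi$. Your approach instead collapses \cref{eq:tauorig} (via the survival-function identities for $\EE[X]$, $\EE[X^2]$, $\EE[|X-X'|]$) into the single formula
\[
\tau_{C_f}=\tfrac13-4\int_0^1\bigl(h(t)-\mu\bigr)^2\,dt,\qquad h(t)=2t-F(t),\ \mu=\EE[X],
\]
and then bounds the variance from below by the completely elementary device of using only $F\ge 0$ on $[0,\mu/2]$ and $F\le 1$ on $[(1+\mu)/2,1]$, giving $\int_0^1(h-\mu)^2\ge\tfrac16\bigl(\mu^3+(1-\mu)^3\bigr)\ge\tfrac1{24}$.

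What each buys: the paper's variational argument is more systematic --- once one has written $\tau$ as a functional of $G$, the calculus-of-variations machinery \emph{finds} the maximizer rather than requiring a guess, and the concavity computation certifies uniqueness cleanly. Your argument is shorter and needs no functional analysis at all; the identity $\tau_{C_f}=1/3-4\,\Var_\lambda(2\,\mathrm{id}-F)$ is arguably more illuminating than anything in the paper's proof, and the equality analysis (forcing $\mu=1/2$, $F\equiv 0$ on $[0,1/4]$, $F\equiv 1$ on $[3/4,1]$, and $h\equiv\mu$ on the middle third) recovers the same unique maximizer. The surjectivity onto $(-1,1/6]$ via $L^1$-continuity of the moment functionals and convexity of $\cF_{[0,1]}$ is a nice touch that the paper handles less explicitly.
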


It is interesting to note that, in contrast to Spearman's rho, the upper bound on Kendall's tau is attained by a specific copula, which is explicitly constructed in the following proof.

\begin{proof}[Proof of \cref{prop:tauupperbound}]
    To begin with, one can easily show using integration by parts \citep[see, e.g.,][]{ceroneBoundsGiniMean2005} that
    \[
        \E[|X - X'|] = 4 \E[X F(X)] - 2 \E[X] = 2 \int_0^1 x(2 F(x) - 1) f(x) \dif x.
    \]
    Define the functional
    \begin{equation}\label{eq:Phifunctional}
        \Phi[G] := 8 \int_0^1 x G(x) G'(x) \dif x - 8 \int_0^1 x^2 G'(x) \dif x + 4 \left( \int_0^1 x G'(x) \dif x \right)^2 - 1
    \end{equation}
    on the convex set of continuous and almost everywhere differentiable cdfs $G$ on $[0,1]$, so that $\Phi[F] = \tau_{C_f}$. We maximize $\Phi$ through a variational approach. To determine stationary points, perturb $G$ as $G + \eps h$, where $h$ is any test function such that for sufficiently small $\eps > 0$, $G + \eps h$ remains in the domain of $\Phi$. Such a test function must be non-constant, continuous and almost everywhere differentiable on $[0,1]$; moreover, it must satisfy $h(0) = h(1) = 0$, $h(x) \geq 0$ whenever $G(x) = 0$, and $h(x) \leq 0$ whenever $G(x) = 1$.

    First, using integration by parts, one can show that the Gateaux derivative of the first term in \cref{eq:Phifunctional} is given by
    \[
        \lim_{\epsilon \to 0} \frac{\dif}{\dif \epsilon} 8 \int_0^1 x (G(x) + \eps h(x)) (G'(x) + \eps h'(x)) \dif x = -8 \int_0^1 G(x) h(x) \dif x.
    \]
    Similarly, the second term in \cref{eq:Phifunctional} has Gateaux derivative
    \[
        \lim_{\epsilon \to 0} \frac{\dif}{\dif \epsilon} 8 \int_0^1 x^2 (G'(x) + \eps h'(x)) \dif x = -16 \int_0^1 x h(x) \dif x,
    \]
    and the third term has Gateaux derivative 
    \[
        \lim_{\epsilon \to 0} \frac{\dif}{\dif \epsilon} \left[4 \left( \int_0^1 x (G'(x) + \eps h'(x)) \dif x \right)^2 \right]
        = -8 \E[X_G] \int_0^1 h(x) \dif x,
    \]
    where $X_G \sim G$. Therefore, the Gateaux derivative of $\Phi$ itself at $G$ in the direction $h$ is
    \begin{equation*}\label{eq:Gateaux1}
        \delta \Phi[G; h] := \int_0^1 \left(-8 G(x) + 16 x - 8 \E[X_G] \right) h(x) \dif x.
    \end{equation*}
    
    Consider the candidate cdf $G^*$ defined by 
    \[
        G^*(x) 
        := 
        \begin{cases}
            0, & x \leq 1/4 \\
            2x - 1/2, & 1/4 < x < 3/4 \\
            1, & x \geq 3/4.
        \end{cases}
    \]
    That is, $G^*$ is the cdf of the $\mathrm{Unif}(1/4, 3/4)$ distribution. We claim that $\delta\Phi[G^*; h] \leq 0$ for every admissible test function $h$. Indeed,
    \begin{equation}\label{eq:gateauxatGstar}
        \delta\Phi[G^*; h] 
        =
        \int_0^{1/4} (16x - 4) h(x) \dif x
        + \int_{1/4}^{3/4} (16x - 4 - 8G^*(x)) h(x) \dif x + \int_{3/4}^1 (16x - 12) h(x) \dif x. 
    \end{equation}
    The first integral in \cref{eq:gateauxatGstar} is non-positive because the admissible test function $h$ must satisfy $h(x) \geq 0$ at points $x$ where $G^*(x) = 0$. The second integral vanishes by definition of $G^*$, and the third integral is non-positive because $h(x) \leq 0$ at points $x$ such that $G^*(x) = 1$. It follows that $\delta\Phi[G^*; h] \leq 0$.

    To see that $G^*$ is a global maximizer of $\Phi$, it suffices to verify that $\Phi$ is strictly concave over its domain. Observe that the second-order Gateaux derivative of $\Phi$ at any $G$ and in any admissible direction $h$ is given by
    \begin{align*}
        \delta^2 \Phi[G; h] &:= \lim_{\eps \to 0} \frac{\dif^2}{\dif\eps^2} \Phi(G + \eps h)
        \\
        &= 16 \int_0^1 x h(x) h'(x) \dif x + 8 \bigg( \int_0^1 x h'(x) \dif x \bigg)^2
        \\
        &= -8 \int_0^1 h(x)^2 \dif x + 8 \left( \int_0^1 h(x) \dif x \right)^2,
    \end{align*}
    where in the third equality we have once again used integration by parts. By the Cauchy-Schwarz inequality, 
    \[
        \left( \int_0^1 h(x) \dif x \right)^2 
        \leq 
        \int_0^1 h(x)^2 \dif x,
    \]
    with equality if and only if $h$ is constant almost everywhere. Because the admissible direction $h$ must satisfy $h(0) = h(1) = 0$ and be continuous, we have $\delta^2 \Phi[G; h] \leq 0$, with equality if and only if $h \equiv 0$. That is, $\delta^2 \Phi[G; h] < 0$ for any non-zero admissible direction $h$, and therefore the function $G \mapsto \delta^2 \Phi[G; h]$ is negative definite for all such directions $h$. Thus $\Phi$ is strictly concave over its domain, which establishes that the $\mathrm{Unif}(1/4, 3/4)$ cdf $G^*$ is the \emph{unique} global maximizer of $\Phi$. A routine computation then gives $\Phi[G^*] = 1/6$.
\end{proof}

\begin{rem}
    The proof of \cref{prop:tauupperbound} relies on a representation of $\tau_{C_f}$ as a functional of the cdf associated to the generator $f$. Alternatively, $\tau_{C_f}$ can be written as a functional of the associated probability measure:
    \[
        T[\mu] := 4 \int_{[0,1]} x(1-x) \dif \mu(x) + 2 \int_{[0,1]^2} |x - x'| \dif (\mu \otimes \mu) (x, x') - 4 \int_{[0,1]} \left(x - \int_{[0,1]} x \dif \mu(x) \right)^2 \dif \mu(x) - 1.
    \]
    It is interesting to note that, since $\mu_n \wc \mu$ implies $\mu_n \otimes \mu_n \wc \mu \otimes \mu$, the functional $T$ is continuous in the space of probability measures on $\R$ equipped with the topology of weak convergence. Moreover, absolutely continuous distributions form a dense subset of that topological space.\footnote{Indeed, for any $\eps>0$ and any cdf $G$, partition $[0,1]$ into subintervals of length at most $\eps$, and let $G_\eps$ be the function obtained by linearly interpolating the values of $G$ at the partition points. It is easy to verify that $G_\eps$ has a density and is within L\'evy distance $\eps$ from $G$. Thus, the subset of cdfs with densities is dense with respect to the topology induced by the L\'evy metric, which is well-known to be equivalent to the topology of weak convergence \citep[see, e.g.,][Exercise 14.5]{billingsleyProbabilityMeasure1995}.} Therefore, the global maximizer of $T$ over absolutely continuous probability measures that was identified in \cref{prop:tauupperbound} --- i.e., the $\mathrm{Unif}(1/4, 3/4)$ measure --- is, in fact, a global maximizer of $T$ over \emph{all} probability measures. Furthermore, using a careful smoothing argument, one can show that no purely atomic probability measure can attain the upper bound for Kendall's tau. This result contrasts starkly with the case of Spearman's rho, for which the upper bound behaves in precisely the opposite fashion: while the global maximizer of $T$ is a measure with an associated density in $\cF_{[0,1]}$, the global maximizer of the corresponding functional $\mu \mapsto 6\int_{[0,1]} x(1-x) \dif \mu(x) - 1$ associated with Spearman's rho resides in the space of purely atomic measures, as shown in the proof of \cref{prop:spearman}.
\end{rem}

We now turn to the $[0,1]$-valued functional $\xi_{C} := 6 ||\partial_1 C||_2^2 - 2$, which was introduced by \citet{detteCopulaBasedNonparametricMeasure2013} as a copula-based measure of dependence and is now commonly referred to as the \emph{Dette--Siburg--Stoimenov coefficient}. If $(X,Y)$ is a pair of continuous random variables with copula $C$, then $\xi_{C} = 0$ if and only if $X$ and $Y$ are independent, and $\xi_C = 1$ if and only if $Y$ is completely dependent on $X$ (i.e., $\Prob(Y = g(X)) = 1$ for some deterministic measurable function $g$ --- see \citet{lancasterCorrelationCompleteDependence1963}). This characterization of perfect functional (but not necessarily monotonic) dependence distinguishes this coefficients from other traditional measures of concordance. More recently, \citet{chatterjeeNewCoefficientCorrelation2021} proposed a rank-based sample correlation coefficient $\xi_n(X,Y)$ that has quickly gained considerable popularity, and showed that $\xi_n(X,Y)$ is a consistent estimator of $\xi_{C}$, sparking renewed interest in the latter quantity. 
\begin{prop}\label{prop:chatterjee}
    
    If $d=2$, then for every $f \in \cF_{[0, 1]}$ and $\bs \in \{0,1\}^2$, the copula $C^{\bs}_f$ has a Dette--Siburg--Stoimenov coefficient of
    \begin{equation}\label{eq:chatterjee}
        \xi_{C_f^{\bs}}
        =
        12 \Var(X) - 6\E[|X-X'|] + 1,
    \end{equation}
    where $X,X' \iid f$.
    In particular, the range of the Dette--Siburg--Stoimenov coefficient is given by
    \[
        \{\xi_{C_f^{\bs}}: f \in \cF_{[0,1]}\}  =
        [0,1).
    \]
    
\end{prop}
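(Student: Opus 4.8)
The strategy is to compute $\xi_{C_f^{\bs}}$ directly from the defining formula $\xi_C = 6\|\partial_1 C\|_2^2 - 2$, reusing machinery already developed for Kendall's tau. Since any measure of dependence of this type is invariant under reflection of either coordinate (replacing $Y$ by $1-Y$ does not change whether $Y$ is a function of $X$, nor whether they are independent), it suffices to treat the $\bs = \bzero$ case; the stated range then transfers verbatim to every signature. For $C_f = C_f^{\bzero}$, the proof of \cref{prop:kendall} already supplies the key computation: in \cref{eq:partialjC} we have the clean identity $\partial_1 C_f(\bu) = w(\bu) - F(u_1)$, where $w(\bu) = F(u_1 \oplus u_2) + \One\{u_1 + u_2 > 1\}$.

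\textbf{Main computation.} Squaring and integrating,
\[
    \|\partial_1 C_f\|_2^2
    = \int_{[0,1]^2} \left( w(\bu) - F(u_1)\right)^2 \dif \bu
    = \int_{[0,1]^2} w(\bu)^2 \dif \bu - 2 \int_{[0,1]^2} w(\bu) F(u_1) \dif \bu + \int_0^1 F(u_1)^2 \dif u_1 .
\]
The first two integrals are exactly the quantities evaluated in \cref{eq:intw2} and \cref{eq:intwF} during the proof of \cref{prop:kendall}, and the third was computed as an intermediate step in \cref{eq:intF2}. Substituting those three identities in terms of $\E[X]$, $\E[X^2]$ and $\E[|X-X'|]$, the $\E[X]$ and $\E[X^2]$ terms should collapse, leaving $\|\partial_1 C_f\|_2^2$ as an affine function of $\Var(X)$ and $\E[|X-X'|]$. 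Multiplying by $6$ and subtracting $2$ should yield precisely \cref{eq:chatterjee}. This part is entirely routine bookkeeping; I would present it compactly by citing the three displays from the proof of \cref{prop:kendall} rather than rederiving them.

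\textbf{The range.} This is where the real content lies. Lower bound: $\xi_{C_f} \geq 0$ always, with $0$ attained in the limit when $f$ concentrates near $1/2$ (then $\Var(X) \to 0$ and $\E[|X-X'|] \to 0$), consistent with $C_f \to \Pi$; one should check $0$ is not attained exactly, since $\xi_C = 0$ forces $C = \Pi$, which is impossible for $C_f$ unless $f \equiv 1$ — but $f\equiv 1$ gives $\Var(X) = 1/12$ and $\E[|X-X'|] = 1/3$, yielding $\xi = 12/12 - 6/3 + 1 = 0$; so in fact $f \equiv 1$ \emph{does} attain $0$, matching $C_1 = \Pi$. Hence $0 \in \{\xi_{C_f}\}$. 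Upper bound: one must show $12\Var(X) - 6\E[|X-X'|] + 1 < 1$, i.e. $2\Var(X) < \E[|X-X'|]$, for every $f \in \cF_{[0,1]}$, and that values arbitrarily close to $1$ are attained. The inequality $2\Var(X) \le \E[|X-X'|]$ is a classical fact (it is equivalent to $\E[(X-X')^2] \le \E[|X-X'|]$ since $\E[(X-X')^2] = 2\Var(X)$, and $|X-X'| \le 1$ a.s. gives $(X-X')^2 \le |X-X'|$), with equality iff $|X-X'| \in \{0,1\}$ almost surely — which for a non-degenerate distribution on $[0,1]$ forces the two-point law on $\{0,1\}$, not a member of $\cF_{[0,1]}$. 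Thus the bound is strict. To see $\xi$ gets arbitrarily close to $1$, take $f_n$ approaching the fair two-point distribution on $\{0,1\}$: then $\Var(X) \to 1/4$ and $\E[|X-X'|] \to 1/2$, giving $\xi_{C_{f_n}} \to 12/4 - 6/2 + 1 = 1$. Assembling the strict upper bound, the attainment of $0$, and the approach to $1$, together with intermediate-value reasoning (the map $f \mapsto \xi_{C_f}$ is continuous along the obvious one-parameter families), yields the range $[0,1)$.

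\textbf{Anticipated obstacle.} Nothing here is deep; the only point requiring a little care is the \emph{strictness} of the upper bound — ruling out equality in $2\Var(X) \le \E[|X-X'|]$ for all $f \in \cF_{[0,1]}$ — and confirming that the supremum $1$ is genuinely a limit and not attained. Both reduce to the observation that equality would force an atomic law on $\{0,1\}$, which has no Lebesgue density. One should also double-check the reflection-invariance claim for $\xi_C$ (it is standard but not one of the cited concordance-measure axioms, so a one-line argument via the complete-dependence/independence characterization is worth including).
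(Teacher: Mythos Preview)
Your main computation is identical to the paper's: expand $\|\partial_1 C_f\|_2^2 = \int w^2 - 2\int wF + \int F^2$ and substitute \cref{eq:intw2}, \cref{eq:intwF}, \cref{eq:intF2}. The paper presents exactly this.

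For the range, your argument for the strict upper bound differs from the paper's and is in some ways nicer. The paper simply says that $\xi_C = 1$ characterizes complete functional dependence, which is incompatible with absolute continuity. You instead prove $2\Var(X) < \E[|X-X'|]$ directly via $(X-X')^2 \le |X-X'|$ (since $|X-X'|\le 1$ a.s.) with equality only when $|X-X'|\in\{0,1\}$ a.s., impossible for a density. This is self-contained and avoids citing the Dette--Siburg--Stoimenov characterization. Your sequence approaching $1$ (the fair two-point law on $\{0,1\}$) differs from the paper's (a point mass at any $x_0$), but both work. Your care about reflection invariance of $\xi_C$ is also a genuine addition: the paper silently reuses \cref{eq:partialjC}, derived for $\bs=\bzero$, without noting that $\xi_C$ is not a concordance measure in the Scarsini sense and so the earlier reduction does not apply automatically; your suggested one-line check ($\int_0^1 \partial_1 C\,\dif u_1 = u_2$ gives $\|1-\partial_1 C\|_2^2 = \|\partial_1 C\|_2^2$) is exactly what is needed.

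One slip to fix: your remark that ``$0$ is attained in the limit when $f$ concentrates near $1/2$'' is backwards --- that limit gives $\Var(X)\to 0$ and $\E[|X-X'|]\to 0$, hence $\xi\to 1$, not $0$. You immediately self-correct by checking $f\equiv 1$, which is the right minimizer, so just delete the misleading clause.
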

\begin{proof}
    From \cref{eq:partialjC}, we have
    \[
        ||\partial_1 C_f^{\bs}||_2^2
        =
        \int_{[0,1]^2} (w(\bu) - F(u_1))^2 \dif \bu
        =
        \int_{[0,1]^2} w(\bu)^2 \dif \bu - 2\int_0^1 \int_0^1 w(\bu) F(u_1) \dif \bu + \int_0^1 F(u_1)^2 \dif u_1.
    \]
    Inserting \cref{eq:intw2,eq:intwF,eq:intF2} and rearranging, the expression above reduces to
    \[
        2\Var(X) -\E[|X-X'|] + \frac{1}{2},
    \]
    which implies \cref{eq:chatterjee}.
    
    Concerning the bounds, our family includes the independence copula, which is the only copula $C$ achieving $\xi_C = 0$. As for the upper bound of $1$, if $\{f_n\}$ is a sequence of generators in $\cF_{[0,1]}$ converging weakly to a point mass at any point $x_0 \in [0,1]$, then by \cref{eq:chatterjee} and a continuity argument, $\xi_{C_{f_n}} \to 1$. On the other hand, the upper limit of 1 is not attainable in $\cC$ since it corresponds to perfect functional dependence, which would contradict the existence of a bivariate density.
\end{proof}

\begin{rem}
    The extremal cases shown in the proof of \cref{prop:chatterjee} are highly intuitive. Indeed, suppose that $(U_1, U_2) \sim C_f^{\bs}$. When $f$ is constant on $[0,1]$, \cref{algo:sampler} shows that $U_2 = (-1)^{s_2}(U - \tU_1) \bmod{1}$ with $U \sim \stdunif$, so that adding a uniform shift modulo $1$ randomizes $\tU_1$ completely, thereby rendering $U_1$ and $U_2$ independent. At the other extreme, when $f$ tends to concentrate mass at some $x_0 \in [0,1]$, the shift in \cref{algo:sampler} becomes deterministic in the limit: $U_2$ converges pointwise to $(-1)^{s_2}(x_0 - \tU_1) \bmod{1}$, whence $U_1$ and $U_2$ become functionally dependent, and $C_f^{\bs}$ collapses to a wrapped version of the upper or lower Fr\'echet--Hoeffding bound (depending on $\bs$).
\end{rem}

To conclude our investigations into measures of concordance, we note that \cref{eq:chatterjee}, in combination with \cref{prop:spearman,prop:kendall}, yields the elegant identity
\begin{equation*}\label{eq:chatspearmankendall}
        \xi_{C_f^{\bs}}
        =
        (-1)^{s_1 + s_2}(2 \rho_{C_f^{\bs}} - 3 \tau_{C_f^{\bs}}),
    \end{equation*}
and from the preceding discussion, we obtain the following characterizations of both independence and complete dependence for bivariate copulas in $\cC$:
    \begin{coro}

        Let $f \in \cF_{[0,1]}$ and let $(U_1, U_2) \sim C_f^{\bs}$. Then, $U_1$ and $U_2$ are independent (i.e., $f$ is the $\stdunif$ density) if and only if $2 \rho_{C_f^{\bs}} =  3\tau_{C_f^{\bs}}$. Conversely, let $\{f_n\} \subset \cF_{[0,1]}$ and let $(U_1^{(n)}, U_2^{(n)}) \sim C_{f_n}^{\bs}$. Then, $(U_1^{(n)}, U_2^{(n)})$ converges weakly to $(U_1, (-1)^{s_2}(x_0 - U_1) \bmod 1)$ for some $x_0 \in [0,1]$ (i.e., the distribution implied by $f_n$ converges weakly to the Dirac measure $\delta_{x_0}$) if and only if $2 \rho_{C_{f_n}^{\bs}} - 3\tau_{C_{f_n}^{\bs}} \to (-1)^{s_1 + s_2}$.
    \end{coro}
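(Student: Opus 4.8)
The entire argument is built on the identity $\xi_{C_f^{\bs}} = (-1)^{s_1+s_2}\big(2\rho_{C_f^{\bs}} - 3\tau_{C_f^{\bs}}\big)$ displayed immediately before the statement, together with the two defining features of the Dette--Siburg--Stoimenov coefficient recalled before \cref{prop:chatterjee}: for a pair of continuous random variables with copula $C$, one has $\xi_C = 0$ iff the variables are independent and $\xi_C = 1$ iff the second is almost surely a measurable function of the first. The plan is to convert both halves of the corollary into statements about $\xi_{C_f^{\bs}}$ and to read off the conclusions from these two facts.

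For the first equivalence, the condition $2\rho_{C_f^{\bs}} = 3\tau_{C_f^{\bs}}$ is, by that identity, exactly $\xi_{C_f^{\bs}} = 0$, which holds iff $U_1$ and $U_2$ are independent; since $C_f^{\bs}$ has uniform margins, this is the same as $C_f^{\bs} = \Pi$. Finally, $c_f^{\bs}(\bu) = f(\tu_1 \oplus \tu_2)$ agrees with the density of $\Pi$ (the constant $1$) for a.e.\ $\bu \in [0,1]^2$ iff $f = 1$ a.e.\ on $[0,1]$ --- immediate either from the uniqueness clause of \cref{prop:Ccharacteriation} or directly from \cref{lem:Lebesguepushforward}, since $\tU_1 \oplus \tU_2 \sim \stdunif$. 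So the first part is a one-line consequence of results already in hand.

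For the second equivalence, I would first use $\Var(X) = \tfrac12\E[(X-X')^2]$ and $|X-X'|^2 = (X-X')^2$ to rewrite \cref{eq:chatterjee} as
\[
    \xi_{C_{f_n}^{\bs}} = 1 - 6\,\E\big[\,|X_n - X_n'|\,(1 - |X_n - X_n'|)\,\big], \qquad X_n, X_n' \iid f_n.
\]
The function $t \mapsto t(1-t)$ is nonnegative on $[0,1]$ with zeros only at $0$ and $1$, which --- viewing $f_n$ as a law $\nu_n$ on the circle $\T$ and noting $|X_n - X_n'| \in [0,1]$ --- are precisely the values of $|x - x'|$ with $d_{\T}(x,x') = 0$; moreover $\tfrac12 d_{\T}(x,x') \le |x-x'|(1-|x-x'|) \le d_{\T}(x,x')$. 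Hence $2\rho_{C_{f_n}^{\bs}} - 3\tau_{C_{f_n}^{\bs}} \to (-1)^{s_1+s_2}$ --- i.e.\ $\xi_{C_{f_n}^{\bs}} \to 1$ --- is equivalent to $\E[d_{\T}(X_n, X_n')] \to 0$, hence to $d_{\T}(X_n, X_n') \to 0$ in probability. Separately, \cref{algo:sampler} gives $(U_1^{(n)}, U_2^{(n)}) \stackrel{d}{=} \big(U_1,\,(-1)^{s_2}(X_n - \tU_1)\bmod 1\big)$ with $U_1 \sim \stdunif$ independent of $X_n \sim \nu_n$; since the relevant circle-group operations are continuous and the limiting laws below place no mass on $\partial[0,1]^2$, an application of the continuous mapping theorem in both directions (on $\T^2$, noting the map above is continuous off a $\delta_{x_0}$-null set) yields $(U_1^{(n)}, U_2^{(n)}) \wc \big(U_1,\,(-1)^{s_2}(x_0 - U_1)\bmod 1\big)$ iff $\nu_n \wc \delta_{x_0}$, the reverse implication using that $\tU_1 \oplus \tU_2$ has law $\nu_n$ and is a continuous function of $(U_1^{(n)},U_2^{(n)})$ on $\T^2$.

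What remains --- and what I expect to be the main obstacle --- is the equivalence of ``$\nu_n \wc \delta_{x_0}$ for some $x_0 \in \T$'' with ``$d_{\T}(X_n,X_n') \to 0$ in probability''. The forward direction is immediate ($\nu_n \wc \delta_{x_0}$ forces $X_n - X_n' \wc 0$ on $\T$, so $d_{\T}(X_n,X_n') \to 0$ in $L^1$). For the converse I would run a concentration/compactness argument on the compact group $\T$: if $\nu_n$ charged each of two arcs at distance $\ge \varepsilon$ with mass $\ge \eta$, then $\Prob\big(d_{\T}(X_n,X_n') \ge \varepsilon\big) \ge 2\eta^2$, so $d_{\T}(X_n,X_n') \to 0$ in probability implies that for every $\varepsilon > 0$ all but an $o(1)$ fraction of the mass of $\nu_n$ eventually lies in a single arc of length $\varepsilon$, and then weak sequential compactness of probability measures on $\T$ forces convergence to a point mass. (One should be careful here: a sequence oscillating between two distinct degenerate limits --- say $f_n$ concentrating near $0$ for even $n$ and near $\tfrac12$ for odd $n$ --- still has $\xi_{C_{f_n}^{\bs}} \to 1$ while $(U_1^{(n)},U_2^{(n)})$ fails to converge, so the statement is cleanest phrased along subsequences, or with the understanding that $x_0$ is pinned down once the sequence stabilizes.) Granting this, chaining $\{2\rho_n - 3\tau_n \to (-1)^{s_1+s_2}\} \Leftrightarrow \{\xi_{C_{f_n}^{\bs}} \to 1\} \Leftrightarrow \{\nu_n \wc \delta_{x_0}\} \Leftrightarrow \{(U_1^{(n)},U_2^{(n)}) \wc (U_1,(-1)^{s_2}(x_0 - U_1)\bmod 1)\}$ completes the proof.
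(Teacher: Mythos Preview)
The paper does not give an explicit proof of this corollary; it presents the statement as an immediate consequence of the identity $\xi_{C_f^{\bs}} = (-1)^{s_1+s_2}(2\rho_{C_f^{\bs}} - 3\tau_{C_f^{\bs}})$ together with \cref{prop:chatterjee} and the surrounding remarks. Your approach follows exactly this route --- converting both equivalences into statements about $\xi$ --- so for the first part (independence $\Leftrightarrow$ $\xi=0$ $\Leftrightarrow$ $2\rho=3\tau$) your argument is precisely what the paper has in mind.

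For the second part you actually supply more than the paper does. The paper, in the proof of \cref{prop:chatterjee} and the remark after it, only argues the forward direction ($f_n \wc \delta_{x_0}$ implies $\xi_{C_{f_n}^{\bs}} \to 1$ by continuity of the functional, and the bivariate limit follows from \cref{algo:sampler}); the converse is left implicit. Your rewriting $\xi_{C_{f_n}^{\bs}} = 1 - 6\,\E[|X_n-X_n'|(1-|X_n-X_n'|)]$ and the two-sided comparison with the circle metric $d_\T$ are correct and give a clean route to that converse, and your concentration/compactness sketch on $\T$ is the right idea for turning $\E[d_\T(X_n,X_n')]\to 0$ into (subsequential) convergence to a point mass. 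You are also right to flag the oscillation issue: a sequence alternating between two distinct degenerate limits satisfies $\xi_{C_{f_n}^{\bs}}\to 1$ without $(U_1^{(n)},U_2^{(n)})$ converging weakly, so the ``if'' direction as literally stated requires a subsequence qualification. The paper's statement glosses over this; your caveat is the honest one.
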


\subsubsection{Other properties}

We now turn away from measures of concordance to briefly note an interesting algebraic property of $\cC$. \citet{darsowCopulasMarkovProcesses1992} define the \emph{$*$-product} of two bivariate copulas $C_1$ and $C_2$ by
\[
    (C_1 * C_2)(u_1,u_2) 
    :=
    \int_0^1 \partial_1 C_1(v, u_1) \cdot \partial_2 C_2(u_2,v) \dif v,
\]
which is itself a bivariate copula; moreover, the $*$-product induces a semigroup structure on the space of bivariate copulas (i.e., it is associative and $\Pi$ acts as an identity element). It is not hard to show that when $d=2$, $\cC$ is closed under this operation: by Tonelli's theorem, for $f, g \in \cF_{[0,1]}$ and $\br, \bs \in \{0,1\}^2$,
\begin{align*}
    (C^{\br}_f * C^{\bs}_g)(u,v) &= \int_0^u \int_0^v \int_0^1 f( (-1)^{r_1} z \oplus (-1)^{r_2} x) \cdot g( (-1)^{s_1} y \oplus (-1)^{s_2} z) \dif z \dif y \dif x \\
    &= \int_0^u \int_0^v \int_0^1 f( (-1)^{r_2}x \oplus (-1)^{r_1 + s_1 + s_2 + 1} y \oplus (-1)^{r_1 + s_2} z ) g(z) \dif z \dif y \dif x.
\end{align*}
Therefore,
\[
    C^{\br}_f * C^{\bs}_g = C^{\bt}_h,
\]
where $h(x):= \int_0^1 f((-1)^{r_1} x \oplus (-1)^{r_1 + s_2} y) g(y) \dif y$ (i.e., $h$ is the density of $(-1)^{r_1}X \oplus (-1)^{r_1 + s_2 + 1}Y$, where $X \sim f$ and $Y \sim g$ are independent) and $\bt = (r_1 + r_2 \bmod{2}, \, s_1 + s_2 + 1 \bmod{2})$. Thus, $(\cC, *)$ is a subsemigroup of the aforementioned semigroup.

We conclude this section with a brief investigation of tail dependence. In particular, we find that if $\bU \sim C^{\bs}_f$, then no subvector of $\bU$ has positive (upper or lower) tail dependence. Below, we write $\bar{C}_f^{\bs}$ for the survival function of the copula $C_f^{\bs}$, defined as $\bar{C}_f^{\bs}(\bu) := \Prob(\bU > \bu)$.

\begin{prop} \label{prop:tail}
    For every generator $f \in \cF_{[0, 1]}$ and signature $\bs$, we have $C_f^{\bs}(t, \dots, t) = o(t)$ as $t \downarrow 0$ and $\bar{C}_f^{\bs}(t, \ldots, t) = o(1-t)$ as $t \uparrow 1$.
\end{prop}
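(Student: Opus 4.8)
The plan is to reduce both statements to a single tail estimate for the lower diagonal $C^{\bs}_f(t,\dots,t)$ as $t\downarrow0$, obtained from the explicit construction in \cref{algo:sampler}, and to pass between the two via a coordinate reflection. For the latter: using the equivalent formula $c^{\bs}_f(\bu)=f\!\bigl(\bigoplus_{j=1}^d(-1)^{s_j}u_j\bigr)$ from the footnote to \cref{eq:ourcopula}, a one-line change of variables shows that if $\bU\sim C^{\bs}_f$ then $\bone-\bU$ has density $\bv\mapsto c^{\bs}_f(\bone-\bv)=c^{\bone-\bs}_f(\bv)$, since the integer $\sum_j(-1)^{s_j}$ disappears modulo $1$; hence $\bone-\bU\sim C^{\bone-\bs}_f$ and therefore
\[
    \bar C^{\bs}_f(t,\dots,t)=\Prob(\bU>t\bone)\le\Prob\bigl(\bone-\bU\le(1-t)\bone\bigr)=C^{\bone-\bs}_f(1-t,\dots,1-t).
\]
Thus the second claim will follow from the first, applied with signature $\bone-\bs$ and argument $1-t\downarrow0$, and it remains to establish the first.

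To do so, I would realize $\bU\sim C^{\bs}_f$ through \cref{algo:sampler}: $U_1,\dots,U_{d-1}\iid\stdunif$, $X\sim f$ independent of them, and $U_d=(-1)^{s_d}\bigl(X-\sum_{j=1}^{d-1}\tilde U_j\bigr)\bmod 1$. Condition on $U_1,\dots,U_{d-1}$ and restrict to the event $E_t:=\{U_j\le t,\ j=1,\dots,d-1\}$, which has probability $t^{d-1}$. On $E_t$ each $\tilde U_j$ equals $U_j\in[0,t]$ or $1-U_j\in[1-t,1]$, so $\tilde U_j\equiv(-1)^{s_j}U_j\pmod 1$ with $\lvert(-1)^{s_j}U_j\rvert\le t$; hence, for $t<\tfrac1{2d}$, the quantity $a:=\sum_{j=1}^{d-1}\tilde U_j$ reduces modulo $1$ to a point within circle-distance $(d-1)t$ of $0$. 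The constraint $U_d\le t$ then confines $X$ to a length-$t$ arc of $\T$ with $a$ as an endpoint, every point of which is within circle-distance $dt$ of $0$; as $X\in[0,1]$, this forces $X\in[0,dt]\cup[1-dt,1]$. Taking conditional expectations and using the independence of $X$ from $U_1,\dots,U_{d-1}$,
\[
    C^{\bs}_f(t,\dots,t)=\Prob\bigl(E_t\cap\{U_d\le t\}\bigr)\le t^{d-1}\,\Prob\bigl(X\in[0,dt]\cup[1-dt,1]\bigr)=t^{d-1}\,\delta(t),
\]
where $\delta(t):=F(dt)+1-F(1-dt)\to F(0)+1-F(1)=0$ as $t\downarrow0$ since $F$ is the cdf of an element of $\cF_{[0,1]}$. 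Because $d\ge2$, dividing by $t$ gives $C^{\bs}_f(t,\dots,t)/t\le t^{d-2}\delta(t)\le\delta(t)\to0$, i.e.\ $C^{\bs}_f(t,\dots,t)=o(t)$.

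The change of variables and the accounting around $E_t$ are routine; the step I expect to require the most care is the modular arithmetic that localizes $X$ — one has to verify that the relevant length-$t$ arc around $a$ stays pinned near $0$ (equivalently near the identified endpoints $\{0,1\}$ of $[0,1]$) rather than wrapping to another part of $\T$, which is exactly what the restriction $t<\tfrac1{2d}$ guarantees. It is also worth observing that the factor $t^{d-1}$ on its own already yields the conclusion when $d\ge3$, whereas for $d=2$ the vanishing of $\delta(t)$ is essential; this is where the hypothesis that $f$ be a genuine density (rather than a limit concentrating at $0$, whose ``copula'' would be comonotone with lower tail dependence coefficient $1$) enters.
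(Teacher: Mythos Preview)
Your argument is correct. The upper-tail reduction via $\bone-\bU\sim C_f^{\bone-\bs}$ is exactly what the paper does. For the lower tail, however, you take a genuinely different route: the paper splits into $d\ge3$ (where the $(d-1)$-dimensional marginal being $\Pi$ already gives $C_f^{\bs}(t,\dots,t)\le t^{d-1}=o(t)$) and $d=2$ (where it computes the integral directly, case-splitting on $s_2$ to obtain bounds like $tF(2t)$ or $t[F(t)+1-F(1-t)]$). Your approach instead uses the stochastic representation of \cref{algo:sampler} to give a single argument valid for all $d\ge2$, localizing $X$ near the identified endpoints of $[0,1]$ via the modular arithmetic. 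What this buys you is uniformity and a cleaner conceptual picture---the role of absolute continuity of $F$ is made transparent through $\delta(t)\to0$, and your own closing remark that $t^{d-1}$ suffices for $d\ge3$ recovers the paper's shortcut as a corollary. What the paper's approach buys is that the $d\ge3$ case becomes a one-liner from \cref{prop:basicproperties} with no modular bookkeeping at all, at the cost of a separate hands-on computation in dimension two.
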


\begin{proof}
    We first show the lower tail dependence result. Recall that by \cref{prop:basicproperties}, if $d \geq 3$ then the $(d-1)$-marginals of $C_f^{\bs}$ are equal to the $(d-1)$-dimensional independence copula. Thus $C_f^{\bs}(t, \dots, t) \leq t^{d-1}$.
    
    If $d = 2$, suppose without loss of generality that $s_1 = 0$, so that $c_f^{\bs}(u, v) = f(u \oplus v^{1-s_2} (1-v)^{s_2} )$. We may also assume without loss of generality that $t < 1/2$, since we are interested in the limit as $t \downarrow 0$. The form of $C_f^{\bs}(t, t)$ depends on the signature component $s_2$. If $s_2=0$,
    \[
        C_f^{\bs}(t, t) = \int_0^t \int_0^t f(u + v) \dif v \dif u = \int_0^t \big( F(u+t) - F(u) \big) \dif u \leq t F(2t) = o(t)
    \]
    since $F$ is a continuous cdf with $F(0) = 0$. Similarly, if $s_2=1$,
    \begin{align*}
        C_f^{\bs}(t, t)
        &= \int_0^t \int_0^t f(u \oplus 1-v) \dif u \dif v
        \\
        &= \int_0^t \bigg\{ \int_0^u f(u-v) \dif v + \int_u^t f(1+u-v) \dif v \bigg\} \dif u
        \\
        &= \int_0^t \{ F(u) + F(1) - F(1+u-t) \} \dif u
        \\
        &\leq t F(t) + t(F(1) - F(1-t))
        \\
        &= o(t).
    \end{align*}
    Thus, $C_f^{\bs}(t, \dots, t) = o(t)$ as $t \downarrow 0$.
    
    As for upper tail dependence, a simple calculation first shows that if $\bU \sim C_f^{\bs}$, then $\bone - \bU \sim C_{g}^{\bs}$, where $g(x) := f(1-x) \in \cF_{[0,1]}$; equivalently, $\bone - \bU \sim C_f^{\bone - \bs}$ (see \cref{sub:construction} for details). Therefore,
    \[
        \bar{C}_f^{\bs}(t, \ldots, t)
        =
        \Prob\left(1 - U_1 \leq 1-t, \ldots, 1 - U_d \leq 1-t\right)
        =
        C_g^{\bs}(1-t, \ldots, 1-t).
    \]
    Since the above result for lower tail dependence holds for any generator, it holds for the generator $g$ as well. Thus, the right-hand side of the above display is $o(1-t)$ as $1-t \downarrow 0$, as required.
\end{proof}

\section{Examples}\label{sec:examples}

\subsection{Simple examples}\label{sub:simpleexamples}

To begin with, it is obvious from \cref{eq:ourcopula} that by choosing the generator $f_1 := 1$, we obtain the density $c_{f_1} = 1$ of $\Pi$, as shown in the left panel of \cref{fig:cop_pw}. More generally, we can define a sequence $\{f_n\}$ in $\cF_{[0,1]}$ by
\[
    f_n(x) := \sum_{j=1}^{n} \frac{2j-1}{n} \One\{x \in \left[(j-1)/n, j/n \right]\}.
\]
Each $f_n$ is piecewise constant, placing uniform mass within each interval $[(j-1)/n, j/n]$. The center panel of \cref{fig:cop_pw} shows $c_{f_{10}}$ for the bivariate case. Generally, when $n \geq 2$ the piecewise constancy of $f_n$ manifests itself in $c_{f_n}$ as $n$ pairs of isosceles trapezoids, each perpendicular to the $z$-axis and separated from one another by distance $1/n$.

\begin{figure}[ht]
    \centering
    \includegraphics[width=0.32\textwidth]{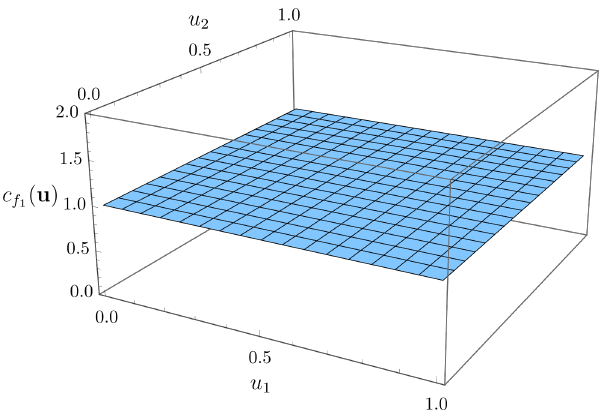}
    \includegraphics[width=0.32\textwidth]{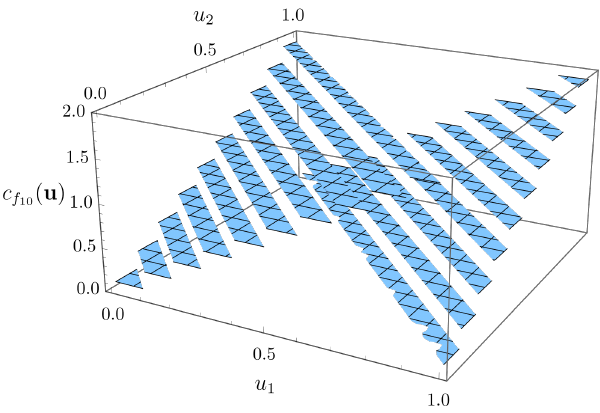}
    \includegraphics[width=0.32\textwidth]{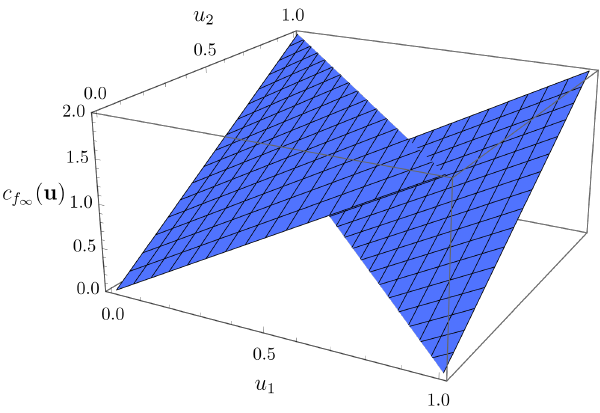}
    \caption{Bivariate copula densities corresponding to the piecewise constant generators $f_1$ and $f_{10}$, and the limiting triangular distribution $f_\infty.$}
    \label{fig:cop_pw}
\end{figure}

Note that $c_{f_n} \to c_{f_\infty}$ pointwise, where $f_\infty(x) := \lim_{n \to \infty} f_n(x) = 2x$, a special case of the triangular distribution density. By Scheff\'{e}'s lemma, the sequence of copulas $C_{f_n}$ converges weakly to $C_{f_\infty}$. The density $c_{f_{\infty}}$ is shown in the right panel of \cref{fig:cop_pw}.

For examples of more ``typical'' generators, we consider the $\text{Beta}(\alpha,\beta)$ distribution for several pairs of parameters $(\alpha, \beta)$. When $\alpha = \beta = 3/2$, the generator is
\[
    f_{\frac{3}{2},\frac{3}{2}}(x) := \frac{8\sqrt{x(1-x)}}{\pi}.
\]
This density has zeros at $0$ and $1$, and so the corresponding copula density $c_{f_{\frac{3}{2},\frac{3}{2}}}$ is zero on hyperplanes of the form $\{\bu \in [0, 1]^d: \sum_{j=1}^m u_j = m\}$, where $m \in \{0,1,\ldots,d\}$. In the bivariate case, this corresponds to zeros along the line $u_1 + u_2 = 1$ as well as at the two isolated points $\bzero$ and $\bone$, as shown in the top left panel of \cref{fig:cop_beta}. On the other hand, with $\alpha = \beta = 1/2$, the generator is
\[
    f_{\frac{1}{2},\frac{1}{2}}(x) := \frac{1}{\pi \sqrt{x(1-x)}}.
\]
The density $c_{f_{\frac{1}{2},\frac{1}{2}}}$ is essentially the reciprocal of the previous situation, with singularities along the same line and isolated points in the bivariate case, as shown in the top center panel of \cref{fig:cop_beta}. Finally, we can introduce asymmetry into the copula density by choosing any $\alpha \neq \beta$. For example, $\alpha = 1/2$ and $\beta = 3/2$ yield the generator
\[
    f_{\frac{1}{2},\frac{3}{2}}(x) := \frac{2}{\pi}\sqrt{\frac{1-x}{x}}.
\]
and $c_{f_{\frac{1}{2},\frac{3}{2}}}$ is shown in the top right panel of \cref{fig:cop_beta}.

\begin{figure}[ht]
    \centering
    \includegraphics[width=0.32\textwidth]{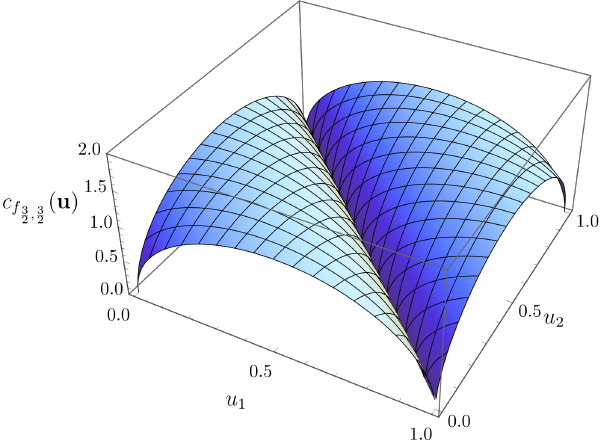}
    \includegraphics[width=0.32\textwidth]{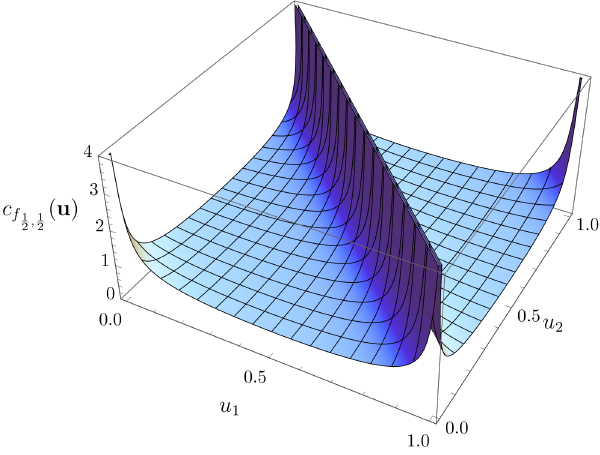}
    \includegraphics[width=0.32\textwidth]{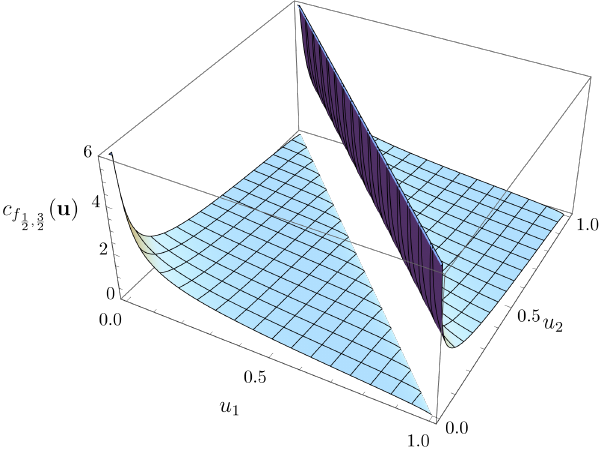}
    \newline
    \includegraphics[width=0.32\textwidth]{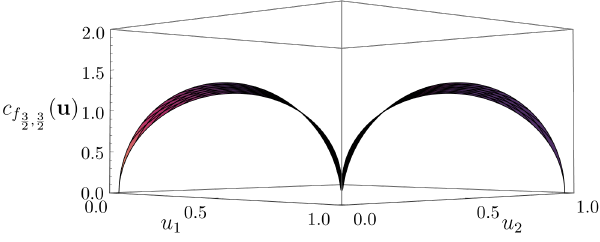}
    \includegraphics[width=0.32\textwidth]{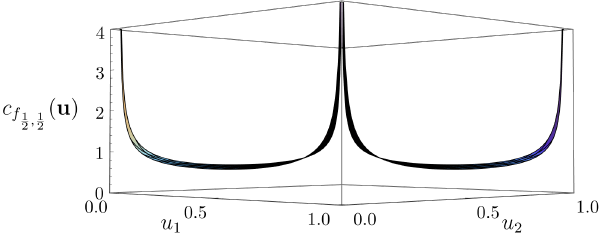}
    \includegraphics[width=0.32\textwidth]{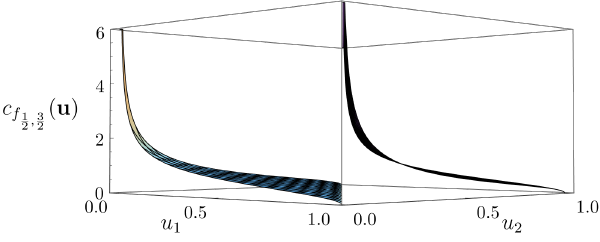}
    \caption{Top row: bivariate copula densities corresponding to the beta generators $f_{\frac{3}{2},\frac{3}{2}}$, $f_{\frac{1}{2},\frac{1}{2}}$, and $f_{\frac{1}{2},\frac{3}{2}}$. Bottom row: different views of the same graphs.}
    \label{fig:cop_beta}
\end{figure}

Next, consider
\[
    f(x) := \frac{5}{2}\frac{\varphi(10x - 5/2)}{\Phi(15/2) - \Phi(-5/2) } + \frac{15}{2} \frac{\varphi(10x - 15/2)}{\Phi(5/2) - \Phi(-15/2) }.
\]
This is the density of an unequal mixture of a $\cTN_{[0,1]}(1/4, 1/100)$ distribution and a $\cTN_{[0,1]}(3/4, 1/100)$ distribution, where $\cTN_{[a,b]}(\mu, \sigma^2)$ refers to the $\cN(\mu, \sigma^2)$ distribution truncated to the interval $[a,b]$. To illustrate the effect of changing the signature $\bs$, we show bivariate copula densities generated by $f$ with all four possible signatures in \cref{fig:cop_mixturenorm}. 

\begin{figure}[ht]
    \centering
    \includegraphics[width=0.32\textwidth]{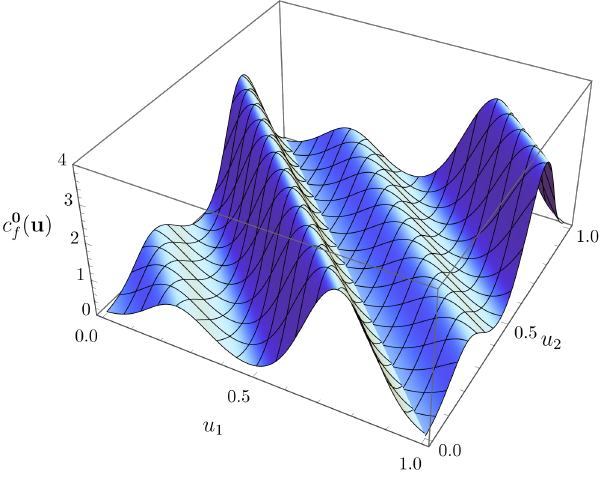}
    \includegraphics[width=0.32\textwidth]{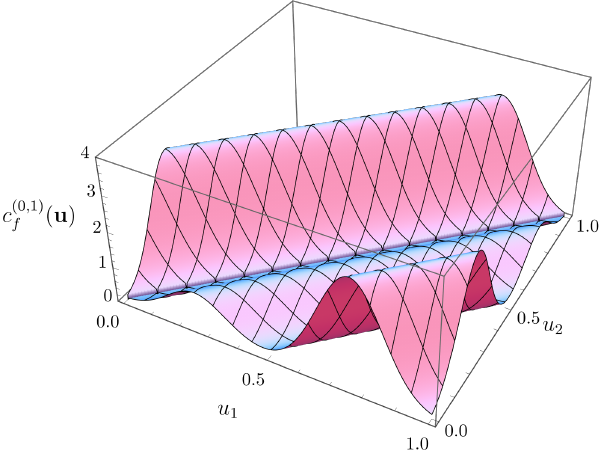}\\
    \includegraphics[width=0.32\textwidth]{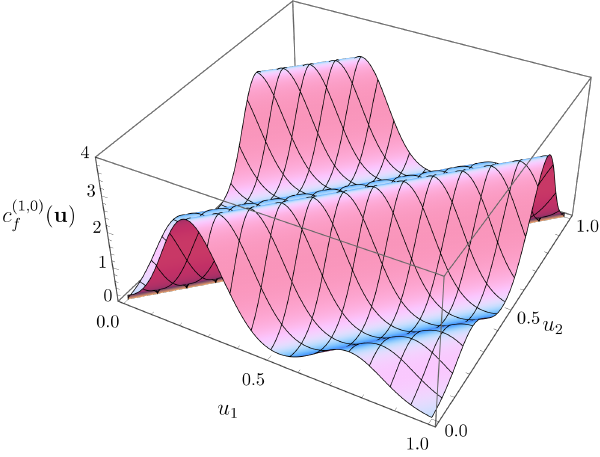}
    \includegraphics[width=0.32\textwidth]{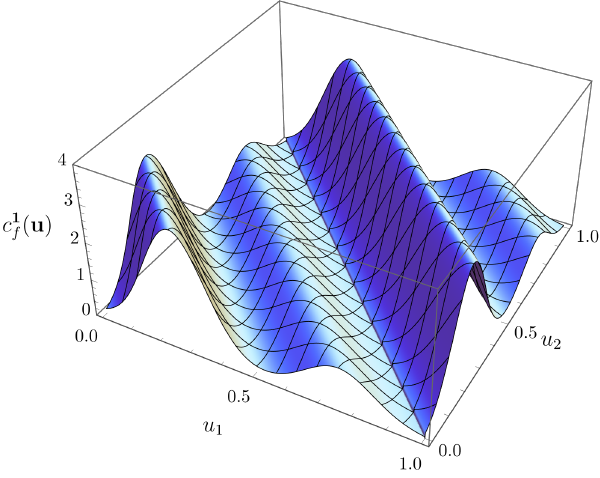}
    \caption{Bivariate copula densities corresponding to the mixture of truncated normals generator with signatures $\bzero$, $(0,1)$, $(1,0)$, and $\bone$.}
    \label{fig:cop_mixturenorm}
\end{figure}

Inspecting the plots in \cref{fig:cop_pw,fig:cop_beta,fig:cop_mixturenorm} reveals additional ``graphical'' properties of our bivariate densities $c_f^{\bs}$ that readily extend to arbitrary dimensions $d \geq 3$. It is apparent from \cref{eq:ourcopula} that $c_f^{\bs}$ is constant and equal to $f(r \bmod{1})$ on any hyperplane of the form $H_r := \{\bu \in [0,1]^d: \sum_{j=1}^d \tu_j = r\}$, where $r \in \R$. Moreover, the graph $\{(\bu, c_f^{\bs}(\bu)): \bu \in (0,1)^d\} \subseteq \R^{d+1}$ admits a natural $2$-dimensional visualization via orthogonal projection onto the subspace spanned by the vector $((-1)^{s_1}, \ldots, (-1)^{s_d}, 0)$ and the vertical axis. Indeed, this projection maps each $\bu$ to a point whose horizontal coordinate is given by $(1/\sqrt{d}) \sum_{j=1}^d \tu_j$ and whose vertical coordinate is $c^{\bs}_f(\bu)$. Thus, every hyperplane $H_r$ is projected to the point $(r/\sqrt{d}, f(r \bmod{1}))$, and as $r$ ranges over $[0, d]$, the image consists of $d$ horizontally compressed copies of the graph of $f$ over $[0,1]$, placed side by side on the intervals $[j/\sqrt{d}, (j+1)/\sqrt{d})$ for each $j \in \{0,1,\ldots,d-1\}$, with possible reflection depending on $\bs$. When $d = 2$, this projection can be visualized by a rotation of the ($3$-dimensional) graph, as illustrated in the second row of \cref{fig:cop_beta}.

\subsection{A poorly behaved univariate density}\label{sub:univariate}

The properties satisfied by a copula are stronger than the characterizing properties of multivariate distribution functions (e.g., monotonicity in each input, boundedness, and so on). For example, it is not hard to see that copulas are $1$-Lipschitz with respect to the supremum norm. One might therefore suspect that densities of absolutely continuous copulas cannot be too poorly-behaved. In practice, one often encounters diverging behavior on the boundary of the support or on sets of measure zero. For example, the density of the Clayton copula with parameter $\theta$ is unbounded along the curve $\sum_{j=1}^d u_j^{-\theta} = d-1$ when $\theta < -1/d$, while the density of the Gaussian copula with correlation matrix $\Rb$ is unbounded in some regions, including around the corners $\bzero$ and $\bone$ when all entries of $\Rb$ are positive. Other examples include the Gumbel and $t$ families \citep[see, e.g.,][]{joe2014dependence}. Such behavior is inconvenient but usually not overly restrictive. However, as we shall demonstrate, the highly regular copulas in $\cC$ can have densities that are poorly behaved on their entire domain.

We now construct a generator $f^* \in \cF_{[0,1]}$ which, in contrast with the examples of \cref{sub:simpleexamples}, is quite pathological. To this end, let $\QQ_1 := \QQ \cap (0, 1)$ and $q_1, q_2,\ldots$ be an arbitrary enumeration of $\QQ_1$. For each $q \in \QQ_1$, we define densities $f_q^+$ and $f_q^-$ that diverge at $1-q$ by
\begin{equation}
	f_q^\pm(u) := \begin{cases}\dfrac{1}{2\sqrt{(\pm(u + q))\bmod{1}}}, & u \neq 1-q \\
	1/2, & u = 1-q\\ \end{cases}
\label{eq:f_q}\end{equation}
and $f_q := (f_q^+ + f_q^-)/2$. By a simple change of variable, we see that
\begin{equation} \label{eq:integralfq}
	\int_0^1 f_q^+(u) \dif u = \int_0^{1-q} \frac{1}{2\sqrt{u + q}} \dif u + \int_{1-q}^1 \frac{1}{2\sqrt{u + q - 1}} \dif u = \int_q^1 \frac{1}{2\sqrt{u}} \dif u + \int_0^q \frac{1}{2\sqrt{u}} \dif u = 1,
\end{equation}
and similarly $\int_0^1 f_q^-(u) \dif u = 1$, so $f_q$ is a proper density. Now, let positive weights $\{w_q\}$ be given such that $\sum_{q \in \QQ_1} w_q = 1$. Valid choices include, for example, $w_{q_m}^{(1)} := 6/(\pi^2 m^2)$ and $w_{q_m}^{(2)} := 2^{-m}$, or indeed, any normalized and positive summable sequence. Define
\[
	h^*(x) := \sum_{q \in \QQ_1} w_q f_q(x).
\]
To provide intuition, we show several normalized partial sums of $h^*$ in \cref{fig:fplot}. Each partial sum $h_m^*(x) := \sum_{j=1}^m w_{q_j} f_{q_j}(x)$ is constructed by choosing for $q_1,\ldots,q_m$ the $m$ rationals evenly spaced within the closed interval $[1/(m+1), m/(m+1)]$. The weights are chosen as $w_{q_j} \propto 1.1^{-\pi(j)}$, $j=1,\ldots, m$ for a random permutation $\pi$ of $(1,\ldots,m)$. 

\begin{figure}[ht]
    \centering
    \includegraphics[width=0.32\textwidth]{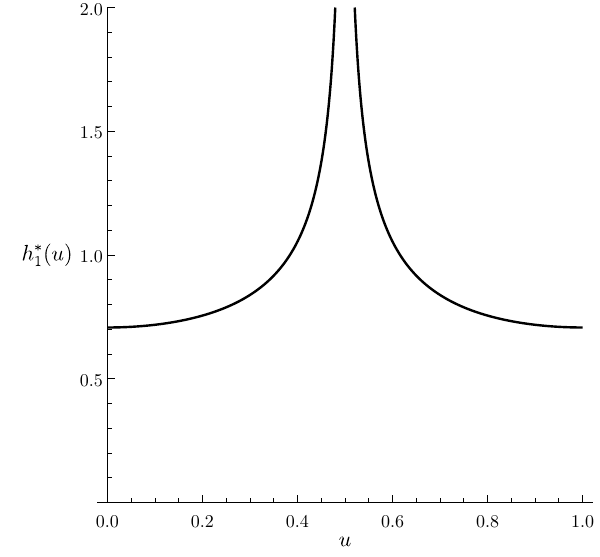}
    \includegraphics[width=0.32\textwidth]{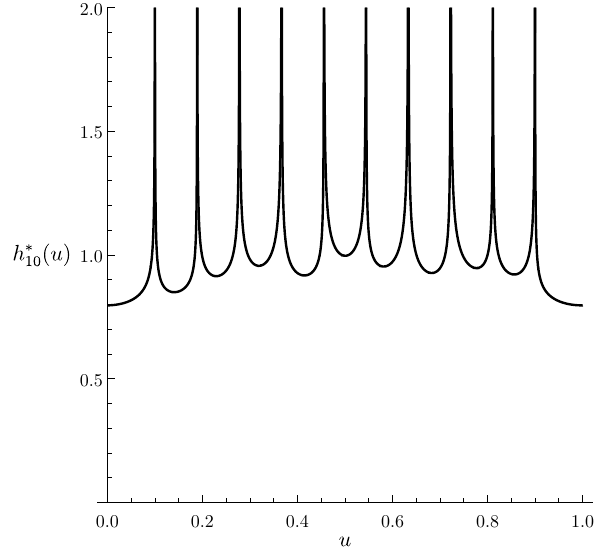}
    \includegraphics[width=0.32\textwidth]{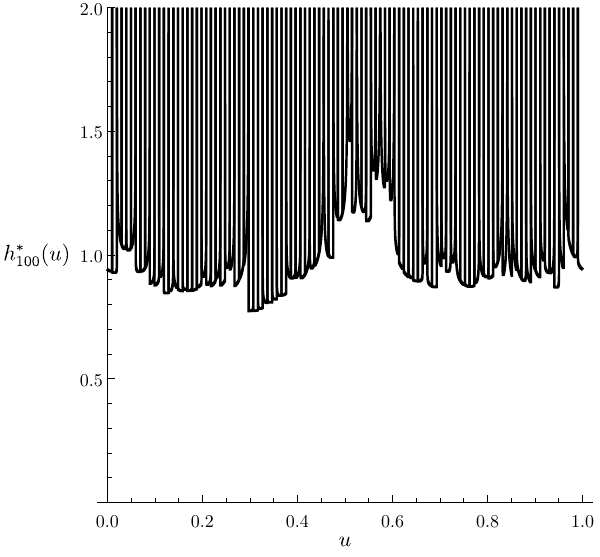}
    \caption{From left to right, the normalized partial sums $h_1^*$, $h_{10}^*$ and $h_{100}^*$.}
    \label{fig:fplot}
\end{figure}
The function $h^*$ is non-negative and measurable, and by Tonelli's theorem it has Lebesgue integral over $[0,1]$ equal to $1$. It is thus a proper density in $\cF_{[0,1]}$, although it is not Riemann integrable (as we shall see shortly). Witness to the irregularity of the associated copula density is the set of singular points of $h^*$, which we investigate here. Consider the set $A := \{x \in [0,1]: h^*(x)= \infty\}$. While $\QQ_1$ is clearly contained in $A$, the latter set is in fact much larger:

\begin{prop}
$A$ is uncountable.
\end{prop}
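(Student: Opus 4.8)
The plan is to show that $A \cap (0,1)$ is a \emph{dense $G_\delta$} subset of $(0,1)$ and then invoke the Baire category theorem. The engine of the argument is a crude pointwise lower bound on each $f_q$ together with the lower semicontinuity of $h^*$; notably, it will never be necessary to exhibit an explicit element of $A$.

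First I would record, for each $q \in \QQ_1$, the elementary estimate
\[
    f_q(x) \;\geq\; \frac{1}{4\sqrt{|x - (1-q)|}}, \qquad x \in (0,1) \setminus \{1-q\}.
\]
This follows from a two-case computation that unwinds the wrapped arguments: if $x > 1-q$ then $(x+q) \bmod 1 = x - (1-q)$, so $f_q^+(x) = \tfrac12 |x-(1-q)|^{-1/2}$ and $f_q \geq \tfrac12 f_q^+$; if $x < 1-q$ then $(-(x+q)) \bmod 1 = (1-q) - x$, so $f_q^-(x) = \tfrac12 |x-(1-q)|^{-1/2}$ and $f_q \geq \tfrac12 f_q^-$. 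Since $w_q > 0$, this already shows $\limsup_{x \to 1-q} h^*(x) = \infty$ for every $q \in \QQ_1$, i.e. $h^*$ is unbounded near every rational point of $(0,1)$.

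Next I would verify that $h^*$ is lower semicontinuous on $(0,1)$. Each $f_q^{\pm}$ is continuous away from $1-q$ (the argument $(\pm(u+q)) \bmod 1$ is integer-valued only at $u=1-q$), while at $1-q$ its assigned value $1/2$ equals the one-sided limit from the side on which the wrapped argument tends to $1$, the limit from the other side being $+\infty$; hence $\liminf_{u \to 1-q} f_q^{\pm}(u) = 1/2 = f_q^{\pm}(1-q)$, so each $f_q$ is lower semicontinuous. A countable sum of nonnegative lower semicontinuous functions is lower semicontinuous (being the pointwise supremum of its partial sums), so $h^*$ is lower semicontinuous, and consequently $U_N := \{x \in (0,1) : h^*(x) > N\}$ is open for every $N \in \N$. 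Because $A \cap (0,1) = \bigcap_{N \in \N} U_N$, the set $A \cap (0,1)$ is $G_\delta$. Moreover each $U_N$ is dense: given a nonempty open interval $I \subseteq (0,1)$, pick $q \in \QQ_1$ with $1-q \in I$; by the bound above, $h^*(x) \geq w_q/(4\sqrt{|x-(1-q)|}) \to \infty$ as $x \to 1-q$, so some $x \in I \setminus \{1-q\}$ has $h^*(x) > N$, whence $U_N \cap I \neq \emptyset$.

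Finally, $A \cap (0,1) = \bigcap_{N} U_N$ is a countable intersection of dense open subsets of $(0,1)$, which is a Baire space (being, e.g., completely metrizable), so $A \cap (0,1)$ is comeager in $(0,1)$; since a countable subset of $(0,1)$ is meager (a countable union of nowhere-dense singletons) whereas a nonempty Baire space is not meager in itself, $A \cap (0,1)$ — and a fortiori $A$ — is uncountable. The argument presents no real obstacle: the only points demanding care are the case analysis unwinding $(\pm(x+q)) \bmod 1$ on either side of $1-q$ to obtain the lower bound, and the lower-semicontinuity check at the isolated point $1-q$ where $f_q$ is defined by hand; everything else is a direct application of Baire category.
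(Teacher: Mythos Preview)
Your proof is correct and follows essentially the same route as the paper: write $A$ (restricted to the open unit interval) as the intersection $\bigcap_N \{h^* > N\}$, show each superlevel set is open (via lower semicontinuity of $h^*$) and dense (via divergence of $w_q f_q$ near each rational), and invoke Baire category. Your treatment is slightly more explicit than the paper's --- you record the quantitative bound $f_q(x)\ge \tfrac{1}{4}|x-(1-q)|^{-1/2}$ and verify lower semicontinuity at the hand-defined point $1-q$ --- but the underlying argument is identical.
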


\begin{proof}
    For $k,m \in \N$, let $A_{k,m} := \{x \in [0,1]: h^*_m(x) > k\}$ and $A_k := \cup_{m \geq 1} A_{k,m} = \{x \in [0,1]: h^*(x) > k\}$, and note that $A = \cap_{k \geq 1} A_k$. First, observe that each function $w_q f_q$ is lower semi-continuous on $[0,1]$, which implies that the superlevel set $A_{k,m}$ is open, and therefore $A_k$ is a union of open sets, and hence also open in $[0,1]$. Next, for any non-empty interval $(a,b) \subseteq [0,1]$, there exists some $q' \in \Q_1 \cap (a,b)$. We have $h^*(x) > w_{q'}f^+_{q'}(x) \to \infty$ as $x \to q'$, and we can thus find $x' \in (a,b)$ such that $h^*(x') > k$; therefore, $x' \in A_k$, and so $(a,b) \cap A_k \neq \emptyset$. Thus $A_k$ is also dense in $[0,1]$.

    We have shown that $A$ is a countable intersection of open dense sets. Therefore, its complement $A^c$ is a countable union of closed nowhere dense sets. If $A$ were countable --- say $A = \{x_1, x_2, \ldots\}$ --- then $[0,1] =  A \cup A^c = \cup_{k \geq 1} (\{x_k\} \cup  A_k^c)$ would also be a countable union of closed nowhere dense sets (i.e., $[0,1]$ would be meagre), but this would contradict the Baire category theorem \citep[see, e.g.,][Theorem 3.6]{levyBasicSetTheory2002}. Hence $A$ is uncountable.
\end{proof}

Constructions of functions similar to $h^*$ are studied in real analysis \citep[see, e.g.,][Section 2.3]{follandRealAnalysisModern2013}. This is, of course, not the only way to construct a univariate density which is unbounded at uncountably many points. Given any uncountable set $D \subset [0,1]$ of measure zero --- of which the Cantor set is the classic example, although unlike $A$, it is nowhere dense in $(0,1)$ --- one can somewhat artificially define a function on $[0,1]$, such as $h(x) := 1/\One\{x \in D^c\}$, which is infinite on $D$ but remains constant at 1 on $D^c$. While any such function is a density in $\cF_{[0,1]}$, it produces only the standard uniform distribution upon integration, and hence its use as a generator simply returns $C^{\bs}_h = \Pi$ for any given signature $\bs$. More generally, one can produce a density in $\cF_{[0,1]}$ which is well-behaved on $D^c$ by choosing, say, some continuous $f \in \cF_{[0,1]}$ and then defining $h_f(x) := f/\One\{x \in D^c\}$. The copulas generated by such $h_f$ will not differ from the kinds discussed in \cref{sub:simpleexamples}; their copula densities $c^{\bs}_{h_f}$ will not exhibit divergent behavior anywhere. In contrast, the density of $C^{\bs}_{h^*}$ is fundamentally irregular, as it \emph{diverges} in any neighborhood: one can find a sequence converging to any point in $[0, 1]^d$ along which $c^{\bs}_{h^*}$ is finite but $c^{\bs}_{h^*} \to \infty$.

To avoid generators taking on the value $\infty$, we define 
\[
    f^*(x) := h^*(x) \One\{x \in A^c\}.
\]
While this modification restores finiteness at every point of $[0,1]$, it does not tame the underlying irregularity of $h^*$ in any open set, which carries on directly to the density $c^{\bs}_{f^*}$. Indeed, since the wrapped sum operation $\bu \mapsto \bigoplus_{j=1}^d \tu_j$ is an open map, the image of every open subset of $[0,1]^d$ under $c^{\bs}_{f^*}$ corresponds to the image of an open set of $[0,1]$ under $f^*$. Hence, while $c^{\bs}_{f^*}$ is everywhere finite, in any nonempty open $O \subset [0, 1]^d$ one can still find $\bu \in O$ with $c^{\bs}_{f^*}(\bu)$ arbitrarily large. In this sense, the density $c^{\bs}_{f^*}$ inherits the category-level pathology of its generator --- a concrete instance where integrability coexists with extreme pointwise divergence. 

Our construction showcases a fundamental decoupling between copulas and their densities. As we will discuss in \cref{sec:inference}, a certain degree of smoothness is required of any copula $C$ for elegant statistical properties to hold, particularly with regard to large sample theory, when observations are drawn from a distribution with copula $C$. We will show in \cref{sub:empiricalcopula} that \emph{all} copulas in $\cC$ --- including $C^{\bs}_{f^*}$ --- satisfy these properties. Thus, for the purposes of statistical inference, it can be restrictive to consider only copulas with ``traditional" densities which are typically continuous and locally bounded on $(0,1)^d$.

\section{Statistical inference}\label{sec:inference}

Throughout this section, $\bX_1, \dots, \bX_n$, $\bX_i := (X_{i1}, \dots, X_{id})$ represent iid observations of a random vector $\bX := (X_1, \dots, X_d)$ with continuous marginal cdfs $F_1, \dots, F_d$ and copula $C^{\bs}_f$. Therefore, $\bU := (F_1(X_1), \dots, F_d(X_d)) \sim C^{\bs}_f$. The generator $f \in \cF_{[0, 1]}$ and signature $\bs$ are arbitrary. The \emph{rank-based pseudo-observations} $\hat\bU_1, \dots, \hat\bU_n$ from the copula are defined by $\hat\bU_i := (\hat U_{i1}, \dots, \hat U_{id})$, where $\hat U_{ij} := \hat F_{nj}(U_{ij})$ and
\[
    \hat F_{nj}(u) := \frac{1}{n} \sum_{i=1}^n \Ind{U_{ij} \leq u}
\]
denotes the $j$th marginal empirical distribution function. Note that $\hat U_{ij}$ can also be defined in terms of the marginal ranks of the original sample $\bX_1, \dots, \bX_n$ as
\[
    \hat{U}_{ij} 
    =
    \frac{R_{ij}}{n},
\]
where $R_{ij}$ denotes the rank of $X_{ij}$ among the observations $X_{1j},\ldots,X_{nj}$ (in practice, one usually replaces the denominator $n$ with $n+1$ to force $\hat{U}_{ij} \in (0,1)$). Thus, the rank-based pseudo-observations can be computed even if the $U_{ij}$ themselves are not observed. Finally, we define the shorthand notation $\hat\bF_n(\bu) := (\hat F_{n1}(u_1), \dots, \hat F_{nd}(u_d))$ so that $\hat\bU_i = \hat\bF_n(\bU_i)$.

\subsection{The empirical copula}\label{sub:empiricalcopula}

The \emph{empirical copula} is the canonical nonparametric copula estimator, and is defined as an empirical distribution based on the pseudo-observations previously defined:
\[
    \hat C_n(\bu) := \frac{1}{n} \sum_{i=1}^n \Ind{\hat\bU_i \leq \bu}.
\]
It is well known that at a point $\bu \in (0, 1)^d$, the empirical copula $\hat C_n(\bu)$ is asymptotically normal if and only if the underlying true copula $C$ has continuous partial derivatives at $\bu$. In fact, \citet{segersAsymptoticsEmpiricalCopula2012} proved that the \emph{empirical copula process} $\sqrt{n}(\hat C_n - C)$ converges in distribution to a Gaussian process in $L^\infty([0, 1]^d)$ if and only if each partial derivative $\partial_j C$ exists and is continuous at points $\bu$ where $0 < u_j < 1$ (see Proposition 3.1 therein). The same condition appears in the study of empirical copula processes indexed by functions of bounded variation \citep[][Theorem 5]{radulovicWeakConvergenceEmpirical2017}. \citet{berghausWeakConvergenceEmpirical2017} show that in addition, if the second-order partial derivatives of $C$ also exist, are continuous, and satisfy a certain bound, then empirical copula process converges in distribution to a tight Gaussian process in weighted metrics, strengthening the result of \citet{segersAsymptoticsEmpiricalCopula2012}; see their Theorem 2.2.

\citet{segersEmpiricalBetaCopula2017} introduce smoothed versions of the empirical copula. They show that asymptotic normality of the so-called \emph{empirical beta copula} and \emph{empirical Bernstein copula} processes holds if the first-order partial derivatives of $C$ exist, are continuous everywhere, and satisfy a local Lipschitz property (see Theorem 3.6 therein). The latter holds, for instance, if the second-order partial derivatives exist and are bounded everywhere. \citet{berghausWeakConvergenceWeighted2018} further show that under the conditions imposed in \citet{berghausWeakConvergenceEmpirical2017}, the empirical beta copula process weakly converges in weighted metrics.

Our next result states that any copula in $\cC$ has \emph{continuous and bounded} first- and second-order partial derivatives, thus guaranteeing that all of the aforementioned results hold when the data is distributed according to such a copula. In particular, they hold for the copula $C^{\bs}_{f^*}$ with nowhere continuous and nowhere bounded density, as constructed in \cref{sub:univariate}. Interestingly, multiple copulas appear smoother than $C^{\bs}_{f^*}$ at first sight, but do not have continuous partial derivatives. For example, the \emph{checkerboard copula} on $[0, 1]^2$ has density $\bu \mapsto 2\One\{\bu \in [0, 1/2]^2 \cup [1/2, 1]^2\}$, which is bounded and almost everywhere continuous. However, continuity of its partial derivatives fails along the lines $\{1/2\} \times [0,1]$ and $[0,1] \times \{1/2\}$, as does the asymptotic normality of the associated empirical copula process.

\begin{prop} \label{prop:pd}
    
    For every $f \in \cF_{[0, 1]}$ and each $j \in \{1, \dots, d\}$, the $j$th partial derivative $\partial_j C^{\bs}_f$ of $C^{\bs}_f$ exists and is continuous on the set $\cV_j := \{\bu \in [0, 1]^d: 0 < u_j < 1\}$. If $d \geq 3$, then for each pair $(j,k) \in \{1, \dots, d\}^2$, the second-order partial derivative $\partial_j \partial_k C^{\bs}_f$ exists and is continuous on $\cV_j \cap \cV_k$, and $|\partial_j \partial_k C^{\bs}_f(\bu)| \leq 1$.
    
\end{prop}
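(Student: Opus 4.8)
The plan is to find explicit integral formulas for the partial derivatives of $C^{\bs}_f$ and to arrange every differentiation so that it passes only through a part of $C^{\bs}_f$ already regularized by integration, never through $f$ itself --- a precaution that matters because $f$ can be as badly behaved as the $f^*$ of \cref{sub:univariate}. Write $\tilde F$ for the periodized antiderivative $\tilde F(x):=\lfloor x\rfloor+F(x-\lfloor x\rfloor)$, which is continuous and nondecreasing on $\R$ with $\tilde F(x+1)-\tilde F(x)=1$; for $b\in[0,1]$ and $y\in\R$ the \emph{wrapped partial integral} $\int_0^b f(y\oplus v)\dif v$ equals $\tilde F(y+b)-\tilde F(y)$ (with the obvious modification, replacing $v$ by $1-v$, when the relevant signature entry is $1$), hence is jointly continuous in $(y,b)$ and lies in $[0,1]$. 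I would also isolate as a lemma the elementary fact that if $H$ is jointly continuous and bounded on $[0,1]^{d-1}\times[0,1]^d$ then $\bu\mapsto\int_{[\bzero,\bu_{-j}]}H(\bv_{-j},\bu)\dif\bv_{-j}$ is continuous, by dominated convergence applied to $\One\{\bv_{-j}\leq\bu_{-j}\}\,H(\bv_{-j},\bu)$ (whose indicator factor converges off a Lebesgue-null set).

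For the first-order claim, Fubini's theorem and the fundamental theorem of calculus give
\[
  \partial_j C^{\bs}_f(\bu)=\int_{[\bzero,\bu_{-j}]} f\Big(\tu_j\oplus\bigoplus_{l\neq j}\tv_l\Big)\dif\bv_{-j};
\]
the FTC step is legitimate because the inner integral is continuous in $v_j$, as one sees by integrating out a further coordinate $m\neq j$ (available since $d\geq2$) to replace the innermost integral of $f$ by the continuous, $[0,1]$-valued wrapped partial integral in $v_m$ and then invoking the lemma. The displayed expression depends on $\bu$ only through $(\tu_j,\bu_{-j})$ via a function that is jointly continuous (again by the lemma) and bounded by $1$; since $u_j\mapsto\tu_j$ is continuous, $\partial_j C^{\bs}_f$ is continuous on $\cV_j$.

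For the second-order claims take $d\geq3$ and fix $(j,k)$. If $j\neq k$, differentiating the formula for $\partial_k C^{\bs}_f$ in $u_j$ by the FTC --- the relevant integrand being continuous in $v_j$ after integrating out a coordinate $m\notin\{j,k\}$, which exists as $d\geq3$ --- yields
\[
  \partial_j\partial_k C^{\bs}_f(\bu)=\int_{[\bzero,\bu_{-j,k}]} f\Big(\tu_j\oplus\tu_k\oplus\bigoplus_{l\neq j,k}\tv_l\Big)\dif\bv_{-j,k},
\]
which is continuous by the lemma and obeys $|\partial_j\partial_k C^{\bs}_f(\bu)|\leq\int_{[0,1]^{d-2}}f(\,\cdot\,)\dif\bv=1$ by nonnegativity of $f$ and \cref{lem:Lebesguepushforward} (or a one-coordinate change of variables). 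If $j=k$, I would instead integrate $f$ over \emph{two} auxiliary coordinates, getting the doubly wrapped partial integral $\Xi(y,a,b):=\int_0^a\!\int_0^b f(y\oplus w_1\oplus w_2)\dif w_2\dif w_1 = \tilde F^{(-1)}(y{+}a{+}b)-\tilde F^{(-1)}(y{+}b)-\tilde F^{(-1)}(y{+}a)+\tilde F^{(-1)}(y)$, where $\tilde F^{(-1)}$ is an antiderivative of $\tilde F$. This $\Xi$ is $C^1$ in $y$, with $\partial_y\Xi(y,a,b)=[\tilde F(y{+}a{+}b)-\tilde F(y{+}b)]-[\tilde F(y{+}a)-\tilde F(y)]$ continuous and, since each bracketed difference lies in $[0,1]$, bounded by $1$ in absolute value; differentiating under the integral sign (licensed by this uniform bound) gives the analogous formula for $\partial_j^2 C^{\bs}_f$, continuous on $\cV_j$ and bounded by $\prod_{l\notin\{j,m_1,m_2\}}u_l\leq1$, where $m_1,m_2$ are the two auxiliary coordinates.

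The bookkeeping of signatures and wrapped sums is routine. The one delicate point --- and the structural reason $d\geq3$ is imposed --- is that each differentiation must meet only functions that a prior integration has smoothed: the mixed derivative integrates $f$ over the $d-2$ coordinates other than $j,k$ and needs at least one of them, while the pure second derivative integrates $f$ over the $d-1$ coordinates other than $j$ and needs at least two in order to produce the $C^1$-in-shift function $\Xi$. I expect the main obstacle to be stating the continuity lemma for moving integration boxes, together with a merely continuous (possibly unbounded) generator $f$, in a form clean enough to be applied verbatim at each step.
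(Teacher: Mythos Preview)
Your argument is correct and complete; the route differs from the paper's mainly in the device used to extract regularity from the wrapped-sum structure. The paper encapsulates everything in a single continuity lemma (\cref{lemm:continuity}): the partial integral $\int_{[\bzero,\bu_J]}c^{\bs}_f(\bv)\dif\bv_J$ is continuous in both $\bu_J$ and the unintegrated variables $\bv_{-J}$, and continuity in $\bv_{-J}$ is obtained by observing that a shift in an unintegrated coordinate is, via the wrapped sum, equivalent to a shift of the integration box. This same shift trick handles the pure second derivative: the paper converts $\partial_j C^{\bs}_f(\bu+\delta\be_j)$ into an integral with $v_k$-limits shifted by $\pm\delta$ for a single auxiliary $k\neq j$, and differentiates in the limits to get $\partial_j^2 C^{\bs}_f$ as a difference of two $(d-2)$-dimensional integrals of $c^{\bs}_f$, each in $[0,1]$.

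Your approach is more explicitly analytic: you introduce the periodized antiderivatives $\tilde F$ and $\tilde F^{(-1)}$, integrate out one (resp.\ two) auxiliary coordinates to obtain a continuous (resp.\ $C^1$) function of the shift, and then apply dominated convergence or Leibniz's rule directly. The trade-off is that the paper's shift-equivalence is a single structural observation reused verbatim at every stage, whereas your method gives concrete closed forms for the derivatives (and the slightly sharper bound $|\partial_j^2 C^{\bs}_f(\bu)|\le\prod_{l\notin\{j,m_1,m_2\}}u_l$) at the price of bespoke machinery for each level of smoothness. Both are perfectly valid implementations of the same underlying idea --- that integrating $f$ over even one coordinate already yields a continuous function --- and both correctly identify $d\ge 3$ as the threshold at which enough integration variables remain after two differentiations.
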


\begin{rem}
It is a standard fact that wherever they exist, the first-order partial derivatives of any copula are bounded by $1$. That is, for any copula $C$, we have $0 \leq \partial_j C \leq 1$ for all $j \in \{1,\ldots,d\}$, which follows directly from the monotonicity and 1-Lipschitz properties. However, the \emph{second-order} partial derivatives of $C$ --- assuming they exist --- are not bounded in general. Consider, for example, a trivariate extension of entry 4.2.9 in the well-known list of one-parameter Archimedean copulas found in Table 4.1 of \citet{nelsenIntroductionCopulas2006}, given by $C_{\theta}(\bu) := u_1 u_2 u_3 \exp\left(-\theta \log(u_1)\log(u_2)\log(u_3)\right)$ where $\theta \in (0,1]$. This is a valid copula when $\theta \leq (3-\sqrt{5})/2$, for then its Archimedean generator satisfies the $3$-monotonicity property \citep[][Theorem 2.2]{mcneilMultivariateArchimedeanCopulas2009}. An easy calculation shows that for fixed $u_2, u_3 > 0$,
\[
    \partial^2_{11} C_\theta(\bu) \propto u_1^{-1 - \theta\log(u_2)\log(u_3)}
\]
which is unbounded around $u_1=0$.
\end{rem}

The key to proving \cref{prop:pd} is the following lemma:

\begin{lemm} \label{lemm:continuity}
    
    Let $J \subseteq \{1, \dots, d\}$ be nonempty. Then for any $f \in \cF_{[0, 1]}$, any $\bs \in \{0,1\}^d$, and any values of $\bu_J \in \R^{|J|}$ and $\bv_{-J} \in \R^{d - |J|}$, the function
    \begin{equation}\label{eq:continuityofintegral}
        \int_{[0, \bu_J]} c^{\bs}_f(\bv) \dif \bv_J = \int_{[0, \bu_J]} f\left( \sum_{j \in J} \tv_j \oplus \sum_{k \notin J} \tv_k \right) \dif \bv_J
    \end{equation}
    is continuous, both in $\bu_J$ and in $\bv_{-J}$.
    
\end{lemm}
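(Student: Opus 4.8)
The right-hand side of \cref{eq:continuityofintegral} depends on $\bv_{-J}$ only through the single real number $c := \sum_{k\notin J}\tv_k$, and $\bv_{-J}\mapsto c$ is continuous (in fact affine). Hence it suffices to prove that
\[
    (\bu_J, c)\;\longmapsto\; g(\bu_J,c):=\int_{[0,\bu_J]} f\Bigl(\bigoplus_{j\in J}\tv_j \oplus c\Bigr)\dif\bv_J
\]
is \emph{jointly} continuous; the separate continuity in $\bu_J$ and in $\bv_{-J}$ asserted in the statement then follows a fortiori. I would establish this from two ingredients, stitched together by a triangle inequality. The guiding principle is that, because the generator $f$ may be badly unbounded (as with $f^*$ in \cref{sub:univariate}), everything must be phrased in $L^1$ rather than through pointwise or uniform bounds on $f$; the bridge back to one-dimensional integrals of $f$ itself is \cref{lem:Lebesguepushforward}, applied to the rank-one affine wrapped sum $\bv_J\mapsto\bigoplus_{j\in J}\tv_j$.

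\emph{Ingredient 1 (continuity in $\bu_J$ for fixed $c$).} For each fixed $c$, the integrand $\bv_J\mapsto f(\bigoplus_{j\in J}\tv_j\oplus c)$ is non-negative, measurable, and lies in $L^1([0,1]^{|J|})$ with norm exactly $1$: writing $\tv_j=(-1)^{s_j}v_j+s_j$, the map $\bv_J\mapsto\bigoplus_{j\in J}\tv_j$ is an affine wrapped sum whose matrix is the nonzero $1\times|J|$ row $((-1)^{s_j})_{j\in J}$, hence of rank $1$, so by \cref{lem:Lebesguepushforward} it pushes $\mathrm{Unif}([0,1]^{|J|})$ forward to $\mathrm{Unif}([0,1])$, and shift-invariance of Lebesgue measure on $\T$ gives $\int_{[0,1]^{|J|}} f(\bigoplus_{j\in J}\tv_j\oplus c)\dif\bv_J=\int_0^1 f=1$. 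Consequently, by absolute continuity of the Lebesgue integral, $\bu_J\mapsto g(\bu_J,c)$ is continuous: as $\bu_J'\to\bu_J$ the boxes $[0,\bu_J]$ and $[0,\bu_J']$ differ by a set of vanishing Lebesgue measure, and $\int_E|\phi|\to 0$ as $\lambda(E)\to 0$ for any $\phi\in L^1$.

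\emph{Ingredient 2 (control of the $c$-dependence, uniform in $\bu_J$).} For any $\bu_J$ and any $c,c'$,
\[
    \bigl|g(\bu_J,c')-g(\bu_J,c)\bigr|
    \;\le\;\int_{[0,1]^{|J|}}\Bigl|f\bigl(\textstyle\bigoplus_{j\in J}\tv_j\oplus c'\bigr)-f\bigl(\textstyle\bigoplus_{j\in J}\tv_j\oplus c\bigr)\Bigr|\dif\bv_J
    \;=\;\bigl\|f(\,\cdot\,\oplus(c'-c))-f\bigr\|_{L^1(\T)},
\]
where the last step again uses \cref{lem:Lebesguepushforward} to pass to the law of $W:=\bigoplus_{j\in J}\tv_j$, which is $\mathrm{Unif}([0,1])$, followed by the measure-preserving substitution $w\mapsto w\oplus c$ on $\T$. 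By the standard continuity of translation in $L^1$ of the circle group \citep{katznelsonIntroductionHarmonicAnalysis2004}, the right-hand side tends to $0$ as $c'\to c$, and the bound is free of $\bu_J$.

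Combining the two ingredients yields joint continuity: given $(\bu_J^{(n)},c_n)\to(\bu_J^{(0)},c_0)$, write $g(\bu_J^{(n)},c_n)-g(\bu_J^{(0)},c_0)=\bigl[g(\bu_J^{(n)},c_n)-g(\bu_J^{(n)},c_0)\bigr]+\bigl[g(\bu_J^{(n)},c_0)-g(\bu_J^{(0)},c_0)\bigr]$; the first bracket is at most $\|f(\,\cdot\,\oplus(c_n-c_0))-f\|_{L^1(\T)}\to 0$ by Ingredient 2, and the second $\to 0$ by Ingredient 1. There is no substantive obstacle here beyond this bookkeeping: the only genuinely delicate point is that $f$ need not be bounded, which is precisely why the proof lives in $L^1(\T)$ and why \cref{lem:Lebesguepushforward} must be invoked twice — once to place the composed integrand in $L^1$, and once to reduce the modular shift to an ordinary translation of $f$ on $\T$, where $L^1$-continuity of translation and absolute continuity of the integral can be applied.
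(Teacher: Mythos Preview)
Your proof is correct and close in spirit to the paper's, but handles the $\bv_{-J}$-direction differently. Both arguments use \cref{lem:Lebesguepushforward} to place the integrand in $L^1$ and then invoke absolute continuity of the Lebesgue integral for continuity in $\bu_J$. For continuity in $\bv_{-J}$, however, the paper observes that shifting $\bv_{-J}$ by $\bdelta_{-J}$ is algebraically the same as shifting a single chosen coordinate $v_{j^*}$ (with $j^*\in J$) by the scalar $\delta=\sum_{k\notin J}(-1)^{s_k}\delta_k$, which amounts to translating the \emph{limits of integration} in that coordinate; continuity in $\bv_{-J}$ is thereby reduced to the already-established continuity in $\bu_J$. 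You instead keep the domain fixed and push the shift into the argument of $f$, bounding the difference by $\|f(\cdot\oplus(c'-c))-f\|_{L^1(\T)}$ and invoking $L^1$-continuity of translation on the circle group. Your route yields a bound uniform in $\bu_J$ and hence joint continuity essentially for free; the paper's route is fully self-contained, needing no external translation-continuity result. One minor point: the lemma allows $\bu_J\in\R^{|J|}$, so your $L^1([0,1]^{|J|})$ membership should strictly be upgraded to $L^1_{\mathrm{loc}}(\R^{|J|})$ --- this follows immediately from the $1$-periodicity of the integrand in each $v_j$, and is exactly how the paper handles it (covering an arbitrary compact set by finitely many integer translates of the unit cube).
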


\begin{proof}

    Let $K \subset \R^d$ be compact and let $\bsigma := \left((-1)^{s_1}, \ldots, (-1)^{s_d}\right)^\top$. Choose a finite set $M \subset \Z^d$ with $K \subset \cup_{\bbm \in M} (\bbm + [0,1]^d)$, which is possible by compactness. By a change of variable, the fact that $\Psi_{\bsigma, \bzero}(\bv + \bbm) = \Psi_{\bsigma, \bzero}(\bv)$ for all $\bv \in [0,1]^d$ and $\bbm \in \Z^d$ (see \cref{sub:construction}), and \cref{lem:Lebesguepushforward}, we have that
    \[
        \int_K c^{\bs}_f(\bv) \dif \bv
        \leq
        \sum_{\bbm \in M} \int_{\bbm + [0,1]^d} f\left( \Psi_{\bsigma, \bzero}(\bv) \right) \dif \bv
        =
        \sum_{\bbm \in M} \int_{[0,1]^d} f\left( \Psi_{\bsigma, \bzero}(\bv) \right) \dif \bv
        = 
        \sum_{\bbm \in M} \int_0^1 f(x) \dif x 
        =
        |M|
        < \infty.
    \]
    Thus, when extended to a function on $\R^d$, $c^{\bs}_f$ is integrable on arbitrary compact sets.
    Continuity in $\bu_J$ of the function in \cref{eq:continuityofintegral} thus follows from continuity from above and below of Lebesgue measure.
    
    To establish continuity in the direction of $\bv_{-J}$, we show that an infinitesimal change in $\bv_{-J}$ is equivalent to an infinitesimal change in some component of $\bu_J$; we then simply apply the first part of the present result. Indeed, for a small vector $\bdelta \in \R^d$, adding $\bdelta_{-J}$ to $\bv_{-J}$ shifts $\sum_{j=1}^d \tv_j$ to
    \[
        \sum_{j \notin J} (\tv_j + \delta_j^{1-s_j} (1 - \delta_j)^{s_j}) + \sum_{j \in J} \tv_j = \sum_{j \neq j^*} \tv_j + (\tv_{j^*} + \delta),
    \]
    for some arbitrary $j^* \in J$ and for $\delta := \sum_{j \notin J} \delta_j^{1-s_j}(1-\delta_j)^{s_j}$. The integral of interest itself is then shifted to
    \begin{align*}
        \int_{[\bzero, \bu_J]} f\left( \sum_{j \notin J} (\tv_j + \delta_j^{1-s_j}(1-\delta_j)^{s_j}) \oplus \sum_{j \in J} \tv_j \right) \dif \bv_J 
        &= \int_{[\bzero, \bu_J]} f\left( \sum_{j \neq j^*} \tv_j \oplus (\tv_{j^*} + \delta) \right) \dif \bv_J
        \\
        &= \int_{[(-1)^{s_{j^*}} \delta \be_{j^*}, \bu_J + (-1)^{s_{j^*}} \delta \be_{j^*}]} c^{\bs}_f(\bv) \dif \bv_J
        \\
        &\too \int_{[\bzero, \bu_J]} c^{\bs}_f(\bv) \dif \bv_J
    \end{align*}
    as $\delta \to 0$, by the continuity established above.
\end{proof}

\begin{proof}[Proof of \cref{prop:pd}]We consider the first- and second-order partial derivatives separately.
    
    \noindent\textbf{First-order partial derivatives.} Let $j \in \{1,\ldots,d\}$ and $\bu \in \cV_j$. By Tonelli's theorem and the fundamental theorem of calculus,
    \begin{equation}\label{eq:FOPD}
        \partial_j C^{\bs}_f(\bu) 
        = 
        \frac{\partial}{\partial u_j} \int_0^{u_j} \int_{[\bzero, \bu_{-j}]} c^{\bs}_{f}(v_j, \bv_{-j}) \dif \bv_{-j} \dif v_j
        = 
        \int_{[\bzero, \bu_{-j}]} c^{\bs}_{f}(u_j, \bv_{-j}) \dif \bv_{-j},
    \end{equation}
    where we abuse notation and write $c^{\bs}_{f}(u_j, \bv_{-j})$ for the function $c^{\bs}_{f}$ applied to the vector $(u_j, \bv_{-j})$. Note that the application of the fundamental theorem of calculus in the second equality above requires continuity of the right-hand side in $u_j$. This is a consequence of \cref{lemm:continuity}, which also asserts that $\partial_j C^{\bs}_f$ is continuous at $\bu$.

    \noindent\textbf{Second-order partial derivatives.} Now suppose $d \geq 3$. Let $j,k \in \{1,\ldots,d\}$ with $j \neq k$. We treat $\partial_j \partial_k C^{\bs}_f$ and $\partial_j^2 C^{\bs}_f$ separately. Using the expression for $\partial_j C^{\bs}_f$ in \cref{eq:FOPD} and employing a similar abuse of notation,
    \[
        \partial_j \partial_k C^{\bs}_f(\bu) 
        = \frac{\partial}{\partial u_k} \int_0^{u_k} \int_{\left[\bzero, \bu_{-\{j, k\}} \right]} c^{\bs}_{f}(u_j, v_k, \bv_{-\{j, k\}}) \dif \bv_{-\{j, k\}} \dif v_k = \int_{\left[\bzero, \bu_{-\{j, k\}} \right]} c^{\bs}_{f}(u_j, u_k, \bv_{-\{j, k\}}) \dif \bv_{-\{j, k\}},
    \]
    which is continuous in $\bu$. As before, we use Tonelli's theorem, the fundamental theorem of calculus and \cref{lemm:continuity}.
    
    We now evaluate $\partial_j^2 C^{\bs}_f$. Let $\bu \in \cV_j$ and $\delta \in \R$. We have
    \begin{align*}
        \partial_j C^{\bs}_f(\bu + \delta \be_j) &= \int_0^{u_k} \int_{\left[\bzero, \bu_{-\{j, k\}} \right]} c^{\bs}_f(u_j + \delta, v_k, \bv_{-\{j, k\}}) \dif \bv_{-\{j, k\}} \dif v_k
        \\
        &= \int_0^{u_k} \int_{\left[\bzero, \bu_{-\{j, k\}} \right]} c^{\bs}_f(u_j, v_k + (-1)^{s_j + s_k} \delta, \bv_{-\{j, k\}}) \dif \bv_{-\{j, k\}} \dif v_k
        \\
        &= \int_{(-1)^{s_j + s_k} \delta}^{u_k + (-1)^{s_j + s_k} \delta} \int_{\left[\bzero, \bu_{-\{j, k\}} \right]} c^{\bs}_f(u_j, v_k, \bv_{-\{j, k\}}) \dif \bv_{-\{j, k\}} \dif v_k,
    \end{align*}
    where we note that the resulting integral is independent of the choice of $k$. In cases where $(-1)^{s_j + s_k} \delta < 0$ or $u_j + (-1)^{s_j + s_k} \delta > 1$, the function $c^{\bs}_f$ is extended outside $[0,1]^d$ as in the proof of \cref{lemm:continuity}. We deduce that $\partial_j^2 C^{\bs}_f(\bu)$ exists and is given by
    \[
        \lim_{\delta \to 0} \frac{\partial_j C^{\bs}_f(\bu + \delta \be_j) - \partial_j C^{\bs}_f(\bu)}{\delta}
        = (-1)^{s_j + s_k} \int_{\left[\bzero, \bu_{-\{j, k\}} \right]} \left\{c^{\bs}_f(u_j, u_k, \bv_{-\{j, k\}}) - c^{\bs}_f(u_j, 0, \bv_{-\{j, k\}}) \right\} \dif \bv_{-\{j, k\}},
    \]
    which is continuous in $\bu$ by \cref{lemm:continuity}. Once again note that this limit is independent of the choice of $k$. Now, putting $a := (-1)^{s_j}u_j \oplus (-1)^{s_k} u_k$ and $\bsigma := \left((-1)^{s_1}, \ldots, (-1)^{s_d}\right)^\top$, we have $c^{\bs}_f(u_j, u_k, \bv_{-\{j,k\}}) = f\left( \bPsi_{\bsigma_{-\{j,k\}},a}(\bv_{-\{j,k\}})\right)$. \cref{lem:Lebesguepushforward} then yields
    \[
        \int_{[0,1]^{d-2}} c^{\bs}_f(u_j, u_k, \bv_{-\{j,k\}}) \dif \bv_{-\{j,k\}}
        = 
        1,
    \]
    and so the function $\bv_{-\{j,k\}} \mapsto c^{\bs}_f(u_j, u_k, \bv_{-\{j,k\}})$ is a density on $[0,1]^{d-2}$. It follows that $\partial_j \partial_k C^{\bs}_f(\bu) \in [0,1]$ and $\partial_j^2 C^{\bs}_f(\bu) \in [-1,1]$, which completes the proof.
\end{proof}

\begin{rem}
In proving \cref{prop:pd}, we find that no matter how irregular $c^{\bs}_{f}$ is, as long as it is integrated with respect to at least one variable, the resulting antiderivative is continuous. Since differentiating the copula essentially removes one integral, the resulting derivative exists and is continuous as long as at least one integral remains. By iterating this process, one can show that all the partial derivatives of $C^{\bs}_f$ of order up to $d-1$ exist and that they are continuous and bounded on suitable domains.
\end{rem}

\subsection{Data-driven selection of the signature}\label{sub:sigselection}

We now illustrate how the signature $\bs$ can be learned consistently from data in a fully nonparametric and $f$-agnostic fashion. As noted in \cref{sub:construction}, in practice we can implicitly resolve the identifiability issue of $\bs$ by imposing $s_1 = 0$.

Since the random vector $\bU$ is distributed according to $C^{\bs}_f$, according to \cref{prop:basicproperties}, for any $\bt := (t_1, \dots, t_d) \in \{0, 1\}^d$ we have
\begin{equation}\label{eq:Ytdistribution}
    Y^{\bt} := \bigoplus_{j=1}^d U_j^{1-t_j} (1 - U_j)^{t_j} \sim
    \begin{cases}
        f, \quad &\bt \in \{\bs, \bone - \bs\}
        \\
        \stdunif, \quad &\bt \notin \{\bs, \bone - \bs\}
    \end{cases}.
\end{equation}
\cref{eq:Ytdistribution} then allows us to identify the signature as the unique $\bt$ for which $Y^{\bt}$ is \emph{not} uniformly distributed on $(0,1)$. This suggests a simple yet robust and fully nonparametric method to estimate the signature from the rank-based pseudo-observations $\hat\bU_1, \ldots, \hat\bU_n$: for every possible signature $\bt$, first compute the univariate pseudo-observations $\hat Y_1^{\bt}, \dots, \hat Y_n^{\bt}$ of the wrapped sum $Y^{\bt}$, where
\[
    \hat Y_i^{\bt} := \bigoplus_{j=1}^d \hat U_{ij}^{1-t_j} (1 - \hat U_{ij})^{t_j}, \quad i \in \{1,\ldots,n\},
\]
and then identify the signature $\bt$ for which the sample $\hat Y_1^{\bt}, \dots, \hat Y_n^{\bt}$ deviates most from the uniform distribution. We propose to measure this deviation using either a Kolmogorov--Smirnov or Cram\'er--von Mises distance, leading to the estimators
\[
    \hat\bs^{\rm{KS}}_n := \mathop{\argmax}_{\bt \in \{0, 1\}^d} \sup_{u \in [0, 1]} \big| \hat F_{n}^{\bt}(u) - u \big|
\]
and
\[
    \hat\bs^{\rm{CvM}}_n := \mathop{\argmax}_{\bt \in \{0, 1\}^d} \int_0^1 (\hat F_n^{\bt}(u) - u)^2 \dif u,
\]
respectively, where $\hat F_n^{\bt}$ denotes the empirical distribution function of the sample $\hat Y_1^{\bt}, \dots, \hat Y_n^{\bt}$. In practice, these estimators can be computed explicitly by using the known formulas
\[
    \sup_{u \in [0, 1]} \big| \hat F_{n}^{\bt}(u) - u \big| = \max_{i=1,\ldots,n} \max\left\{ \left| \frac{i-1}{n} - \hat Y_{(i)}^{\bt} \right|, \left| \frac{i}{n} - \hat Y_{(i)}^{\bt} \right| \right\}
\]
and
\[
    \int_0^1 \big(\hat F_n^{\bt}(u) - u\big)^2 \dif u = \frac{1}{n} \sum_{i=1}^n \left( \hat Y^{\bt}_{(i)} - \frac{i}{n} + \frac{1}{2n} \right)^2 + \frac{1}{12n^2}
\]
for the Kolmogorov--Smirnov and Cram\'er--von Mises test statistics respectively, where $\hat Y_{(i)}^{\bt}$ denotes the $i$th order statistic of the sample $(\hat Y_1^{\bt}, \dots, \hat Y_n^{\bt})$ \citep[see, e.g.,][Chapter 7.3]{davisonStatisticalModels2003}.

\begin{prop} \label{prop:signature}
    As long as $f$ is not the $\stdunif$ density, $\hat\bs^{\rm{KS}}_n$ and $\hat\bs^{\rm{CvM}}_n$ are consistent estimators of $\bs$.
\end{prop}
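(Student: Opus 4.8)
The plan is to reduce everything to one uniform, almost-sure convergence statement about a univariate empirical distribution function. For each candidate signature $\bt \in \{0,1\}^d$, let $F^{\bt}$ be the cdf of the population wrapped sum $Y^{\bt}$ of \cref{eq:Ytdistribution}; by \cref{prop:basicproperties}, $F^{\bt} = F$ when $\bt \in \{\bs,\bone-\bs\}$ and $F^{\bt}(u) = u$ otherwise. I claim that $\|\hat F_n^{\bt} - F^{\bt}\|_\infty \to 0$ almost surely, where $\hat F_n^{\bt}$ is the empirical cdf of the pseudo-observations $\hat Y_1^{\bt},\dots,\hat Y_n^{\bt}$. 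Granting this, both test statistics are Lipschitz functionals of $\hat F_n^{\bt}$ for the supremum norm, since $\bigl|\sup_{u}|\hat F_n^{\bt}(u)-u| - \sup_{u}|F^{\bt}(u)-u|\bigr| \le \|\hat F_n^{\bt}-F^{\bt}\|_\infty$ and $\bigl|\int_0^1(\hat F_n^{\bt}(u)-u)^2\dif u - \int_0^1(F^{\bt}(u)-u)^2\dif u\bigr| \le 2\|\hat F_n^{\bt}-F^{\bt}\|_\infty$, so each statistic converges almost surely to its population value. That value is $0$ for $\bt \notin \{\bs,\bone-\bs\}$, and it equals $\sup_u|F(u)-u| > 0$ (resp. $\int_0^1(F(u)-u)^2\dif u > 0$) for $\bt \in \{\bs,\bone-\bs\}$, the strict positivity holding because $f \neq 1$ forces the continuous function $F - \mathrm{id}$ to be nonzero on an open interval. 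As $\{0,1\}^d$ is finite, almost surely for all large $n$ the maximizing $\bt$ lies in $\{\bs,\bone-\bs\}$, and under the standing convention $s_1 = 0$ this pins down $\bs$; hence $\hat\bs^{\rm{KS}}_n$ and $\hat\bs^{\rm{CvM}}_n$ are consistent (almost surely, and therefore in probability).

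It remains to prove the claimed uniform convergence. Let $Y_i^{\bt} := \bigoplus_{j=1}^d U_{ij}^{1-t_j}(1-U_{ij})^{t_j}$ denote the (unobserved) exact wrapped sums and $\tilde F_n^{\bt}$ their empirical cdf; by the Glivenko--Cantelli theorem, $\|\tilde F_n^{\bt}-F^{\bt}\|_\infty \to 0$ almost surely. Since each marginal sample $U_{1j},\dots,U_{nj}$ is iid $\stdunif$, Glivenko--Cantelli also gives $\delta_n := d\max_{j}\sup_{u}|\hat F_{nj}(u)-u| \to 0$ almost surely, and because $u \mapsto u^{1-t}(1-u)^t$ and reduction modulo $1$ are each $1$-Lipschitz (the latter into $\T$), the points $\hat Y_i^{\bt}$ and $Y_i^{\bt}$ are within circle-distance $\delta_n$ for every $i$. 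Consequently $|\hat Y_i^{\bt}-Y_i^{\bt}| \le \delta_n$ as numbers in $[0,1)$ except when $Y_i^{\bt} \in [0,\delta_n)\cup(1-\delta_n,1)$, and the number $|B_n|$ of such ``seam'' indices obeys $|B_n|/n \le \tilde F_n^{\bt}(\delta_n) + 1 - \tilde F_n^{\bt}(1-\delta_n) \to 0$ almost surely, using $\|\tilde F_n^{\bt}-F^{\bt}\|_\infty \to 0$ and continuity of $F^{\bt}$ at $0$ and $1$. For the remaining indices the sandwich $\One\{Y_i^{\bt}\le u-\delta_n\} \le \One\{\hat Y_i^{\bt}\le u\} \le \One\{Y_i^{\bt}\le u+\delta_n\}$ holds, whence $\tilde F_n^{\bt}(u-\delta_n) - |B_n|/n \le \hat F_n^{\bt}(u) \le \tilde F_n^{\bt}(u+\delta_n) + |B_n|/n$ for all $u$; bounding $\tilde F_n^{\bt}(u\pm\delta_n)$ by $F^{\bt}(u\pm\delta_n) \pm \|\tilde F_n^{\bt}-F^{\bt}\|_\infty$ and invoking uniform continuity of $F^{\bt}$ on $[0,1]$ yields $\|\hat F_n^{\bt}-F^{\bt}\|_\infty \to 0$ almost surely, as required.

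The main obstacle is precisely this last step: the wrapped sum $\bu \mapsto \bigoplus_j \tu_j$ is discontinuous along the ``seam'' where its unwrapped value is an integer, so a pseudo-observation $\hat Y_i^{\bt}$ may be close to $Y_i^{\bt}$ on the circle yet nearly a full unit away as a point of $[0,1)$, which would defeat a naive perturbation bound. The fix is to discard the $O(n\delta_n)$ observations whose exact value lies within $\delta_n$ of the seam --- an asymptotically negligible fraction, by continuity of $F^{\bt}$ at the endpoints --- and then run the standard empirical-cdf sandwiching argument on the rest. Everything else (Lipschitzness of the two functionals, strict positivity of the population gaps when $f \neq 1$, and the finite-set argmax step) is routine.
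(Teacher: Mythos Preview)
Your proof is correct and follows the same overall architecture as the paper's: reduce to the uniform convergence $\|\hat F_n^{\bt} - F^{\bt}\|_\infty \to 0$, then use continuity of the KS/CvM functionals together with strict positivity of their population values at $\bt \in \{\bs,\bone-\bs\}$ when $f \neq 1$.

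The execution of the key uniform-convergence step is organized differently, though. The paper stays in $[0,1]^d$: it writes $\hat F_n^{\bt}(u)$ as the empirical \emph{copula} measure of the fixed set $A_u^{\bt} = \{\bu:\bigoplus_j u_j^{1-t_j}(1-u_j)^{t_j} \le u\}$, rewrites this as the ordinary empirical measure of a randomly perturbed set $\hat A_u^{\bt} = \hat\bF_n^{-1}(A_u^{\bt})$, and then bounds $P_n(A_u^{\bt} \bigtriangleup \hat A_u^{\bt})$ by the empirical mass within $\epsilon_n$ of the boundary $\partial A_u^{\bt}$, which collapses to a one-dimensional Glivenko--Cantelli statement. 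You instead project to $[0,1]$ immediately, control the \emph{circle} distance between each $\hat Y_i^{\bt}$ and $Y_i^{\bt}$ by a coordinatewise Lipschitz bound, discard the $o(n)$ indices whose $Y_i^{\bt}$ lies within $\delta_n$ of the seam, and finish with the standard $u\pm\delta_n$ sandwich on the remaining indices. The two arguments address exactly the same pair of obstructions --- the wrap-around at $0$/$1$ and continuity at the level $u$ --- but yours is more elementary (no random sets or symmetric-difference bookkeeping) and delivers almost-sure convergence; the paper states only convergence in probability, though its Glivenko--Cantelli ingredients would yield a.s.\ as well.
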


\begin{proof}
    For every $\bt \in \{0,1\}^d$ and $u \in [0,1]$, let $A_u^{\bt} = \{\bu \in [0,1]^d: \bigoplus_{j=1}^d u_j^{1-t_j} (1-u_j)^{t_j} \leq u\}$. Define the empirical copula measure, empirical measure and copula measure by
    \[
        \hat P_n(A) := \frac{1}{n} \sum_{i=1}^n \delta_{\hat \bU_i}(A), 
        \quad 
        P_n(A) := \frac{1}{n} \sum_{i=1}^n \delta_{\bU_i}(A), \quad 
        \text{and} \quad
        P(A) := \Prob(\bU \in A),
    \]
    respectively. Notice that $\hat P_n(A_u^{\bt}) = \hat F_n^{\bt}(u)$, and similarly
    \[
        P(A_u^{\bt}) = F^{\bt}(u) :=
        \begin{cases}
            F(u), \quad &\bt \in \{\bs, \bone - \bs\}
            \\
            u, \quad &\bt \notin \{\bs, \bone - \bs\}
        \end{cases},
    \]
    which is the cdf of $Y^{\bt}$ by \cref{prop:basicproperties}. Therefore,
    \[
        |\hat F_n^{\bt}(u) - F^{\bt}(u)| = |\hat P_n(A_u^{\bt}) - P(A_u^{\bt})| = |P_n(\hat A_u^{\bt}) - P(A_u^{\bt})|,
    \]
    where we define the randomly perturbed set
    \[
        \hat A_u^{\bt} := \{\bu \in [0,1]^d: \hat\bF_n(\bu) \in A_u^{\bt}\}.
    \]
    Thus,
    \begin{equation} \label{eq:absdiff}
        |\hat F_n^{\bt}(u) - F^{\bt}(u)| \leq |P_n(\hat A_u^{\bt}) - P_n(A_u^{\bt})| + |P_n(A_u^{\bt}) - P(A_u^{\bt})|.
    \end{equation}

    In order to bound the first term on the left hand side of \cref{eq:absdiff}, we analyze the symmetric difference $A_u^{\bt} \bigtriangleup \hat A_u^{\bt}$. A point $\bu$ can belong to this set either because $\bu \in A_u^{\bt}$ and the transformation by $\hat\bF_n$ makes it exit $A_u^{\bt}$, or because $\bu \notin A_u^{\bt}$ and the transformation by $\hat\bF_n$ makes it enter $A_u^{\bt}$. Neither scenario is possible unless $\bu$ is within Euclidean distance $\epsilon_n := \sup_{\bv \in [0,1]^d} \|\hat\bF_n(\bv) - \bv\|_2$ from the boundary of $A_u^{\bt}$. Since $\partial A_u^{\bt} = \{\bu \in [0,1]^d: \bigoplus_{j=1}^d u_j^{1-t_j} (1 - u_j)^{t_j} \in \{0, u\}\}$, we thus have
    \begin{align*}
        |P_n(\hat A_u^{\bt}) - P_n(A_u^{\bt})|
        &\leq P_n(A_u^{\bt} \bigtriangleup \hat A_u^{\bt})
        \\
        &\leq P_n\Big( \Big\{ \bu \in [0,1]^d: \bigoplus_{j=1}^d u_j^{1-t_j} (1 - u_j)^{t_j} \in [0, \epsilon_n] \cup [u-\epsilon_n, u+\epsilon_n] \cup [1-\epsilon_n, 1] \Big\} \Big)
        \\
        &= P_n^{\bt}([0, \epsilon_n] \cup [u-\epsilon_n, u+\epsilon_n] \cup [1-\epsilon_n, 1]),
    \end{align*}
    where $P_n^{\bt}$ is the empirical measure of the (unobservable) independent copies
    \begin{equation*}
        Y_i^{\bt} := \bigoplus_{j=1}^d U_{ij}^{1-t_j} (1 - U_{ij})^{t_j}, \quad i \in \{1,\ldots,n\}
    \end{equation*}
    of $Y^{\bt}$. By the Glivenko--Cantelli theorem, this empirical measure converges in probability to the distribution of $Y^{\bt}$ uniformly over all intervals of $[0,1]$. Therefore,
    \[
        P_n^{\bt}([0, \epsilon_n] \cup [u-\epsilon_n, u+\epsilon_n] \cup [1-\epsilon_n, 1]) = F^{\bt}(\epsilon_n) + (F^{\bt}(u+\epsilon_n) - F^{\bt}(u-\epsilon_n)) + (1 - F^{\bt}(1-\epsilon_n)) + \op(1).
    \]
    By the multivariate Glivenko--Cantelli theorem, $\epsilon_n \to 0$ in probability. By continuity of $F^{\bt}$ --- which is either $F$ itself or the identity on $[0,1]$ --- we conclude that the first term on the right in \cref{eq:absdiff} vanishes in probability uniformly in $u$. The second term can be written as
    \[
        |P_n^{\bt}([0,u]) - F^{\bt}(u)| = \op(1)
    \]
    uniformly in $u$, again by the standard Glivenko--Cantelli theorem. We have therefore established that for each $\bt \in \{0,1\}^d$, as $n \to \infty$,
    \begin{equation} \label{eq:GC}
        \sup_{u \in [0,1]} |\hat F_n^{\bt}(u) - F^{\bt}(u)| \too 0
    \end{equation}
    in probability; that is, $\hat F_n^{\bt} \to F^{\bt}$ uniformly in probability.

    To conclude, note that both signature estimators are of the form
    \[
        \mathop{\argmax}_{\bt \in \{0, 1\}^d} Q(\hat F_n^{\bt}),
    \]
    where the functional $Q$ mapping the set of distribution functions on $[0,1]$ to $[0, \infty)$ is defined by either
    \[
        Q(H) := \sup_{u \in [0,1]} |H(u) - u| \quad \text{or} \quad Q(H) := \int_0^1 (H(u) - u)^2 \dif u.
    \]
    Since in both cases the functional $Q$ has a unique zero at the identity function and is continuous in the topology of uniform convergence, the continuous mapping theorem ensures that
    \[
        Q(\hat F_n^{\bt}) \too
        \begin{cases}
            Q(F), \quad &\bt \in \{\bs, \bone - \bs\}
            \\
            0, \quad &\bt \notin \{\bs, \bone - \bs\}
        \end{cases}.
    \]
    in probability. Because $F$ is assumed not to be the identity function on $[0,1]$, we conclude that as $n \to \infty$,
    \[
        \Prob(\hat\bs_n \in \{\bs, \bone-\bs\}) \too 1
    \]
    where $\hat\bs_n$ stands for either $\hat\bs^{\rm{KS}}_n$ or $\hat\bs^{\rm{CvM}}_n$.
\end{proof}

\begin{rem}
Our proof reveals that \emph{any} estimator of the signature based on maximizing a functional of $\hat F_n^{\bt}$ that is continuous and uniquely minimized at the identity function is consistent. For example, one can easily implement an estimator based on the Anderson--Darling test statistic which follows the same recipe. We leave a thorough comparison of these tests for potential future work.

A natural alternative approach to signature selection exploits the partial exchangeability properties implied by the signature. Specifically, if $\bU \sim C_f^{\bs}$ and $s_j = s_k$ for some $j \neq k$, then swapping $U_j$ and $U_k$ leaves the distribution unchanged; that is, $(U_1, \ldots, U_{j-1}, U_k, U_{j+1}, \ldots, U_{k-1}, U_j, U_{k+1}, \ldots, U_d) \sim C_f^{\bs}$ as well. This suggests a selection procedure based on testing whether such pairs are exchangeable using two-sample versions of the Kolmogorov--Smirnov or Cram\'er--von Mises tests, wherein one could, in principle, select the signature based on the resulting $p$-values. Since there are only $d(d-1)/2$ such swaps, one might expect this method to be more scalable than the uniform comparison method described above, which exhaustively examines all $2^{d-1}$ possible signatures. However, in our experiments, the need to compute the empirical copula for each candidate swap created a significant computational bottleneck, making this method far slower than our exhaustive method for all dimensions $d \leq 10$ tested. More critically, its accuracy in recovering the true signature was consistently and substantially worse than that of the uniform comparison method.
\end{rem}

For the remainder of \cref{sec:inference}, inference for the generator $f$ is discussed under the assumption that the true signature $\bs$ is known or has been learned.

\subsection{Parametric estimation of the generator}\label{sub:parametric}

One appealing feature of the class $\cC$ is that likelihood inference is straightforward for any subclass of copulas whose generators arise from a parametric family, assuming the signature is known. This contrasts with the case for general parametric copulas, where the computational cost of exact inference typically scales poorly with dimension, mainly because computing score functions is difficult \citep{hofertLikelihoodInferenceArchimedean2012}. To be more specific, suppose $\bU_1, \bU_2, \ldots, \bU_n \iid C^{\bs}_{f_\theta}$, where $f_\btheta$ belongs to some parametric family $\cF_\Theta := \{f_\btheta: \btheta \in \Theta\} \subset \cF_{[0, 1]}$. Writing $Y_i = \bigoplus_{j=1}^d \tU_{ij}$, the log-likelihood function of $\btheta$ is given by
\begin{equation}\label{eq:loglikelihood}
    \ell(\btheta \mid \bU_1, \bU_2, \ldots, \bU_n) 
    = 
    \sum_{i=1}^n \log{c^{\bs}_{f_\btheta}(\bU_i)} 
    =
    \sum_{i=1}^n \log{f_\btheta(Y_i)},
\end{equation}
which is simply the standard log-likelihood based on the iid observations $Y_1, \ldots, Y_n$ from $f_\btheta$, which follows from \cref{prop:basicproperties}. It follows immediately that the maximum likelihood estimator (MLE) $\hat{\btheta}^{(n)}_\mathrm{MLE}$ of $\btheta$ based on $\bU_1, \bU_2, \ldots, \bU_n$ as observations from $C_{f_\btheta}^{\bs}$ coincides with the MLE of $\btheta$ based on $Y_1, \ldots, Y_n$ as observations from $f_\btheta$.

In general, there is a one-to-one correspondence between statistics based on $\bU_1, \dots, \bU_n$ and statistics based on the wrapped sums $Y_1, \ldots, Y_n$, due to the facts that $Y_1, \ldots, Y_n$ behave as iid observations from $f_\btheta$ and that for a fixed signature $\bs$, the generator $f$ uniquely determines $C_f^{\bs}$ by \cref{prop:Ccharacteriation}. This correspondence naturally preserves classical properties such as sufficiency, minimal sufficiency, and completeness, as well as asymptotic normality and efficiency of the MLE. Thus, for example,
\[
    \sqrt{n}\big(\hat{\btheta}^{(n)}_\mathrm{MLE} - \btheta_0 \big) \stackrel{d}{\longrightarrow} \cN\left(\bzero, \boldsymbol{\pazocal{I}}^{-1}(\btheta_0)\right)
\]
under suitable regularity conditions on the statistical model $\cF_\Theta$ (such as identifiability, smoothness, and finiteness of the Fisher information matrix $\boldsymbol{\pazocal{I}}(\btheta_0)$), where $\btheta_0$ is the true data-generating parameter. In a similar vein, hypothesis tests and confidence intervals based on the univariate observations $Y_1, \dots, Y_n$ carry directly over to those based on the multivariate copula observations, as does Bayesian inference on $\btheta$.

In practice, however, the margins are rarely known \emph{a priori}, so two broad strategies arise depending on whether one is willing to model them parametrically. When a fully parametric model is adopted for the margins as well, a natural two–step procedure is the \emph{inference functions for margins} approach of \citet{joe1996estimation}. In the first step, the marginal parameters $\bpsi_1, \ldots, \bpsi_d$ are estimated separately by maximizing the univariate log-likelihoods, yielding the marginal MLEs $\hat{\bpsi}_1, \ldots, \hat{\bpsi}_d$, and the plug-in pseudo-observations $\breve{U}_{ij} := F_{h, \hat{\bpsi}_j}(X_{ij})$ are formed using the fitted marginal cdfs $F_{h, \hat{\bpsi}_j}$. In the second step, \cref{eq:loglikelihood} is maximized --- with $\breve{\bU}_i := (\breve{U}_{i1}, \ldots, \breve{U}_{id})$ in place of $\bU_i$ --- to obtain the copula parameter estimator $\hat{\btheta}^{(n)}_\mathrm{IFM}$. Under standard regularity conditions \citep{joe1996estimation,joe2014dependence} the estimator $\hat{\btheta}^{(n)}_\mathrm{IFM}$ is $\sqrt{n}$-consistent and asymptotically normal. 

Alternatively, if one prefers to remain agnostic about the marginal laws, the classical alternative is to replace $\bU_1, \ldots, \bU_n$ with the rank-based pseudo-observations $\hat{\bU}_1, \ldots, \hat{\bU}_n$. Plugging these into \cref{eq:loglikelihood} yields the pseudo-log-likelihood, whose maximizer $\hat{\btheta}^{(n)}_\mathrm{PL}$ remains $\sqrt{n}$-consistent and asymptotically normal \citep{genestSemiparametricEstimationProcedure1995}. Because it relies only on ranks, the estimator $\hat{\btheta}^{(n)}_\mathrm{PL}$ remains consistent for the copula parameter under any continuous marginals, making it a robust alternative when one is unwilling or unable to model the univariate margins.

\subsection{Nonparametric estimation of the generator}\label{sub:nonpara}

\cref{sub:parametric} demonstrates a significant advantage of our framework: in parametric inference, inference for $C_f^{\bs}$ reduces to a more tractable, one-dimensional problem involving the generator $f$. This advantage also carries over to nonparametric estimation. In a scenario in which the marginal distributions of the data are known \emph{a priori}, we can compute the ``true'' wrapped sum $Y_i$ for each observation $\bU_i$. The task of estimating $f$ nonparametrically then becomes a standard univariate kernel density estimation procedure based on the iid sample $Y_1, \ldots, Y_n$. In this case, classical asymptotic results for kernel density estimation apply directly, such as establishing the optimal bandwidth and the asymptotic normality of the estimator \citep[see, e.g.,][]{wandKernelSmoothing1994}.

In the more common (and substantially more challenging) situation, the margins are unknown. In this case, we must rely on the rank-based pseudo-observations $\hat Y_1, \dots, \hat Y_n$, which we recall are defined as
\begin{equation} \label{eq:Yihat}
    \hat Y_i := \bigoplus_{j=1}^d \hat U_{ij}^{1-s_j} (1 - \hat U_{ij})^{s_j}, \quad i \in \{1,\ldots,n\}.
\end{equation}
Given a kernel $K$ and some bandwidth $h_n > 0$, the fully nonparametric kernel density estimator (KDE) of $f$ based on this sample is
\[
    \hat f_n(x) := \frac{1}{n h_n} \sum_{i=1}^n K\Big( \frac{x - \hat Y_i}{h_n} \Big), \quad x \in (0, 1).
\]

In order to analyze the asymptotic behavior of $\hat f_n$, we note that
\begin{equation} \label{eq:kde}
    \hat f_n(x) = \frac{1}{n h_n} \sum_{i=1}^n K\Big( \frac{x - \hat F_n^{-1}(i/n)}{h_n} \Big),
\end{equation}
where
\[
    \hat F_n(u) := \frac{1}{n} \sum_{i=1}^n \One\{\hat Y_i \leq u\}, \quad u \in [0, 1],
\]
is the empirical distribution function based on the pseudo-observations and $\hat F_n^{-1}(y) := \inf\{u \in [0, 1]: \hat F_n(u) > y\}$ is its generalized inverse; by convention, we set $\hat F_n^{-1}(1) = 1$.

Analyzing the asymptotic behavior of \cref{eq:kde} requires results from the theory of empirical processes. The key difficulty is that the pseudo-observations $\hat Y_1, \ldots, \hat Y_n$ are not independent, since each $\hat Y_i$ depends on the full set of empirical marginal distributions of the original sample. The standard asymptotic analysis of the kernel density estimator, which relies on the weak law of large numbers and the Lindeberg--Feller central limit theorem, therefore does not apply. We state asymptotic properties of the estimator under a high-level conjecture concerning the asymptotic distribution of $\hat F_n$, and make some remarks on the state of this conjecture below.

\begin{conj} \label{conj:ecdf}
    As $n \to \infty$, the empirical process $B_n := \sqrt{n} (\hat F_n - F)$ converges weakly in $L^\infty([0, 1])$ to $B := W \circ F + V$, where $W$ is a standard Wiener process and $V$ is a stochastic process on $[0,1]$ whose sample paths are almost surely locally Lipschitz continuous at $x$. Furthermore, on a possibly enriched probability space containing the data as well as $B$, there exists $\alpha>0$ such that $\|B_n - B\|_\infty = \Op(n^{-\frac{1+\alpha}{10}})$.
\end{conj}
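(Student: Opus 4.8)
The plan is to recast $\hat F_n$ as an empirical--copula functional evaluated on a one-parameter family of sets, and then to transport known strong-approximation results for multivariate empirical processes through the rank transformation.

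First I would observe that $\hat F_n(x) = \hat P_n(A_x)$, where $\hat P_n := n^{-1}\sum_{i=1}^n \delta_{\hat\bU_i}$ is the empirical copula measure (as in the proof of \cref{prop:signature}) and $A_x := \{\bu \in [0,1]^d : \bigoplus_{j=1}^d \tu_j \le x\}$, with $P(A_x) = \Prob(Y^{\bs} \le x) = F(x)$ by Part~\ref{res:fandPi} of \cref{prop:basicproperties}. The topological boundary of $A_x$ lies in a union of affine hyperplanes whose number and orientations do not depend on $x$, so $\{\One_{A_x} : x \in [0,1]\}$ is a uniformly bounded family of functions of uniformly bounded Hardy--Krause variation on $[0,1]^d$. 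This places $x \mapsto \hat F_n(x) = \int \One_{A_x}\,\dif\hat P_n$ within the weak-convergence theory for empirical copula processes indexed by functions of bounded variation \citep[][Theorem~5]{radulovicWeakConvergenceEmpirical2017}, whose structural hypothesis --- continuity of the first-order partial derivatives of $C_f^{\bs}$ off the faces $\{u_j \in \{0,1\}\}$, together with mild conditions on the index class met here by bounded variation --- is exactly the content of \cref{prop:pd}. This already yields $B_n = \sqrt{n}(\hat F_n - F) \rightsquigarrow B$ in $L^\infty([0,1])$, where $B$ is a centered Gaussian process admitting the familiar decomposition: a $C$-Brownian-bridge term $x \mapsto \mathbb{B}_C(\One_{A_x})$, plus $d$ rank-correction terms, the $j$th being a Brownian bridge in the $j$th margin integrated (via integration by parts) against the signed measure generated by $u_j \mapsto \partial_j C_f^{\bs}$.

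Next I would reconcile $B$ with the form $W \circ F + V$ demanded by the conjecture. The leading term has covariance $\mathrm{Cov}\big(\mathbb{B}_C(\One_{A_x}), \mathbb{B}_C(\One_{A_y})\big) = F(x\wedge y) - F(x)F(y)$, i.e.\ that of an $F$-Brownian bridge, which on a suitably enriched space can be realized as $W\circ F - F(\cdot)\,W(1)$ for a standard Wiener process $W$; I would then take $V$ to be the sum of $-F(\cdot)W(1)$ and the $d$ rank-correction terms. What remains is to verify that the sample paths of $V$ are a.s.\ locally Lipschitz near the evaluation point $x$. The rough factor $-F(\cdot)W(1)$ is locally Lipschitz wherever $F$ is, which (under the local boundedness of $f$ near $x$ that one would in any case assume for consistent kernel estimation of $f(x)$) is automatic; for each rank-correction term, local Lipschitz continuity near $x$ would follow from the modulus of continuity of Brownian motion together with a \emph{local} bounded-variation estimate for $y \mapsto \partial_j C_f^{\bs}$. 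Since \cref{prop:pd} gives only continuity of these partials, this step needs a modest refinement of \cref{lemm:continuity} --- essentially a quantitative (locally Lipschitz, or locally absolutely continuous) version of the continuity of the partial antiderivatives of $c_f^{\bs}$ in the single remaining free coordinate. I regard this as a secondary obstacle.

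The main obstacle is the quantitative approximation $\|B_n - B\|_\infty = \Op(n^{-(1+\alpha)/10})$. I would proceed via: (i) a Koml\'{o}s--Major--Tusn\'{a}dy / Rio-type coupling for the $d$-dimensional uniform empirical process $\sqrt{n}\big(n^{-1}\sum_i \One\{\bU_i \le \cdot\} - C\big)$ at a polynomial rate, valid under the smoothness furnished by \cref{prop:pd} (with the exponent degrading in $d$); (ii) a simultaneous coupling, on the same space, of the $d$ univariate marginal empirical processes at the near-parametric rate $\Op(n^{-1/2}\log n)$; (iii) transporting these couplings through the Hadamard-differentiable map carrying the joint and marginal processes to the pushforward c.d.f.\ process, by a rate-retaining (``uniform'') functional delta method, which in turn requires a quantitative modulus-of-continuity bound for the empirical copula process over the index family $\{A_x\}$ --- obtainable from (i) plus a chaining argument exploiting the bounded variation established above; and (iv) bounding the delta-method remainder using $\epsilon_n := \sup_{\bv\in[0,1]^d}\|\hat\bF_n(\bv) - \bv\|_2 = \Op\big((\log n/n)^{1/2}\big)$ together with the Lipschitz estimate of the previous paragraph. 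The exponent $1/10$ is surely far from optimal; it reflects conservative bookkeeping through the multivariate coupling and is calibrated to the downstream kernel-density analysis. I expect steps (iii)--(iv) --- threading \emph{some} polynomial coupling rate through the rank transformation while keeping the index family $\{A_x\}$ under uniform control --- to be where the real difficulty lies, which is presumably why the statement is posed as a conjecture rather than a theorem.
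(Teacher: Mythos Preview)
The paper does not prove this statement: it is explicitly posed as a conjecture and used as a standing assumption for \cref{prop:kde}. What the paper offers in lieu of a proof is the remark immediately following the conjecture, which (i) explains why the known-margins case gives the $F$-Brownian bridge $W\circ F - W(1)F$ at the KMT rate, and (ii) asserts, on the basis of \emph{preliminary unpublished work} by one of the authors with B\"{u}cher, Segers and Volgushev on set-indexed empirical copula processes, that in the rank-based case the limit takes the specific form $B = W\circ F - \xi f$ for a single centered normal random variable $\xi$ (independent of $x$), with the rate then read off from that work combined with the univariate KMT bound. So the paper's ``argument'' is a pointer to forthcoming results, not a self-contained proof.

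Your plan is broadly the same route the paper gestures at --- reduce to the empirical copula process indexed by the sets $A_x$ and invoke weak convergence plus a strong approximation --- and you correctly flag the quantitative coupling through the rank map as the crux. Two points of comparison are worth noting. First, the paper's claimed limit has $V = -\xi f$, a single Gaussian scalar times the generator; your decomposition into $-W(1)F$ plus $d$ separate rank-correction integrals is structurally correct at the level of the general empirical-copula limit, but you have not shown that in this particular family these pieces collapse to something as simple as $-\xi f$. That simplification presumably exploits the special geometry of the sets $A_x$ (their boundaries are unions of parallel hyperplanes) together with the explicit form of $\partial_j C_f^{\bs}$ from \cref{eq:partialjC}, and carrying it out would sharpen your Lipschitz verification for $V$ considerably --- local Lipschitz continuity of $\xi f$ at $x$ is immediate once $f$ is assumed differentiable there, whereas your route requires the extra ``quantitative \cref{lemm:continuity}'' you mention. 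Second, for the rate, the paper defers entirely to the external work rather than attempting the KMT/Rio + delta-method program you sketch; your steps (iii)--(iv) are exactly where the paper declines to commit, which matches your own assessment that this is the real obstacle.
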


\begin{rem}
    In addition to the uniform asymptotic normality of the empirical process $B_n$ --- a standard property in asymptotic statistics --- \cref{conj:ecdf} asserts both a certain structure for the limiting Gaussian process, and a bound on the rate of convergence. We provide some comments on these two assertions:
    \begin{enumerate}
        
        \item The assumption on $V$ can be reformulated as the existence of a random variable $\Delta \in (0, \min\{x, 1-x\}]$ such that $V(y) - V(x - \Delta) = \int_{x-\Delta}^y V'(z) \dif z$ for every $y \in [x - \Delta, x + \Delta]$, where $V'$ is a stochastic process on $[0,1]$ whose sample paths are almost surely bounded \citep[see, e.g.,][Section 5.8, Theorem 4]{evansPartialDifferentialEquations2010}.

        Note that we perform stochastic calculus (by applying the stochastic integration by parts formula and the It\^{o} isometry) only on $W$, not on $V$. We therefore do not require that $V$ be a (semi-)martingale, nor that it be adapted to the natural filtration of $W$.

        \item Suppose the marginal distributions were known, so that instead of the pseudo-observations $\hat Y_i$ in \cref{eq:Yihat}, we had exact iid observations $Y_i \sim f$, obtained by replacing $\hat U_{ij}$ by $U_{ij}$. The empirical process in \cref{conj:ecdf} would then be a standard univariate empirical process, converging weakly at a rate of $n^{-1/2}$ (up to a logarithmic factor --- see \citealp{komlosApproximationPartialSums1975} and \citealp{bretagnolleHungarianConstructionsNonasymptotic1989}) to the $F$-Brownian bridge $W \circ F - W(1) F$. Because $F$ is differentiable at $x$, this Gaussian process naturally satisfies our assumption. Note that the Lipschitz part $W(1) F$ is not an adapted process.

        We can use the lens of empirical copula processes to analyze the effect of replacing the exact marginal distributions in the definition of $Y_i$ by empirical ones to obtain $\hat Y_i$. Indeed, by the arguments in \cref{sub:sigselection}, $B_n$ can be expressed as the empirical copula process (based on the rank-based pseudo-observations $\hat\bU_1, \dots, \hat\bU_n$) evaluated at the set $A_u^{\bs}$ defined at the beginning of the proof of \cref{prop:signature}. Its asymptotic distribution can thus be evaluated if that of the set-valued empirical copula process indexed by $\{A_u^{\bs}: u \in [0,1]\}$ is known. Based on preliminary results obtained by one of the authors (ML) in joint work with Axel Bücher, Johan Segers and Stanislav Volgushev, the process $B_n$ can be shown to converge to $W \circ F - \xi f$, where $\xi$ is a centered normal random variable whose variance depends on the copula and the dimension $d$ but not on $x$; the rate of convergence of the set-valued empirical copula process can also be assessed by carefully consulting the proofs in this joint work and using the aforementioned rate of convergence of the usual empirical process. The term $\xi f$ is obviously locally Lipschitz continuous at $x$ as long as $f$ itself is, which is assumed in \cref{prop:kde} below.
        
    \end{enumerate}
\end{rem}

Provided that \cref{conj:ecdf} is true, we can characterize the limiting distribution of $\hat f_n(x)$ for any $x \in (0,1)$ under standard smoothness assumptions on $f$ and a suitable choice of kernel:

\begin{prop} \label{prop:kde}
    Suppose \cref{conj:ecdf} holds and assume the following regularity conditions:
    \begin{enumerate}
        \item\label{ass:regularity-generator} The generator $f$ is twice differentiable at $x \in (0,1)$ and satisfies $f(x) > 0$.
        \item\label{ass:regularity-kernel} The kernel $K$ is a symmetric, unimodal and twice-differentiable probability density on $\R$ with bounded first and second derivatives such that as $y \to \infty$, for some $\beta > 0$,
        \[
            K(y) = O(y^{-(3+\beta)}), \quad K'(y) = o(y^{-3/2}), \quad K''(y) = O(y^{-\beta}).
        \]
        \item\label{ass:regularity-bandwidths} The sequence of bandwidths $\{h_n\}$ satisfies $n h_n^5 \to 0$ and $n h_n^\gamma \to \infty$ as $n \to \infty$, where $\gamma := \max\{5/(1+\alpha), 5 - 2\beta/(1+\beta)\} < 5$, for $\alpha$ as in \cref{conj:ecdf} and $\beta$ as in Assumption~\ref{ass:regularity-kernel}.
    \end{enumerate}
    Then
    \[
        \sqrt{n h_n} (\hat f_n(x) - f(x)) \wc \cN(0, \sigma^2)
    \]
    as $n \to \infty$, where $\sigma^2 := f(x) \int_{-\infty}^\infty K(z)^2 \dif z$.
\end{prop}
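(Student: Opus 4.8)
The plan is to write $\hat f_n(x)$ as a smooth functional of the pseudo-observation empirical distribution $\hat F_n$, substitute the weak approximation $B_n := \sqrt{n}(\hat F_n - F) \approx B = W \circ F + V$ furnished by \cref{conj:ecdf}, and extract a Gaussian term by stochastic integration by parts. Set $K_{h_n}(u) := h_n^{-1} K(u/h_n)$. Up to the generalized-inverse convention in \cref{eq:kde} --- whose effect is an $\Op((nh_n)^{-1})$ term, negligible after rescaling since $nh_n \to \infty$ --- one has $\hat f_n(x) = \int_0^1 K_{h_n}(x-y)\dif\hat F_n(y)$, and hence
\[
    \hat f_n(x) - f(x) = \left( \int_0^1 K_{h_n}(x-y) f(y) \dif y - f(x) \right) + \int_0^1 K_{h_n}(x-y) \dif(\hat F_n - F)(y).
\]
For the first (bias) term, the substitution $z = (x-y)/h_n$, a second-order Taylor expansion of $f$ at $x$ (Assumption~\ref{ass:regularity-generator}), symmetry of $K$, and finiteness of $\int z^2 K(z)\dif z$ (guaranteed by the tail bound in Assumption~\ref{ass:regularity-kernel}) give a bias of order $h_n^2$; times $\sqrt{nh_n}$ this is $O(\sqrt{nh_n^5}) = o(1)$ by Assumption~\ref{ass:regularity-bandwidths}.

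For the second (stochastic) term I would integrate by parts. Since $K_{h_n}(x-\cdot)$ is $C^1$ and $\hat F_n - F$ is of bounded variation with $(\hat F_n - F)(1) = 0$, that term equals $\int_0^1 K_{h_n}'(x-y)(\hat F_n(y) - F(y))\dif y$ up to negligible boundary terms (of order $h_n^{2+\beta}$, controlled by the tail decay of $K$). Writing $\hat F_n - F = n^{-1/2}B_n$ and working on the enriched space of \cref{conj:ecdf}, the bound
\[
    \left| \int_0^1 K_{h_n}'(x-y)(B_n - B)(y)\dif y \right| \le \|B_n - B\|_\infty\, h_n^{-1} \int_{\R} |K'(z)|\dif z,
\]
in which $\int_\R |K'| < \infty$ because $K'$ is bounded with $K'(y) = o(y^{-3/2})$, shows that after multiplying by $\sqrt{nh_n}$ the replacement of $B_n$ by $B$ costs $\Op(n^{-(1+\alpha)/10} h_n^{-1/2}) = \op(1)$, precisely because $nh_n^{5/(1+\alpha)} \to \infty$. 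It remains to handle $\sqrt{h_n}\int_0^1 K_{h_n}'(x-y)B(y)\dif y$ with $B = W\circ F + V$.

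The $V$-part is $\op(1)$: on the random interval $[x-\Delta, x+\Delta]$ on which $V$ is Lipschitz, symmetry of $K$ gives $\int_{x-\Delta}^{x+\Delta} K_{h_n}'(x-y)\dif y = 0$, so $V(y)$ may be replaced by $V(y) - V(x)$ and the piece bounded by $\sqrt{h_n}\,\|V'\|_\infty \int_{|z| \le \Delta/h_n} |z|\,|K'(z)|\dif z$, while off this interval $V$ is merely bounded and the piece is at most $h_n^{-1/2}\|V\|_\infty \int_{|z| > \Delta/h_n} |K'(z)|\dif z$ --- both vanish using $K'$ bounded with $K'(y) = o(y^{-3/2})$. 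For the $W\circ F$-part, $y \mapsto W(F(y))$ is a continuous, deterministically time-changed martingale with vanishing cross-variation against the bounded-variation map $K_{h_n}(x-\cdot)$, so integrating by parts back yields
\[
    \int_0^1 K_{h_n}'(x-y) W(F(y))\dif y = -K_{h_n}(x-1)W(1) + \int_0^1 K_{h_n}(x-y)\dif W(F(y)),
\]
the first term negligible by the tail of $K$. For each fixed $n$ the stochastic integral is exactly centered Gaussian, and the It\^o isometry for $W\circ F$ gives its variance as $\int_0^1 K_{h_n}(x-y)^2 f(y)\dif y$; hence $\sqrt{h_n}\int_0^1 K_{h_n}(x-y)\dif W(F(y)) \sim \cN(0, \sigma_n^2)$ with $\sigma_n^2 = \int K(z)^2 f(x - h_n z)\dif z \to f(x)\int K(z)^2\dif z = \sigma^2$, by continuity of $f$ at $x$ and dominated convergence (using $K^2 \in L^1(\R)$). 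Slutsky's theorem then assembles the pieces into $\sqrt{nh_n}(\hat f_n(x) - f(x)) \wc \cN(0, \sigma^2)$.

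The main obstacle I anticipate lies in the two ``non-classical'' constituents of the limit $B$. The $V$-term must be driven to zero using only a \emph{local} Lipschitz property at $x$ --- not assuming $V$ adapted, a semimartingale, or globally smooth --- together with the prescribed polynomial decay of $K'$; the exact calibration of the decay rates for $K$, $K'$, $K''$ in Assumption~\ref{ass:regularity-kernel} and of the exponent $\gamma$ in Assumption~\ref{ass:regularity-bandwidths} is what makes every boundary and tail estimate close. The $W\circ F$-term requires careful stochastic calculus --- the integration by parts and It\^o isometry for the time-changed Brownian motion $W\circ F$ when $F$ may have flat stretches --- so that the classical limiting variance $f(x)\int K^2$ materializes. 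The remaining steps (bias order, boundary terms, the $B_n \to B$ substitution, the generalized-inverse discrepancy) are routine bookkeeping once the exponent arithmetic is verified.
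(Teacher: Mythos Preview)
Your proposal is correct and takes a genuinely more direct route than the paper. The paper expresses $\hat f_n(x)$ through the quantile function $\hat F_n^{-1}$, passes from the Riemann sum to an integral via the Koksma--Hlawka inequality, Taylor-expands $K((x-\hat F_n^{-1}(y))/h_n)$ to second order around $F^{-1}(y)$, and then must (i) linearize $\hat F_n^{-1}-F^{-1}$ as $-n^{-1/2}B_n\circ F^{-1}/f\circ F^{-1}$ plus a remainder $R_n$ that is controlled via Dudley's $p$-variation bound for inverse operators, and (ii) dispose of the second-order term $I_n^{(2)}$ using the assumed decay of $K''$ together with the second component $5-2\beta/(1+\beta)$ of $\gamma$. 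By contrast, you start from the exact identity $\hat f_n(x)=\int_0^1 K_{h_n}(x-y)\,\dif\hat F_n(y)$ (no approximation is needed here, so your remark about a generalized-inverse discrepancy is superfluous), and a single deterministic integration by parts---whose boundary terms in fact vanish exactly since $(\hat F_n-F)(0)=(\hat F_n-F)(1)=0$---lands you immediately at $n^{-1/2}\int_0^1 K_{h_n}'(x-y)\,B_n(y)\,\dif y$, which is precisely the expression the paper reaches only after all of the above machinery. From that point on the two arguments coincide: replace $B_n$ by $B$ using the rate in \cref{conj:ecdf}, split $B=W\circ F+V$, handle $V$ by local Lipschitz continuity plus tail decay of $K'$, and extract the Gaussian limit from the $W\circ F$ piece via stochastic integration by parts and the It\^o isometry. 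The upshot is that your argument bypasses the quantile-function detour entirely, never invokes Dudley's result, and in fact does not use the $K''$ hypothesis or the $5-2\beta/(1+\beta)$ part of Assumption~\ref{ass:regularity-bandwidths} at all; the paper's route, while longer, makes the connection to classical KDE theory via the quantile transform more explicit.
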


\begin{proof}
    Note that Assumption~\ref{ass:regularity-kernel} implies that $K$ and $K'$ are both bounded and absolutely integrable, and also that $K$ has a finite second moment $\int_{-\infty}^\infty y^2 K(y) \dif y < \infty$. Let $L := \max\{|f''(x)|, ||K'||_\infty, ||K''||_\infty\}$, which is finite by Assumptions~\ref{ass:regularity-generator} and~\ref{ass:regularity-kernel}. As a first step, let
    \[
        \psi_n(y) := \frac{1}{h_n} K\Big( \frac{x - \hat F_n^{-1}(y)}{h_n} \Big),
    \]
    so that $\hat f_n(x) = (1/n) \sum_{i=1}^n \psi_n(i/n)$. This is a step function with jumps at $1/n, \dots, (n-1)/n$, and therefore its total variation in the sense of Hardy and Krause is
    \[
        V_0^1(\psi_n) = \sum_{i=1}^{n-1} |\psi_n(i/n^+) - \psi_n(i/n^-)| \leq \frac{L}{h_n} \sum_{i=1}^{n-1} \Big( \frac{\hat F_n^{-1}(i/n)}{h_n} - \frac{\hat F_n^{-1}((i-1)/n)}{h_n} \Big) \leq \frac{L}{h_n^2},
    \]
    since $L$ is an upper bound for $K'$. Note also that the uniform grid $\{1/n, 2/n, \dots, (n-1)/n, 1\}$ has star discrepancy $1/n$, so
    \begin{equation} \label{eq:KHineq}
        \hat f_n(x) = \int_0^1 \psi_n(y) \dif y + O\Big( \frac{1}{n h_n^2} \Big)
    \end{equation}
    by the Koksma--Hlawka inequality \citep[see, e.g.,][Chapter 5]{lemieuxMonteCarloQuasiMonte2009}.

    By a second-order Taylor expansion of the function $t \mapsto K((x-t)/{h_n})$ around $F^{-1}(y)$, the leading term in \cref{eq:KHineq} can be expressed as
    \begin{multline}\label{eq:KDEthreeterms}
        \frac{1}{h_n} \int_0^1 K\Big( \frac{x - F^{-1}(y)}{h_n} \Big) \dif y - \frac{1}{h_n^2} \int_0^1 K'\Big( \frac{x - F^{-1}(y)}{h_n} \Big) \big( \hat F_n^{-1}(y) - F^{-1}(y) \big) \dif y
        \\
        + \frac{1}{2h_n^3} \int_0^1 K''\Big( \frac{x - F^{-1}(y) + u_n(y)}{h_n} \Big) \big( \hat F_n^{-1}(y) - F^{-1}(y) \big)^2 \dif y,
    \end{multline}
    where $u_n(y)$ lies between 0 and $F^{-1}(y) - \hat F_n^{-1}(y)$. Denote the three terms in \cref{eq:KDEthreeterms} as $I_n^{(0)}$, $I_n^{(1)}$ and $I_n^{(2)}$, respectively. We will prove that
    \begin{equation} \label{eq:toprove0}
        I_n^{(0)} = f(x) + O(h_n^2),
    \end{equation}
    \begin{equation} \label{eq:toprove1}
        \sqrt{n h_n} I_n^{(1)} \wc \cN(0, \sigma^2),
    \end{equation}
    and
    \begin{equation} \label{eq:toprove2}
        I_n^{(2)} = \Op\big( n^{-1} h_n^{-3 + \frac{\beta}{1+\beta}} \big).
    \end{equation}
    By Assumption~\ref{ass:regularity-bandwidths}, both $\{h_n^2\}$ and $\{n^{-1} h_n^{-3 + \frac{\beta}{1+\beta}}\}$ are $o(1/\sqrt{nh_n})$, since
    \[
        h_n^2 \sqrt{n h_n} = \sqrt{n h_n^5} \too 0
    \]
    and
    \[
        n^{-1} h_n^{-3 + \frac{\beta}{1+\beta}} \sqrt{n h_n} = \big( n h_n^{5 - \frac{2\beta}{1+\beta}} \big)^{-1/2} \too 0.
    \]
    Similarly, the error term in \cref{eq:KHineq} is also $o(1/\sqrt{nh_n})$. Therefore, \cref{eq:KHineq,eq:KDEthreeterms,eq:toprove0,eq:toprove1,eq:toprove2} directly imply the desired result.

    \noindent\textbf{Proof of \cref{eq:toprove0}.}
    First, by a change of variable and the fact that $f$ is supported on a subset of $[0, 1]$,
    \[
        I_n^{(0)} = \frac{1}{h_n} \int_0^1 K\Big( \frac{x - y}{h_n} \Big) f(y) \dif y = \frac{1}{h_n} \int_{-\infty}^\infty K\Big( \frac{x - y}{h_n} \Big) f(y) \dif y.
    \]
    Next, since $y \mapsto h_n^{-1} K((x - y)/h_n)$ is a probability density and hence integrates to $1$ over $\R$, we may write
    \[
        f(x) = \frac{1}{h_n} \int_{-\infty}^\infty K\Big( \frac{x - y}{h_n} \Big) f(x) \dif y,
    \]
    Let $\delta > 0$ be sufficiently small so that $|f(x) - f(x+y) + f'(x) y| \leq 2L y^2$ for $y \in [-\delta, \delta]$, which is possible since $|f''(x)| \leq L$. Then,
    \begin{align}
        |I_n^{(0)} - f(x)| &= \bigg| \frac{1}{h_n} \int_{-\infty}^\infty K\Big( \frac{x - y}{h_n} \Big) (f(y) - f(x)) \dif y \bigg|
        \notag
        \\
        &= \bigg| \frac{1}{h_n} \int_{-\infty}^\infty K\Big( \frac{y}{h_n} \Big) (f(x) - f(x+y)) \dif y \bigg|
        \notag
        \\
        &\leq \bigg| \frac{1}{h_n} \int_{-\delta}^\delta K\Big( \frac{y}{h_n} \Big) (f(x) - f(x+y)) \dif y \bigg| + \frac{1}{h_n} \int_{\R \setminus [-\delta, \delta]} K\Big( \frac{y}{h_n} \Big) f(x) \dif y
        \notag
        \\
        &\qquad + \frac{1}{h_n} \int_{\R \setminus [-\delta, \delta]} K\Big( \frac{y}{h_n} \Big) f(x+y) \dif y.
        \label{eq:boundI0}
    \end{align}
    By the choice of $\delta$, the first term in~\cref{eq:boundI0} is upper bounded by
    \begin{align*}
        f'(x) \bigg| \frac{1}{h_n} \int_{-\delta}^\delta K\Big( \frac{y}{h_n} \Big) y \dif y \bigg| + 2L \int_{-\delta}^\delta \frac{1}{h_n} K\Big(\frac{y}{h_n}\Big) y^2 \dif y &= 2L \int_{-\delta}^\delta \frac{1}{h_n} K\Big(\frac{y}{h_n}\Big) y^2 \dif y
        \\
        &= 2L h_n^2 \int_{-\delta/h_n}^{\delta/h_n} K(y) y^2 \dif y
        \\
        &= O(h_n^2),
    \end{align*}
    where the first equality is due to the symmetry of $K$ (Assumption~\ref{ass:regularity-kernel}), the second follows from a change of variable, and the third follows because $K$ has a finite second moment.
    
    By the same change of variable and the assumed bound on the tails of $K$ (Assumption~\ref{ass:regularity-kernel}), the second term in~\cref{eq:boundI0} is equal to
    \[
        f(x) \int_{\R \setminus [-\delta/h_n, \delta/h_n]} K(y) \dif y = O\bigg( \int_{\R \setminus [-\delta/h_n, \delta/h_n]} |y|^{-(3 + \beta)} \dif y \bigg) = O(h_n^{2+\beta}).
    \]
    Again by Assumption~\ref{ass:regularity-kernel}, the third term in~\cref{eq:boundI0} is upper bounded by
    \[
        \frac{K(\delta/h_n)}{h_n} \int_{-\infty}^\infty f(x+y) \dif y = O(h_n^2).
    \]
    Thus $|I_n^{(0)} - f(x)| = O(h_n^2)$, which proves \cref{eq:toprove0}.

    \noindent\textbf{Proof of \cref{eq:toprove1}.}
    Let
    \[
        R_n(y) := \big( \hat F_n^{-1}(y) - F^{-1}(y) \big) + \frac{n^{-1/2} B_n(F^{-1}(y))}{f(F^{-1}(y))}, \quad 0 \leq y \leq 1,
    \]
    so that
    \begin{align}
        \sqrt{n h_n} I_n^{(1)} &= -\frac{1}{h_n^{3/2}} \int_0^1 K'\Big( \frac{x - F^{-1}(y)}{h_n} \Big) \sqrt{n} \big( \hat F_n^{-1}(y) - F^{-1}(y) \big) \dif y
        \notag
        \\
        &= \frac{1}{h_n^{3/2}} \int_0^1 K'\Big( \frac{x - F^{-1}(y)}{h_n} \Big) \Big( \frac{B_n(F^{-1}(y))}{f(F^{-1}(y))} + \sqrt{n} R_n(y) \Big) \dif y
        \notag
        \\
        &= \frac{1}{h_n^{3/2}} \int_0^1 K'\Big( \frac{x - y}{h_n} \Big) B_n(y) \dif y + \frac{\sqrt{n}}{h_n^{3/2}} \int_0^1 K'\Big( \frac{x - F^{-1}(y)}{h_n} \Big) R_n(y) \dif y,
        \label{eq:In1dev}
    \end{align}
    where the first term in \cref{eq:In1dev} follows from the change of variable from $y$ to $F(y)$. Furthermore, in this term we may replace $B_n$ by $B$ up to negligible error, since
    \[
        \frac{1}{h_n^{3/2}} \int_0^1 \Big| K'\Big( \frac{x - y}{h_n} \Big) (B_n(y) - B(y)) \Big| \dif y 
        \leq
        \frac{1}{\sqrt{h_n}} \int_{-\infty}^\infty |K'(z)| \dif z \cdot \|B_n - B\|_\infty = \Op(n^{-\frac{1+\alpha}{10}} h_n^{-1/2}) = \op(1)
    \]
    by the absolute integrability of $K'$ together with the assumed rate of convergence of $B_n$ to $B$ and the choice of the sequence $\{h_n$\}; indeed,
    \[
        n^{-\frac{1+\alpha}{10}} h_n^{-\frac{1}{2}} = \big( n h_n^{\frac{5}{1+\alpha}} \big)^{-\frac{1+\alpha}{10}} \too 0
    \]
    by Assumption~\ref{ass:regularity-bandwidths}.

    We now turn to the second term in~\cref{eq:In1dev}. Let $\delta > 0$ be sufficiently small so that on $[x-\delta, x+\delta]$, $f$ is bounded away from $0$ and $\infty$ and is $(1/2)$-H\"older continuous, which is possible by Assumption~\ref{ass:regularity-generator}. The second term can now be decomposed into
    \begin{equation} \label{eq:In1term2}
        \frac{\sqrt{n}}{h_n^{3/2}} \int_{[0,1] \setminus [F(x-\delta), F(x+\delta)]} K'\Big( \frac{x - F^{-1}(y)}{h_n} \Big) R_n(y) \dif y + \frac{\sqrt{n}}{h_n^{3/2}} \int_{F(x-\delta)}^{F(x+\delta)} K'\Big( \frac{x - F^{-1}(y)}{h_n} \Big) R_n(y) \dif y.
    \end{equation}
    Recall that $B_n$ is assumed to be uniformly tight in the topology of uniform convergence on $[0,1]$. By Hadamard differentiability of the inverse mapping and the functional delta method (see, e.g., \citet{vandervaartWeakConvergenceEmpirical2023}, Section~3.10.5.2, or \citet{vervaatFunctionalCentralLimit1972}), $\sqrt{n} \big( \hat F_n^{-1} - F^{-1} \big)$ shares this property, and hence so does $\sqrt{n} R_n$. That is, $\|R_n\|_\infty = \Op(1/\sqrt{n})$ which suffices for the first term in \cref{eq:In1term2}:
    \begin{align*}
        \frac{\sqrt{n}}{h_n^{3/2}} \bigg| \int_{[0,1] \setminus [F(x-\delta), F(x+\delta)]} K'\Big( \frac{x - F^{-1}(y)}{h_n} \Big) R_n(y) \dif y \bigg| &\leq \frac{\Op(1)}{h_n^{3/2}} \int_{[0,1] \setminus [F(x-\delta), F(x+\delta)]} \Big| K'\Big( \frac{x - F^{-1}(y)}{h_n} \Big) \Big| \dif y
        \\
        &\leq \Op(1) \frac{|K'(\delta/h_n)|}{h_n^{3/2}}
        \\
        &= \op(1)
    \end{align*}
    by Assumption~\ref{ass:regularity-kernel}.
    
    However, such a crude bound on $R_n$ does not suffice to control the second term in~\cref{eq:In1term2}. Instead, we apply the Cauchy--Schwarz inequality to bound the integral therein by
    \begin{equation} \label{eq:HolderIneq}
        \bigg( \int_{F(x-\delta)}^{F(x+\delta)} K'\Big( \frac{x - F^{-1}(y)}{h_n} \Big)^2 \dif y \cdot \int_{F(x-\delta)}^{F(x+\delta)} R_n(y)^2 \dif y \bigg)^{1/2}
    \end{equation}
    By two changes of variable, the first integral in~\cref{eq:HolderIneq} is equal to
    \[
        \int_{x-\delta}^{x+\delta} K'\Big( \frac{x - y}{h_n} \Big)^2 f(y) \dif y = h_n \int_{-\delta/h_n}^{\delta/h_n} K'(z)^2 f(x - h_n z) \dif z = O(h_n)
    \]
    because $K'$ is square-integrable (since it is bounded and absolutely integrable) and $f$ is bounded on $[x-\delta, x+\delta]$. As for the second integral in~\cref{eq:HolderIneq}, it can be controlled by using Corollary~2.4 in \citet{dudleyOrderRemainderDerivatives1994}; in the notation of that paper,
    \[
        G(y) := \frac{F(x-\delta + 2\delta y) - F(x-\delta)}{F(x+\delta) - F(x-\delta)}
        \quad
        \text{and}
        \quad
        (G+g)(y) := \frac{\hat F_n(x-\delta + 2\delta y) - F(x-\delta)}{F(x+\delta) - F(x-\delta)}.
    \]
    Indeed, $G$ is then an increasing diffeomorphism of $[0,1]$ whose derivative
    \[
        G'(y) = \frac{2\delta}{F(x+\delta) - F(x-\delta)} f(x-\delta + 2\delta y)
    \]
    is uniformly bounded (above and below) and $(1/2)$-H\"older continuous. The inverses of $G$ and $G + g$ are given by
    \begin{align*}
        G^{-1}(z) &= \frac{F^{-1}(F(x-\delta) + (F(x+\delta) - F(x-\delta)) z) - (x-\delta)}{2\delta},
        \\
        (G+g)^{-1}(z) &= \frac{\hat F^{-1}(F(x-\delta) + (F(x+\delta) - F(x-\delta)) z) - (x-\delta)}{2\delta},
    \end{align*}
    respectively. We then easily obtain that
    \begin{equation} \label{eq:RgRn}
        R_g(z) := (G+g)^{-1}(z) - G^{-1}(z) + \frac{g(G^{-1}(z))}{G'(G^{-1}(z))} = \frac{1}{2\delta} R_n(F(x-\delta) + (F(x+\delta) - F(x-\delta)) z).
    \end{equation}
    The consequence of Corollary~2.4 in \citet{dudleyOrderRemainderDerivatives1994} is that
    \begin{equation}\label{eq:dudley}
        \int_0^1 R_g(z)^2 \dif z = O(\|g\|_{[2]}^3),
    \end{equation}
    where the quadratic variation norm $\|g\|_{[2]} := \|g\|_\infty + \|g\|_{(2)}$ of $g$ is defined as the sum of its supremum norm and the square root of its quadratic variation. By previous arguments on the uniform tightness of $B_n$, the supremum norm satisfies $\|g\|_\infty \leq \|\hat F_n - F\|_\infty / (F(x+\delta) - F(x-\delta)) = \Op(1/(\sqrt{n} (F(x+\delta) - F(x-\delta))))$. As for the quadratic variation of $g$, for any finite collection of points $0 \leq y_0 < y_1 < \dots < y_N \leq 1$, we have
    \begin{align*}
        \sum_{k=1}^N (g(y_k) - g(y_{k-1}))^2 &= \frac{1}{(F(x+\delta) - F(x-\delta))^2} \sum_{k=1}^N \big\{ \hat F_n(x-\delta + 2\delta y_k) - \hat F_n(x-\delta + 2\delta y_{k-1})
        \\
        &\qquad - F(x-\delta + 2\delta y_k) + F(x-\delta + 2\delta y_{k-1}) \big\}^2
        \\
        &\leq \frac{1}{(F(x+\delta) - F(x-\delta))^2} \sum_{k=1}^N \big(\hat F_n(x-\delta + 2\delta y_k) - \hat F_n(x-\delta + 2\delta y_{k-1})\big)^2
        \\
        &\qquad + \frac{1}{(F(x+\delta) - F(x-\delta))^2} \sum_{k=1}^N \big(F(x-\delta + 2\delta y_k) - F(x-\delta + 2\delta y_{k-1})\big)^2.
    \end{align*}
    The second term in the sum above vanishes as the mesh of the partition induced by $y_0, \dots, y_N$ tends to zero since $F$, being a continuous cdf, has zero quadratic variation. As for the first term, it is maximized precisely when $N = n\hat F_n(x+\delta) - n\hat F_n(x-\delta)$ and each plateau of $\hat F_n$ contains a single point, in which case it reduces to
    \[
        \frac{1}{(F(x+\delta) - F(x-\delta))^2} \sum_{k = n\hat F_n(x-\delta) + 1}^{n\hat F_n(x+\delta)} \frac{1}{n^2} = \frac{n\hat F_n(x+\delta) - n\hat F_n(x-\delta)}{n^2 (F(x+\delta) - F(x-\delta))^2} \leq \frac{1}{n (F(x+\delta) - F(x-\delta))^2}.
    \]
    Thus $\|g\|_{(2)} \leq 1/(\sqrt{n} (F(x+\delta) - F(x-\delta)))$, and \cref{eq:dudley} then states that
    \[
        \int_0^1 R_g(z)^2 \dif z = \Op(n^{-3/2}).
    \]
    This implies a bound of the same order for the second integral in \cref{eq:HolderIneq}, since by \cref{eq:RgRn} and a change of variable,
    \[
        \int_{F(x-\delta)}^{F(x+\delta)} R_n(y)^2 \dif y = (2\delta)^2 (F(x+\delta) - F(x-\delta)) \int_0^1 R_g(z)^2 \dif z = \Op(n^{-3/2}).
    \]
    From \cref{eq:HolderIneq}, we finally conclude that the second term in \cref{eq:In1term2} is at most of order
    \[
        \frac{\sqrt{n}}{h_n^{3/2}} (O(h_n) \Op(n^{-3/2}) )^{1/2} = \Op(n^{-1/4} h_n^{-1}) = \op(1)
    \]
    by Assumption~\ref{ass:regularity-bandwidths}. Piecing things together, we have rewritten the entirety of~\cref{eq:In1dev} as
    \begin{align}
        \sqrt{n h_n} I_n^{(1)} &= \frac{1}{h_n^{3/2}} \int_0^1 K'\Big( \frac{x - y}{h_n} \Big) B(y) \dif y + \op(1)
        \notag
        \\
        &= \frac{1}{h_n^{3/2}} \int_{x-\Delta}^{x+\Delta} K'\Big( \frac{x - y}{h_n} \Big) B(y) \dif y + \op(1)
        \notag
        \\
        &= \frac{1}{h_n^{3/2}} \int_{x-\Delta}^{x+\Delta} K'\Big( \frac{x - y}{h_n} \Big) W(F(y)) \dif y + \frac{1}{h_n^{3/2}} \int_{x-\Delta}^{x+\Delta} K'\Big( \frac{x - y}{h_n} \Big) V(y) \dif y + \op(1).
        \label{eq:In1dev2}
    \end{align}
    Here, the random variable $\Delta$ is positive and chosen so that $V$ has bounded weak derivative $V'$ on $[x-\Delta, x+\Delta]$ (see the remark after \cref{conj:ecdf}) and the second equality is due to Assumption~\ref{ass:regularity-kernel}, which ensures that
    \[
        \frac{1}{h_n^{3/2}} \int_{[0,1] \setminus [x-\Delta, x+\Delta]} K'\Big( \frac{x - y}{h_n} \Big) B(y) \dif y = \Op(h_n^{-3/2} K'(\Delta/h_n)) = \op(1),
    \]
    while the third equality is due to the assumed structure of the process $B$. We shall apply integration by parts to both integrals in~\cref{eq:In1dev2}, with $u(y) = W(F(y))$ in the first application and $u(y) = V(y)$ in the second; in both applications, the other integrand $v(y)$ is given by $-h_n K((x-y)/h_n)$. The first use of integration by parts is justified by the stochastic integration by parts formula~\citep[][Equation~(3.8) of Chapter 3]{karatzasBrownianMotionStochastic1998}, and the second is justified by the assumption that $V$ is Lipschitz continuous in a $\Delta$-neighborhood of $x$ (again, see the remark after \cref{conj:ecdf}). Upon these applications, the second term in \cref{eq:In1dev2} becomes
    \begin{align}
        &\phantom{{}={}}\frac{u(x+\Delta)v(x+\Delta) - u(x-\Delta)v(x-\Delta)}{h_n^{3/2}} - \frac{1}{h_n^{3/2}} \int_{x-\Delta}^{x+\Delta} v(y) \dif u(y) \nonumber
        \\
        &=\op(1) + \frac{1}{\sqrt{h_n}} \int_{x-\Delta}^{x+\Delta} K\Big( \frac{x-y}{h_n} \Big) V'(y) \dif y \nonumber
        \\
        &= \op(1) + \Op(h_n^{-1/2}), \label{eq:I1eq1}
    \end{align}
    where the first equality follows from our assumptions on the unimodality and the tails of $K$ (Assumption~\ref{ass:regularity-kernel}). Similarly, the first term in~\cref{eq:In1dev2} becomes
    \begin{align}
        &\frac{u(x+\Delta)v(x+\Delta) - u(x-\Delta)v(x-\Delta)}{h_n^{3/2}} - \frac{1}{h_n^{3/2}} \int_{x-\Delta}^{x+\Delta} v(y) \dif u(y)
        = \op(1) + \frac{1}{\sqrt{h_n}} \int_{x-\Delta}^{x+\Delta} K\Big( \frac{x-y}{h_n} \Big) \dif(W \circ F)(y). \label{eq:I1eq2}
    \end{align}
    By the assumed tails of $K$ (Assumption~\ref{ass:regularity-kernel}), we may now ``add back'' the set $[0,1] \setminus [x-\Delta, x+\Delta]$ to the domains of integration in~\cref{eq:In1dev2}, thereby obtaining from \cref{eq:I1eq1,eq:I1eq2}
    \[
        \sqrt{nh_n} I_n^{(1)}
        =
        \op(1) + \Op(h_n^{-1/2}) + \frac{1}{\sqrt{h_n}} \int_0^1 K\Big( \frac{x-y}{h_n} \Big) \dif(W \circ F)(y).
    \]
    The last term in the above display has a centered Gaussian distribution with variance
    \[
        \sigma_n^2 := \frac{1}{h_n} \int_0^1 K\Big( \frac{x - y}{h_n} \Big)^2 \dif [W \circ F](y),
    \]
    by the It\^{o} isometry \citep[][Proposition~2.10 of Chapter 3]{karatzasBrownianMotionStochastic1998}, where $[W \circ F]$ denotes the quadratic variation process of $W \circ F$. To prove \cref{eq:toprove1}, it therefore suffices to show that $\sigma^2_n \to \sigma^2$. To this end, note that since $W$ is a standard Wiener process, the process $[W \circ F]$ is deterministic and simply equal to $F$, so that
    \[
        \sigma_n^2 = \frac{1}{h_n} \int_0^1 K\Big( \frac{x - y}{h_n} \Big)^2 f(y) \dif y.
    \]
    Let $\delta > 0$ be sufficiently small so that $f$ is bounded on $[x-\delta, x+\delta]$, which is possible by the continuity of $f$ at $x$ implied by Assumption~\ref{ass:regularity-generator}. We then have the decomposition
    \begin{equation*}\label{eq:I1sigma_n}
        \sigma_n^2 = \frac{1}{h_n} \int_{[0,1] \setminus [x-\delta, x+\delta]} K\Big( \frac{x - y}{h_n} \Big)^2 f(y) \dif y + \frac{1}{h_n} \int_{x-\delta}^{x+\delta} K\Big( \frac{x - y}{h_n} \Big)^2 f(y) \dif y.
    \end{equation*}
    The first term in the above sum is bounded by
    \[
        \frac{K(\delta/h_n)^2}{h_n} \int_0^1 f(y) \dif y = O(h_n^{5+2\beta}) = o(1)
    \]
    by Assumption~\ref{ass:regularity-kernel}, while the second term is equal to
    \begin{equation}\label{eq:intK2f}
        \int_{-\delta/h_n}^{\delta/h_n} K(z)^2 f(x - h_n z) \dif z
    \end{equation}
    after a change of variable. By the continuity of $f$ at $x$, we have $K(z)^2 f(x - h_n z) \Ind{-\delta/h_n \leq z \leq \delta/h_n} \to K(z)^2 f(x)$ for every $z \in \R$, so by the boundedness of $f$ on $[x-\delta, x+\delta]$ and the dominated convergence theorem, the integral in \cref{eq:intK2f} converges to $f(x) \int_{-\infty}^{\infty} K(z)^2 \dif z = \sigma^2$. It follows that $\sigma_n^2 \to \sigma^2$.

    \noindent\textbf{Proof of \cref{eq:toprove2}.}
    As noted previously, \cref{conj:ecdf} ensures that $\sqrt{n} \big( \hat F_n^{-1} - F^{-1} \big)$ forms a uniformly tight sequence in the topology of uniform convergence on $[0,1]$. That is, $\sup_{0 \leq y \leq 1} \big( \hat F_n^{-1}(y) - F^{-1}(y) \big)^2 = \Op(1/n)$, and so
    \begin{align*}
        I_n^{(2)} &= \Op\Big(\frac{1}{n h_n^3}\Big) \int_0^1 K''\Big( \frac{x - F^{-1}(y) + u_n(y)}{h_n} \Big) \dif y.
    \end{align*}
    Let $\delta_n := h_n^{\beta/(1+\beta)}$. Since $h_n \to 0$, we may assume without loss of generality that $\delta_n$ is small enough that $f$ is bounded on $[x-\delta_n, x+\delta_n]$. The integral in the above display is bounded by
    \begin{equation}\label{eq:I3decomp}
        \int_{F(x-\delta_n)}^{F(x+\delta_n)} \Big| K''\Big( \frac{x - F^{-1}(y) + u_n(y)}{h_n} \Big) \Big| \dif y
        +
        \int_{[0,1] \setminus [F(x-\delta_n), F(x+\delta_n)]} \Big| K''\Big( \frac{x - F^{-1}(y) + u_n(y)}{h_n} \Big) \Big| \dif y.
    \end{equation}
    Because $|K''|$ is bounded by $L$, the first term in \cref{eq:I3decomp} is bounded by
    \[
        L (F(x+\delta_n) - F(x-\delta_n)) = O(\delta_n) = O\Big( h_n^{\frac{\beta}{1+\beta}} \Big).
    \]
    As for the second term, recall that $u_n(y)$ lies between $0$ and $F^{-1}(y) - \hat{F}_n^{-1}(y)$. Therefore, $u_n(y) = \Op(n^{-1/2}) = \op(\delta_n)$ uniformly on $[0,1]$. Therefore, for $y \in [0,1] \setminus [F(x-\delta_n), F(x+\delta_n)]$, we have
    \[
        \Big| \frac{x - F^{-1}(y) + u_n(y)}{h_n} \Big| \geq \frac{\delta_n}{h_n} (1 - \op(1)) = h_n^{-\frac{1}{1+\beta}} (1 - \op(1))
    \]
    so by Assumption~\ref{ass:regularity-kernel},
    \begin{equation*}
        \int_{[0,1] \setminus [F(x-\delta_n), F(x+\delta_n)]} \Big| K''\Big( \frac{x - F^{-1}(y) + u_n(y)}{h_n} \Big) \Big| \dif y
        \leq K''\Big( h_n^{-\frac{1}{1+\beta}} (1 - \op(1)) \Big) = \Op\Big( h_n^{\frac{\beta}{1+\beta}} \Big).
    \end{equation*}
    The sum in \cref{eq:I3decomp} is thus of order $\Op\Big( h_n^{\frac{\beta}{1+\beta}} \Big)$, from which we finally conclude that
    \[
        I_n^{(2)} 
        = 
        \Op\Big(\frac{1}{n h_n^3}\Big) \cdot \Op\Big( h_n^{\frac{\beta}{1+\beta}} \Big)
        =
        \Op\Big( n^{-1} h_n^{-3 + \frac{\beta}{1+\beta}} \Big),
    \]
    as desired.
\end{proof}

\section{Simulations and applications}\label{sec:simsandapps}

\subsection{Simulations}

To assess the practical performance of our inference methods, we conducted a series of simulations with three main objectives: to evaluate the accuracy of the data-driven signature selection procedure (\cref{sub:sigselection}), to study parameter recovery under maximum likelihood estimation (\cref{sub:parametric}), and to assess the performance of the generator KDE (\cref{sub:nonpara}). The simulations span the range of generator families introduced in \cref{sub:simpleexamples}, including the triangular distribution, several members of the beta family, and an asymmetric mixture of truncated normal distributions. For each objective, we systematically varied the sample size $n$ and the dimension $d$ to explore their effects on accuracy, robustness, and estimation quality.

For signature selection, we considered all combinations of $n \in \{50, 100, 200, 500, 1000\}$ and $d \in \{2,3,4,5\}$, and evaluated recovery using both the Kolmogorov--Smirnov and Cram\'er--von Mises criteria. The generators here were the $\mathrm{Triangular}(1,1)$,\footnote{The two-parameter triangular distribution on $[0,1]$ with mode $m$ and upper limit $b$ --- which we write as $\mathrm{Triangular}(b,m)$ --- is a special case of the three-parameter triangular distribution with its lower limit fixed to $0$, where $0 < m \leq b \leq 1$.} and $\mathrm{Beta}(1/2, 1)$ densities, as well as the $(1/4) \cTN_{[0,1]}(1/4, 1/100) + (3/4) \cTN_{[0,1]}(3/4, 1/100)$ mixture density used in \cref{sub:simpleexamples}. In each replicate, we simulated the data $\bU_1, \ldots, \bU_n$ directly from the copula using the true generator using \cref{algo:sampler}, with no marginal transformations applied; we then applied the signature selection criterion to the rank-based pseudo-observations. For each dimension $d$, we evaluated the performance across all $2^{d-1}$ possible signature configurations. As we show in \cref{fig:signaturesim}, across all settings, the recovery rate quickly reached 100\% even for modest sample sizes, with little sensitivity to dimension or the generator family.

\begin{figure}[ht]
    \centering
    \includegraphics[width=0.32\textwidth]{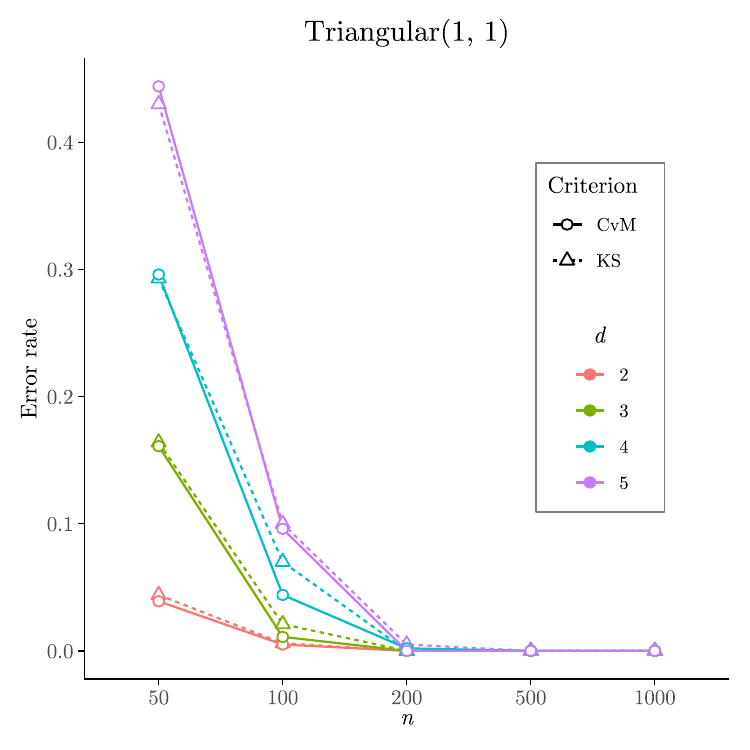}
    \includegraphics[width=0.32\textwidth]{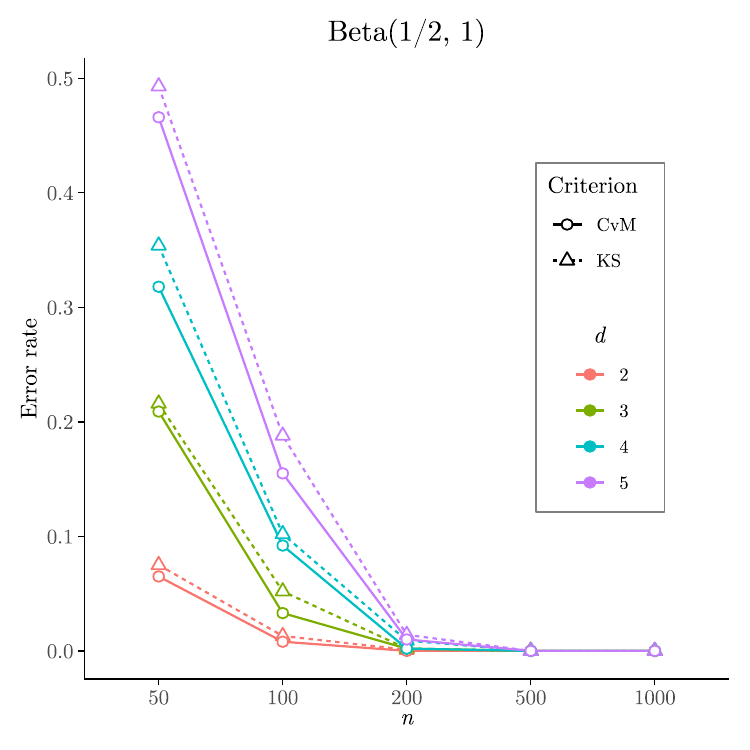}
    \includegraphics[width=0.32\textwidth]{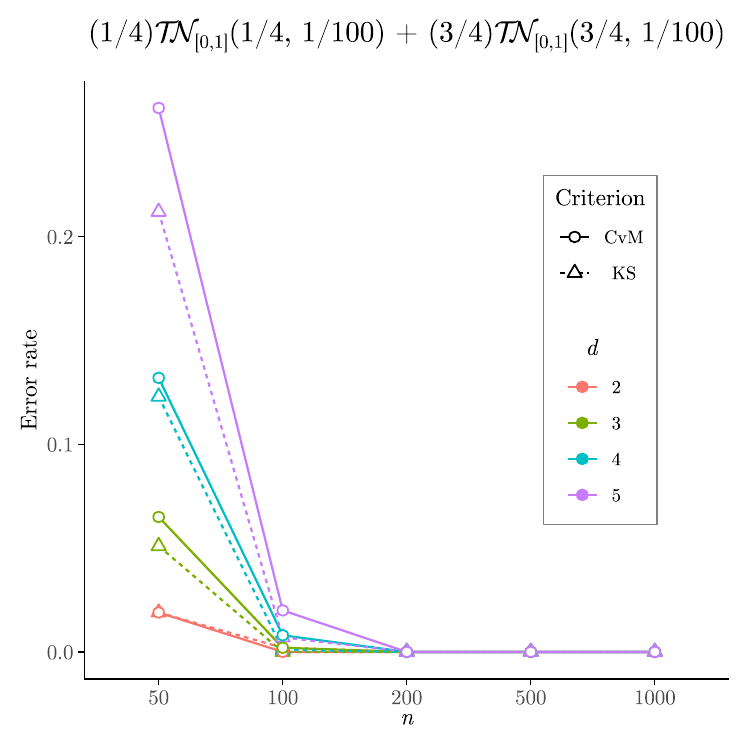}
    \caption{Proportion of replicates with incorrect signature recovery by method (Kolmogorov--Smirnov versus Cram\'er--von Mises), sample size $n \in \{50, 100, 200, 500, 1000\}$, and dimension $d \in \{2,3,4,5\}$, across several generator families used in \cref{sub:simpleexamples}. The recovery rate reaches $100\%$ quickly and is largely insensitive to $d$ and the generator family.}
    \label{fig:signaturesim}
\end{figure}

For parameter estimation, we used all combinations of $n \in \{100, 500, 1000, 5000, 10000\}$ and $d \in \{2,3,4,5\}$ and selected the same parametric families for the generator. We fixed the true signatures to be $(0,1)$ for $d=2$, $(0,1,0)$ for $d=3$, $(0,1,0,1)$ for $d=4$, and $(0,1,0,1,0)$ for $d=5$. For each $d$ and $n$, we simulated $d$-dimensional iid random vectors $\bX_1, \ldots, \bX_n$ with the given copula and $j$th marginal $X_{ij} \sim \cN(j, j)$, for $j \in \{1,\ldots,d\}$. We then estimated the generator parameters using both methods described in \cref{sub:parametric}: a two-stage estimator in which the marginals were estimated first (using the standard MLEs for the mean and variance under the assumption of normality), and the rank-based pseudo-observations without imposing any parametric assumptions on the marginals. In both cases, maximum likelihood was performed on the resulting wrapped sums using the \texttt{optim} and \texttt{nlm} functions in \textsf{R}. We independently replicated this process $100$ times in order to estimate the RMSE of each estimator. The results, displayed in \cref{fig:parametersim_triangular_1-1,fig:parametersim_beta_0.5-1,fig:parametersim_mixture_0.25-0.75}, show that RMSEs generally decrease with increasing $n$, and the shape and rate of convergence are comparable to those seen in classical univariate asymptotics. Interestingly, neither of the two methods for handling the marginals consistently outperformed the other across any factor in the simulation setup.

\captionsetup{format=plain, labelformat=simple, labelsep=colon}

\begin{figure}[ht]
    \ContinuedFloat*
    \centering
    \includegraphics[width=0.7\linewidth, keepaspectratio]{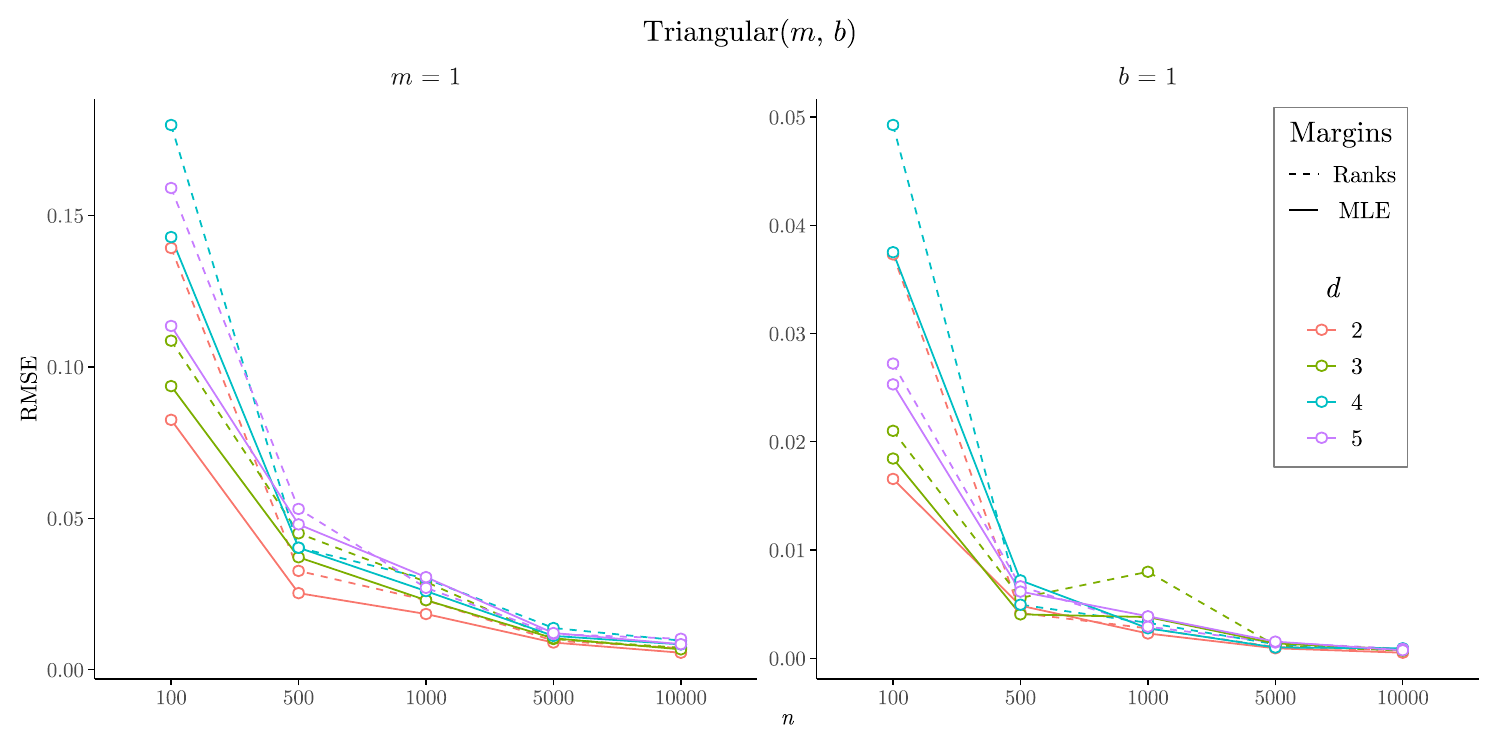}
    \caption{Parameter RMSEs for the triangular generator under maximum likelihood estimation, comparing two marginal estimation strategies: (i) two-stage parametric margins and (ii) rank-based pseudo-observations. RMSE decreases with $n$; neither marginal strategy uniformly dominates across settings.}
    \label{fig:parametersim_triangular_1-1}
\end{figure}

\begin{figure}[ht]
    \ContinuedFloat
    \centering
    \includegraphics[width=0.7\linewidth, keepaspectratio]{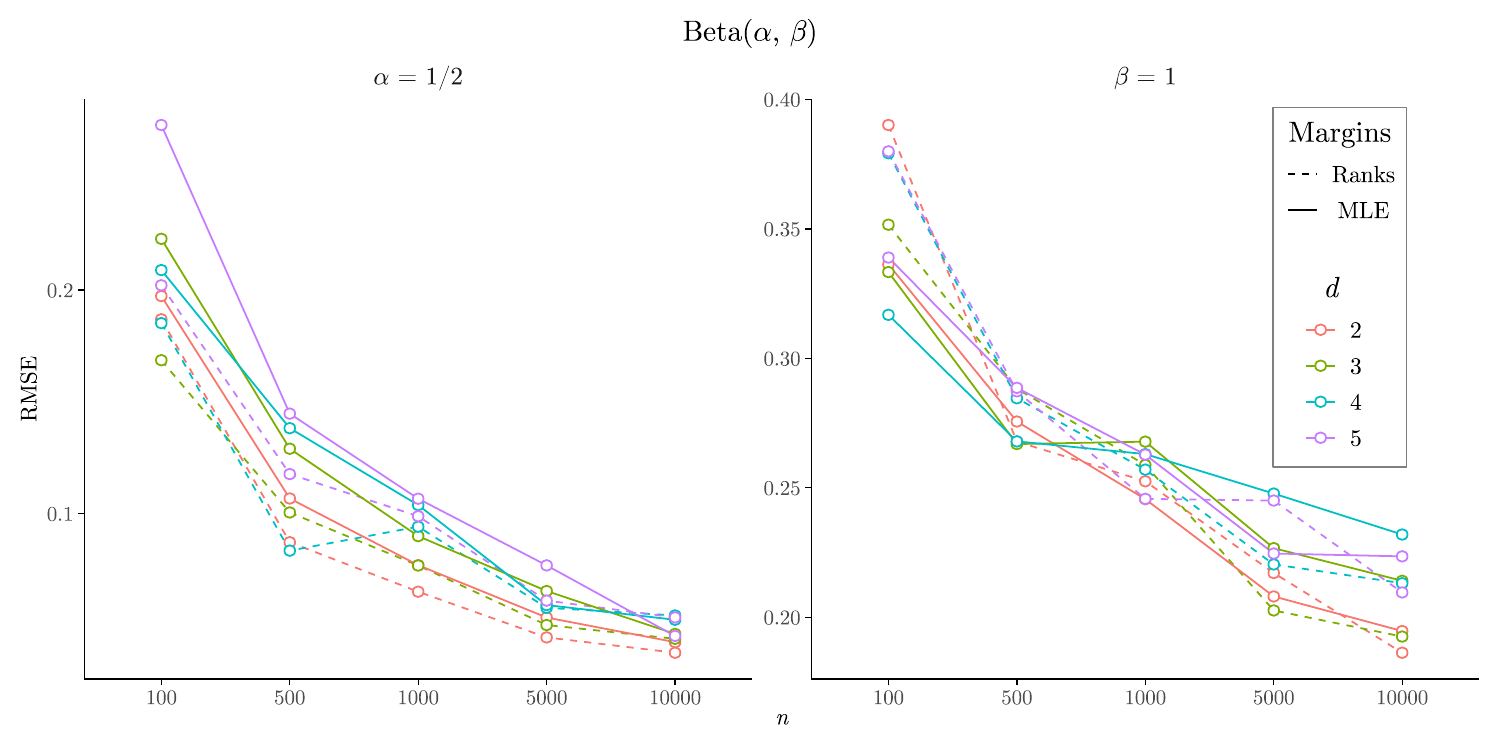}
    \caption{Parameter RMSEs for the $\mathrm{Beta}(1/2, 1)$ generator, with the same layout and comparison as \cref{fig:parametersim_triangular_1-1}. The RMSEs decline with $n$; both marginal estimation strategies perform similarly well.}
    \label{fig:parametersim_beta_0.5-1}
\end{figure}

\begin{figure}[ht]
    \ContinuedFloat
    \centering
    \includegraphics[width=0.7\linewidth, keepaspectratio]{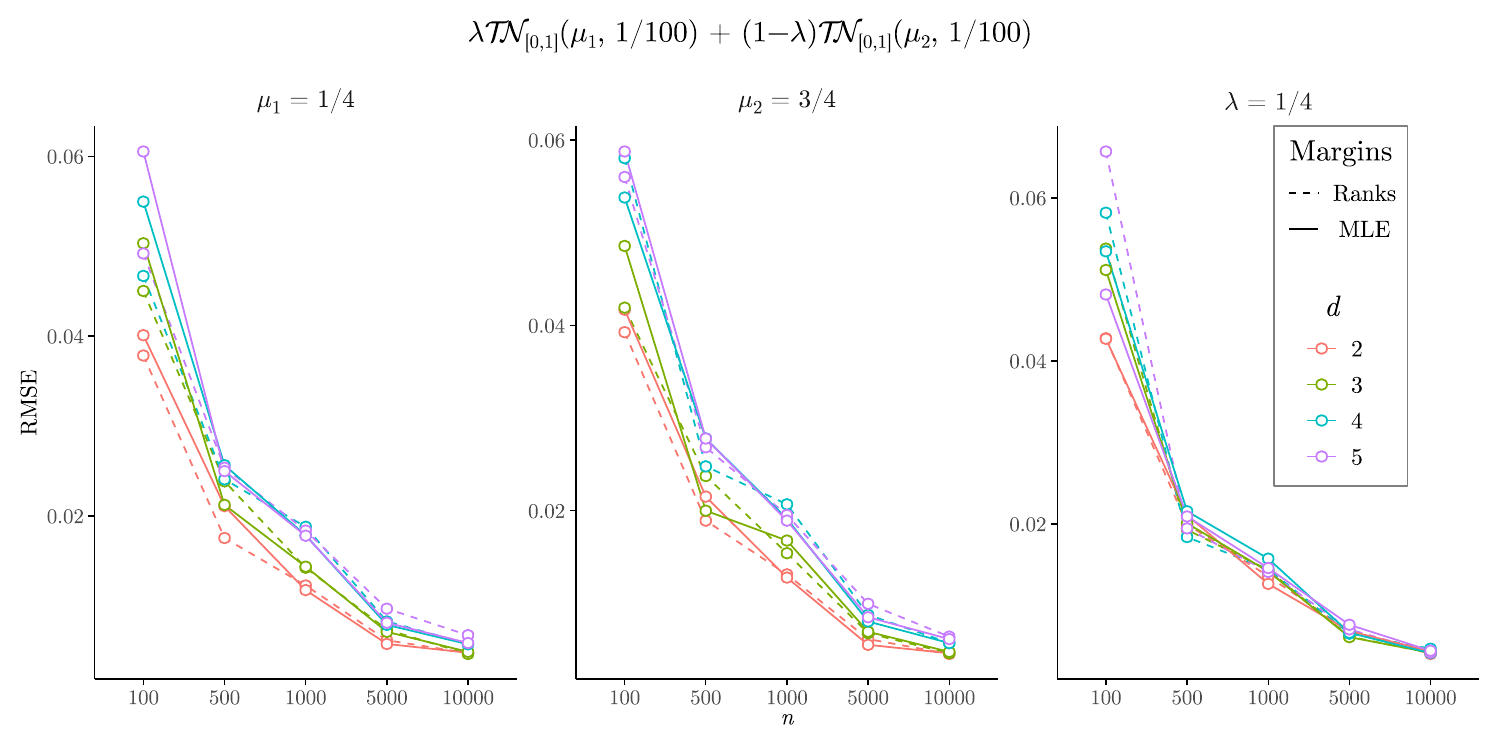}
    \caption{Parameter RMSEs for the $(1/4) \cTN_{[0,1]}(1/4, 1/100) + (3/4) \cTN_{[0,1]}(3/4, 1/100)$ generator; despite the more challenging target, RMSE improves steadily with $n$ and the two marginal estimation strategies remain competitive.}
    \label{fig:parametersim_mixture_0.25-0.75}
\end{figure}

For nonparametric estimation of the generator, we examined the ability of the rank-based kernel density estimator to recover the true generator density from pseudo-observations. We again simulated $d$-dimensional iid random vectors $\bX_1, \ldots, \bX_n$ using the same settings for $d \in \{2,3,4,5\}$ and $n \in \{100, 500, 1000, 5000, 10000\}$ as in the previous simulation study, with the same fixed signatures and normal margins. The underlying generator was again taken as the $(1/4) \cTN_{[0,1]}(1/4, 1/100) + (3/4) \cTN_{[0,1]}(3/4, 1/100)$ density --- an intentionally challenging choice, as KDEs often struggle with multimodal targets. Again, for each simulated dataset, we constructed pseudo-observations using both maximum likelihood-fitted margins and rank-based margins; we then applied univariate KDEs to the resulting wrapped sums to estimate the generator. Specifically, we used the \texttt{density} function in \textsf{R} with a Gaussian kernel and a plug-in bandwidth based on the Sheather--Jones method \citep{sheatherReliableDataBasedBandwidth1991}, evaluated over 200 evenly spaced points in $[0,1]$. The entire process was replicated 100 times to assess variability.

The results are summarized in \cref{fig:kde_6a,fig:kde_6b}, which plot the average estimated density across replicates with pointwise 95\% confidence bands and display the mean integrated squared error (MISE) for each setting. Even with as few as $n = 100$ observations, the KDE is able to capture the overall shape of the true generator quite well, with the primary discrepancy being an underestimation of the modal peaks. Performance improves substantially by $n = 500$, as mode recovery sharpens and confidence bands narrow visibly. This trend continues as $n$ increases, as the MISE values steadily decrease and the estimated densities approach the true generator density; performance degrades only slightly as $d$ increases. The parametric approach yields slightly lower MISE values overall, but the differences are modest, and rank-based estimation remains competitive even when the true margins are known only up to estimation of parameters.

\begin{figure}[ht]
    \ContinuedFloat*
    \centering
    \includegraphics[width=0.49\linewidth]{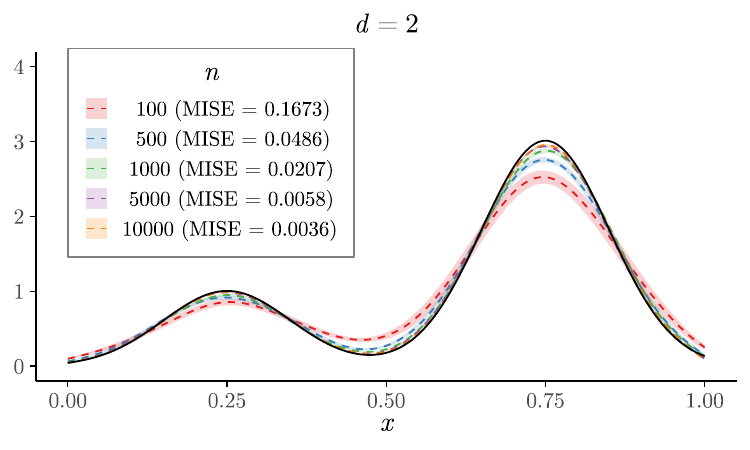}
    \includegraphics[width=0.49\linewidth]{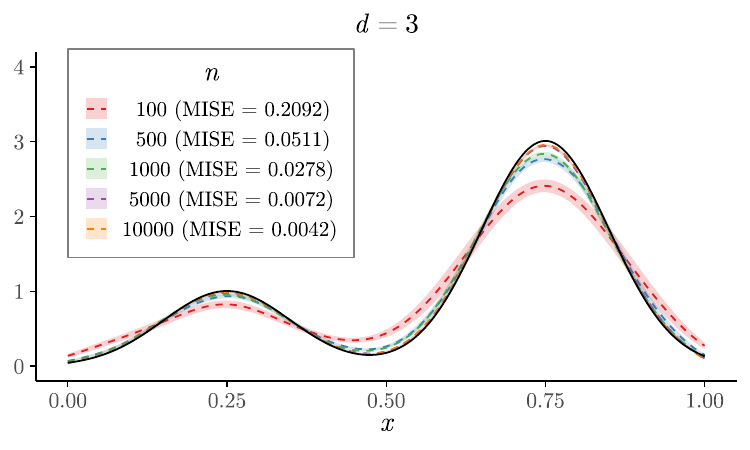} \\
    \includegraphics[width=0.49\linewidth]{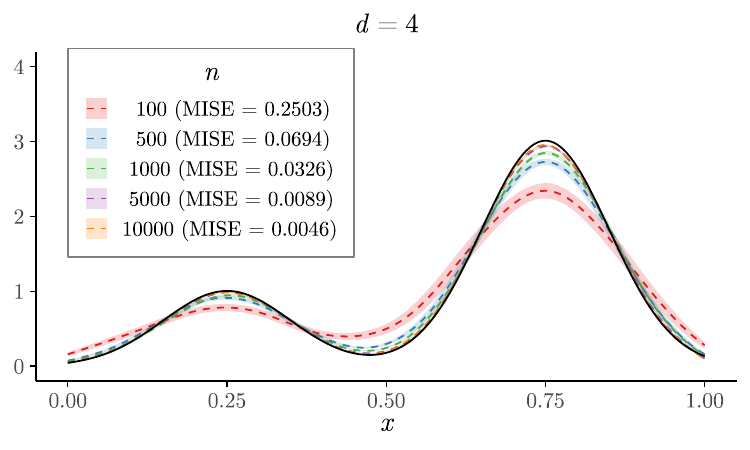}
    \includegraphics[width=0.49\linewidth]{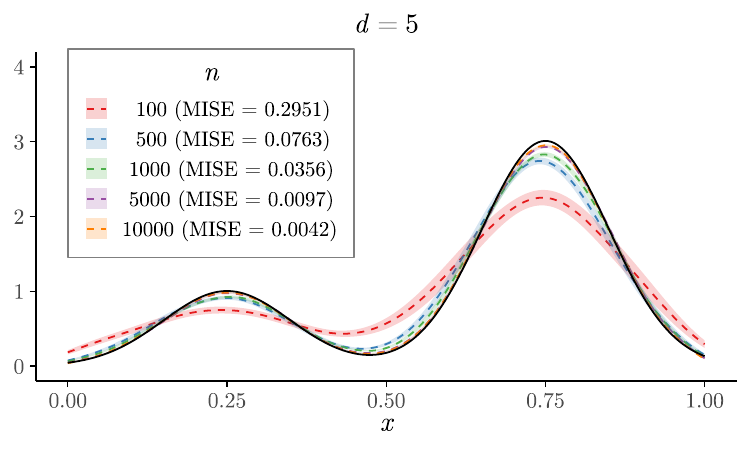}
    \caption{KDEs and corresponding MISEs of the generator for $d \in \{2,3,4,5\}$ based on parametric margins, for $n \in \{100, 500, 1000, 5000, 10000\}$. Dashed curves are means over $100$ replicates with pointwise $95\%$ confidence bands; the solid black curve is the true generator. Even at $n = 100$, the overall shape is captured, with rapid improvement by $n = 500$.}
    \label{fig:kde_6a}
\end{figure}

\begin{figure}[ht]
    \ContinuedFloat
    \centering
    \includegraphics[width=0.49\linewidth]{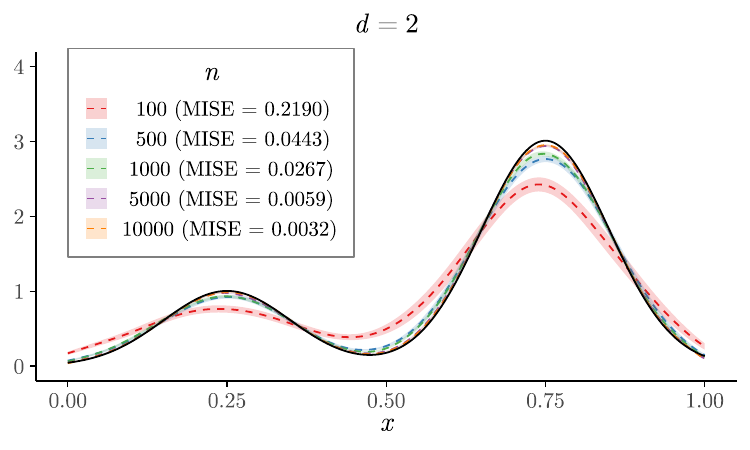}
    \includegraphics[width=0.49\linewidth]{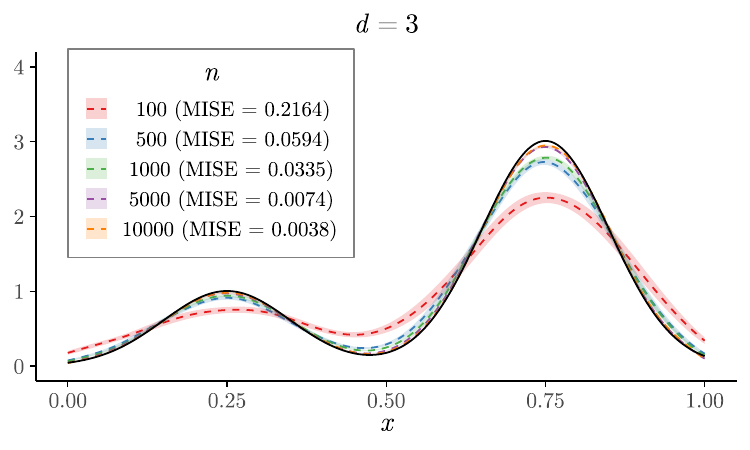} \\
    \includegraphics[width=0.49\linewidth]{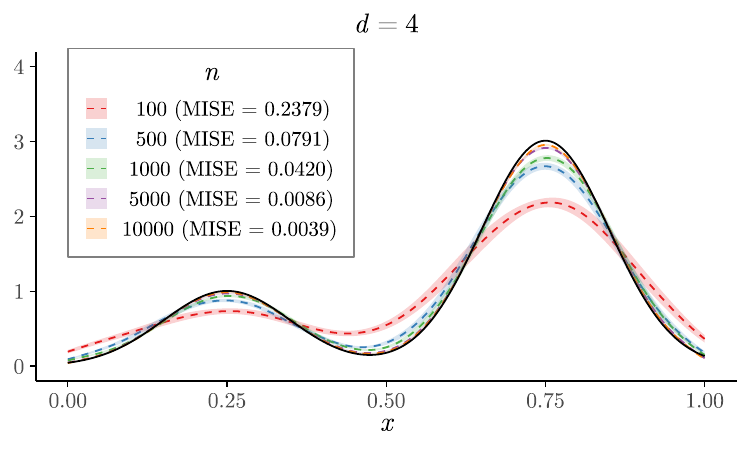}
    \includegraphics[width=0.49\linewidth]{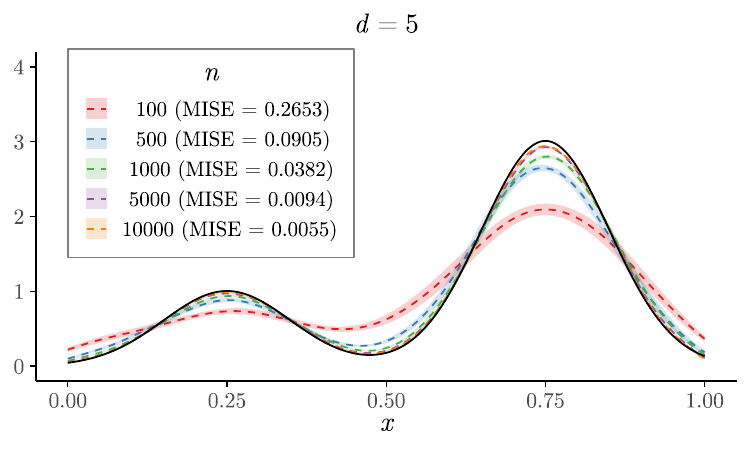}
    \caption{Generator KDEs and MISEs based on rank-based pseudo-observations under the same setup as \cref{fig:kde_6a}.}
    \label{fig:kde_6b}
\end{figure}

\subsection{Application: neural connectivity}\label{sub:neural}

The hippocampus, located within the medial temporal lobe of the brain, is intimately involved in many types of learning and memory, and makes extremely complex connections with other brain areas such as the prefrontal cortex. The hippocampus itself has been subdivided structurally and functionally into different subregions including the \emph{dentate gyrus}, the \emph{subiculum} and the \emph{CA3} area. The latter region is thought to operate as an auto-association network, meaning that it can integrate inputs from different areas of the cortex (for example, auditory and visual inputs), and then allow the recall of the entire set of associations --- the integrated memory --- when any one aspect of the memory is presented as a cue \citep{rollsDilutedConnectivityPattern2015, daltonSegmentingSubregionsHuman2017, przezdzikFunctionalOrganisationHippocampus2019}.

It is of great interest to understand the mechanisms and substructures involved in associative learning (for example, a language learner's associating the pronunciation of a new word with its written form). To study these mechanisms, \citet{brincatFrequencyspecificHippocampalprefrontalInteractions2015} analyze neurophysiological activity at various sites in the prefrontal cortex and hippocampus of rhesus monkeys during a paired associative memory task that requires the animal to remember which images from a given set are correctly paired. Analyzing electrophysiological spiking patterns in these sites, the authors observe that while the prefrontal cortex appears to be the principal site of rapid object associative learning, the hippocampus appears to be more involved in signalling feedback to the prefrontal cortex about the accuracy of answers. The data from their experiment consists of instantaneous phase angles, recorded from $24$ channels (electrodes) placed among the three aforementioned hippocampal regions and the prefrontal cortex across $840$ trials.

In a follow-up analysis of the same dataset, \citet{kleinTorusGraphsMultivariate2020} develop a new class of graphical models for data in the $d$-dimensional torus, which they call \emph{torus graphs}. Their framework is built to model the interactions between phase angles $X_1, \ldots, X_d$. A key feature of their model is the explicit distinction between two fundamental types of dependence between pairs of phase angles: rotational dependence, which is a function of the phase differences $X_j - X_k$, and reflectional dependence, which is a function of the phase sums $X_j + X_k$. This distinction allows them to define important subfamilies of torus graphs, including the \emph{phase difference submodel}, corresponding to random vectors $\bX \in [0, 2\pi)^d$ for which no $X_j$ is a rotation from $-X_k$ by some angle, and the \emph{uniform marginal submodel}, corresponding to the case where $\bX$ has uniform margins. Based on preliminary hypothesis tests \citep[Figure 8]{kleinTorusGraphsMultivariate2020}, the authors conclude that the data exhibit significant rotational dependence but lack evidence of reflectional dependence, leading them to adopt the more parsimonious model in the intersection of the two subfamilies (i.e., a phase difference model with uniform margins) for their analysis of neural connectivity. 

Using our framework to investigate the data, we first applied our data-driven signature selection procedure (see \cref{sub:sigselection}) to every possible pair of the $24$ neural phase channels. This procedure is model-agnostic, in the sense that it does not presume a specific type of dependence \emph{a priori}. The result was unequivocal: the signature $\bs = (0,1)$ was selected in every instance. Within our framework, a signature of $\bzero$ would correspond to dependence based on wrapped sums, while the $(0,1)$ signature corresponds to dependence based on wrapped differences. Our empirical finding therefore independently validates the conclusion reached by \citet{kleinTorusGraphsMultivariate2020}: the dependence structure in this dataset is overwhelmingly characterized by the consistency of phase differences.

The empirical agreement on the type of dependence present in the data motivates a more focused investigation into its structure. \citet{kleinTorusGraphsMultivariate2020} note that bivariate phase coupling measures generally fail to distinguish direct from indirect coupling between nodes of interest (in this case, the neural pathways operating within the hippocampus). However, under the phase difference model with uniform margins, the authors introduce a transformation of the pairwise parameters that yields a phase-locking-value-like conditional coupling measure bounded in $[0,1]$, enabling pairwise measures that condition on the remaining nodes. In this way, they can distinguish direct from indirect connections between nodes, both in simulated data and in experimental data gathered from the five linearly placed channels in the CA3 region. 

Consistent with our signature selection results, the analysis of \citet{kleinTorusGraphsMultivariate2020} isolates rotational (difference-based) coupling and identifies edges that are local along the CA3 probe. In particular, they show that there is no direct edge between the dentate gyrus and prefrontal cortex, but they each couple to the subiculum. Within the CA3 subregion, a reduced model fit to the five linearly arranged channels recovers a nearest-neighbor (chain-like) conditional independence pattern; in the full $24$-dimensional fit, the adjacency matrix (with channels ordered by physical position) again reveals a linear channel structure (see their Figure 8), even though the model does not include position as an input. These findings suggest that direct dependencies are primarily local, occurring between adjacent CA3 channels. We therefore focus on the five sequential CA3 pairs (i.e., channels $(j, j+1)$ for $j \in \{1,2,3,4\}$), which aligns our pairwise analysis with the graph-level edges and avoids confounding from indirect paths in non-adjacent pairs.

The data consists of four pairs of $840$ bivariate angle measurements $(x^{(j)}_{i1},x^{(j)}_{i2}) \in [-\pi, \pi]^2$, for $j \in \{1,\ldots,4\}$ and $i \in \{1, \ldots, 840\}$ (in this section, we present results for pairs $2$ and $3$, while the corresponding figures and tables for pairs $1$ and $4$ are provided in \cref{app:additionalplots}). To understand the dependence structure among the phase angles, we begin by examining the empirical beta copula density for each pair, as shown in \cref{fig:torus_empiricalbetas}. The densities are smooth and concentrate most of their mass around a single ridge along the main diagonal $u_1 = u_2$, indicating strong positive dependence between the two marginal distributions; peaks around the corners $(0,1)$ and $(1,0)$ likely explained by the rotational structure. This structure is consistent with the physical proximity of the pairs of recording sites, and suggests that the two corresponding neurons are driven by a common phase signal, possibly with negligible delay.

\begin{figure}[ht]
    \centering
    \includegraphics[width=0.32\textwidth]{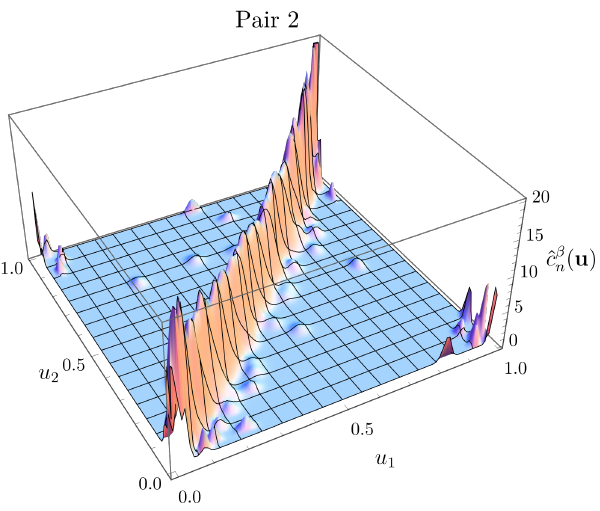}
    \includegraphics[width=0.32\textwidth]{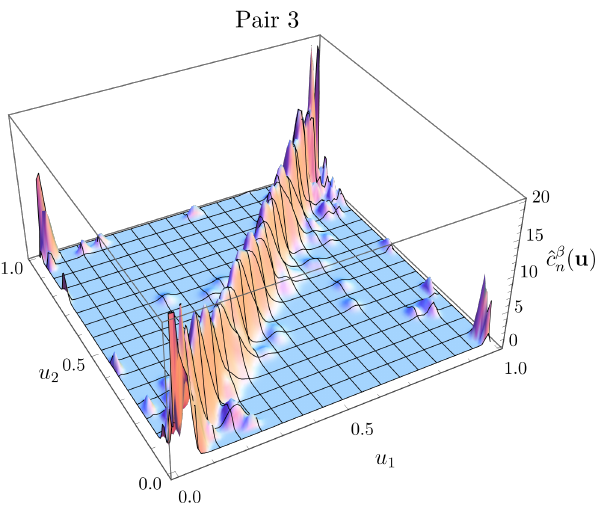}
    \caption{Empirical beta copula densities for pairs 2 and 3 of the CA3 channel data ($n = 840$ per pair). Each surface exhibits a pronounced ridge along the diagonal $u_1 = u_2$ (indicating strong positive dependence) with some additional mass around the corners $(0,1)$ and $(1,0)$; note that the view angle differs here from that in \cref{sec:examples}. Corresponding plots for pairs 1 and 4 are given in \cref{fig:torus_empiricalbetas14}.}
    \label{fig:torus_empiricalbetas}
\end{figure}

To gain further insight into the dependence between each pair, we examine the distributions of the wrapped differences $(\hat{u}_{i,1}^{(j)} - \hat{u}_{i,2}^{(j)}) \bmod{1}$, where $\hat{u}_{1,1}^{(j)}, \ldots, \hat{u}_{n,1}^{(j)}$ and $\hat{u}_{1,2}^{(j)}, \ldots, \hat{u}_{n,2}^{(j)}$ are the rank-based pseudo-observations of the marginal variables corresponding to pair $j$, for $j \in \{1, \ldots, 4\}$. Histograms of these data, shown in red in \cref{fig:torus_histograms}, exhibit prominent peaks appear near $0$ and $1$, and a relative trough in between. However, this apparent bimodality is purely an artifact of projecting a circular random variable onto the interval $[0,1)$; values near $0$ and $1$ are adjacent on the circle, and the apparent trough at $1/2$ arises from an arbitrary choice of origin. When we instead examine the shifted wrapped differences $(\hat{u}_{i,1}^{(j)} - \hat{u}_{i,2}^{(j)} + 1/2) \bmod{1}$, shown in blue in \cref{fig:torus_histograms}, the resulting histograms are cleanly unimodal across all pairs, with modes near $1/2$. In the language of our copulas, this reparameterization corresponds to shifting the argument of the generator $f$, and does not alter the copula model. However, it facilitates fitting by aligning the mode of $f$ with the center of the interval; in particular, under the signature $(0,1)$, the ridge along the diagonal $u_1 = u_2$ is well captured by a generator centered at $1/2$. We adopt the shifted representation throughout for clarity and computational convenience. 

\begin{figure}[ht]
    \centering
    \includegraphics[width=0.49\linewidth]{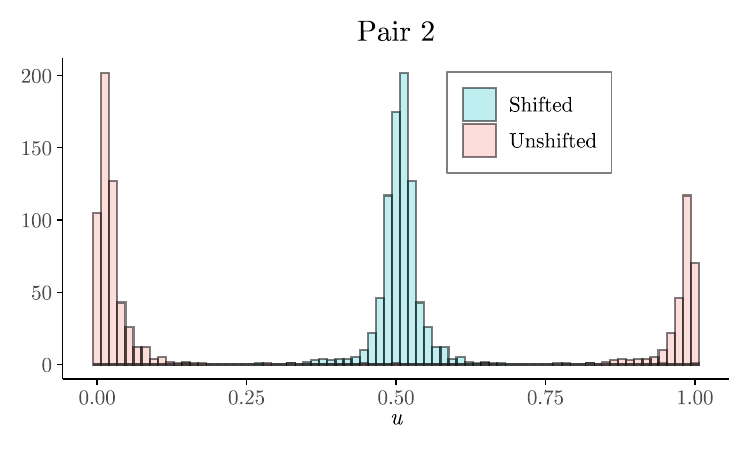}
    \includegraphics[width=0.49\linewidth]{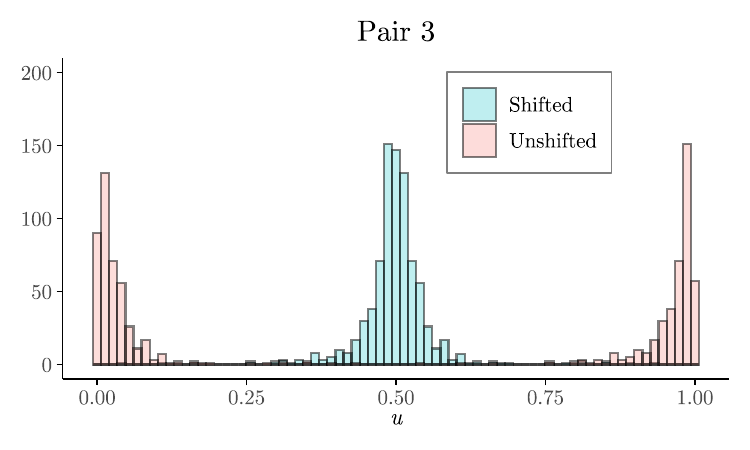}
    \caption{Histograms of wrapped differences for pairs 2 and 3 of the CA3 channel data  pair before (red) and after (blue) shifting by $+1/2 \bmod{1}$. The apparent pre-shift bimodality is a circular edge effect; the shift yields a clean unimodal distribution with mode near $1/2$, which aligns the generator with the $(0,1)$ signature. Corresponding histograms for pairs 1 and 4 are given in \cref{fig:torus_histograms14}.}
    \label{fig:torus_histograms}
\end{figure}

We now establish a formal connection between the \citet{kleinTorusGraphsMultivariate2020} submodel and our copula family. The density of the bivariate phase difference model with uniform margins takes the form \citep[][Equation 2.2]{kleinTorusGraphsMultivariate2020}
\[
    g(\bx) \propto \exp\left( \phi_1 \cos(x_1 - x_2) + \phi_2 \sin(x_1 - x_2)\right), \quad \bx \in [0,2\pi]^2.
\]
Applying the transformations $u_1 = x_1/2\pi$ and $u_2 = x_2/2\pi$ and using the fact that $2\pi x \bmod{2\pi} = 2\pi(x \bmod{1})$ for any $x \in \R$, we scale the support of $g$ to $[0,1]^2$ to obtain the scaled density
\begin{align*}
    \bar{g}(\bu) &\propto \exp\left(\phi_1 \cos(2\pi(u_1 - u_2 \bmod{1})) + \phi_2 \sin(2\pi(u_1 - u_2 \bmod{1}))\right), \quad \bu \in [0,1]^2
\end{align*}
which depends on $\bu$ through the wrapped sum $\tu_1 \oplus \tu_2$ under the signature $(0,1)$. 
It follows that $\bar{g} = c^{(0,1)}_{f_{\bphi}}$, where $f_{\bphi} \in \cF_{[0,1]}$ is the density of a von Mises distributed random variable scaled by a factor of $1/2\pi$:
\begin{equation}\label{eq:vonM}
    f_{\bphi}(x) := \frac{\exp(\phi_1 \cos(2\pi x) + \phi_2 \sin(2\pi x))}{I_0(\sqrt{\phi_1^2 + \phi_2^2})}
    = \frac{\exp(\kappa \cos(2\pi (x - \mu)))}{I_0(\kappa)}
    , \quad x \in [0,1]
\end{equation}
where $\kappa = \sqrt{\phi_1^2 + \phi_2^2}$, $\mu = \tan^{-1}(\phi_2/\phi_1)$, and $I_0$ is the modified Bessel function of the first kind of order $0$; we write $\mathrm{vonMises(\phi_1, \phi_2)}$ for this scaled von Mises distribution. Thus, up to rescaling, the distribution of the bivariate phase difference model with uniform margins is exactly $C_{f_{\bphi}}^{(0,1)} \in \cC$.

As a contrast to the $\mathrm{vonMises(\phi_1, \phi_2)}$ distribution, we first considered four other unimodal parametric families in $\cF_{[0,1]}$ that could plausibly serve as generators: the $\text{Beta}(\alpha, \beta)$ distribution, the $\cTN_{[0,1]}(\mu, \sigma^2)$ distribution, the $\mathrm{Kumaraswamy}(a,b)$ distribution, and the $\mathrm{Logit}\cN(\mu, \sigma^2)$ distribution.\footnote{$X \sim \mathrm{Kumaraswamy}(a,b)$ if and only if $X^a \sim \mathrm{Beta}(a,b)$, and $X \sim \mathrm{Logit}\cN(\mu, \sigma^2)$ if and only if $\log(X/(1-X)) \sim \cN(\mu, \sigma^2)$.} For each pair $j$, we fit these distributions to the shifted wrapped differences $(\hat{u}_{i,1}^{(j)} - \hat{u}_{i,2}^{(j)} + 1/2) \bmod{1}$ using maximum likelihood as developed in \cref{sub:parametric}, via the \texttt{optim} function in \textsf{R}. We also obtain a fully nonparametric estimate of the generator using the estimator in \cref{sub:nonpara}. The resulting fitted generators are illustrated in \cref{fig:torus_gens_uni}. While the KDE fits the data reasonably well, it is clear that none of the five unimodal parametric families are able to adequately capture the mode of the distributions.

\begin{figure}[ht]
    \centering
    \includegraphics[width=0.49\linewidth]{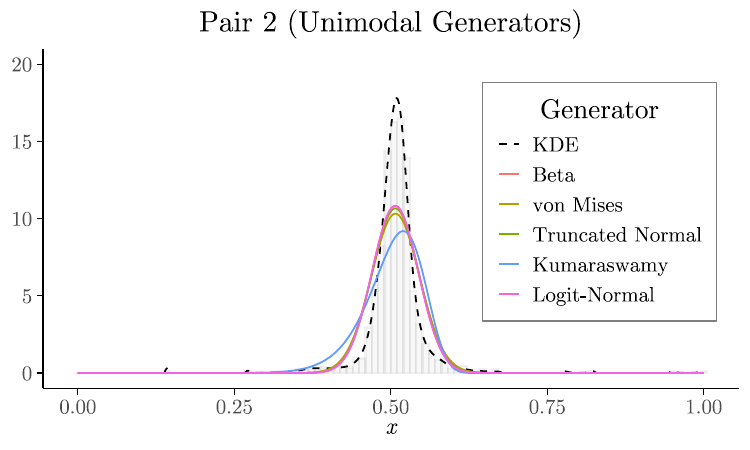}
    \includegraphics[width=0.49\linewidth]{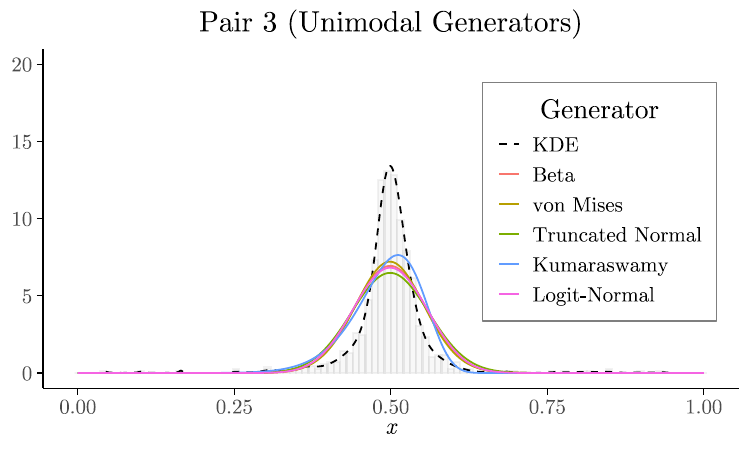}
    \caption{Unimodal parametric fits of the generator (beta, truncated normal, Kumaraswamy, logit-normal, and von Mises) overlaid on the shifted wrapped difference histograms for pairs 2 and 3. Corresponding fits for pairs 1 and 4 are given in \cref{fig:torus_gens_uni14}.}
    \label{fig:torus_gens_uni}
\end{figure}

Suspecting that this deficiency is due to local modes in the data away from $1/2$, we repeat the experiment with $2$-component mixtures of each of the five parametric distributions ($15$ in total). These fitted distributions, shown in \cref{fig:torus_gens_mix}, fare substantially better. \cref{tab:fit_v1,tab:fit_v2,tab:fit_v3,tab:fit_v4} show the resulting parameter estimates for each pair for all $20$ generators considered, along with corresponding estimates of Spearman's rho, Kendall's tau, and the Dette--Siburg--Stoimenov coefficient, with each computed using the formulas given by \cref{prop:spearman,prop:kendall,prop:chatterjee}; in the $2$-component mixtures, $\pi \in [0,1]$ is the mixture weight on the first component and $1-\pi$ is that on the second. We compare the parametric models using the Akaike information criterion (AIC) \citep{akaikeNewLookStatistical1974}, also shown in \cref{tab:fit_v1,tab:fit_v2,tab:fit_v3,tab:fit_v4}. In all cases, the AIC values suggest that of the ten parametric generators, the $2$-component mixture of von Mises distributions best fits the data.

\begin{figure}[ht]
    \centering
    \includegraphics[width=0.49\linewidth]{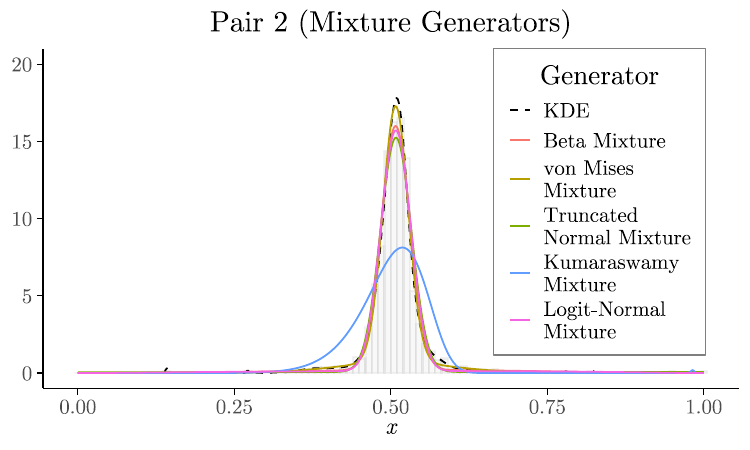}
    \includegraphics[width=0.49\linewidth]{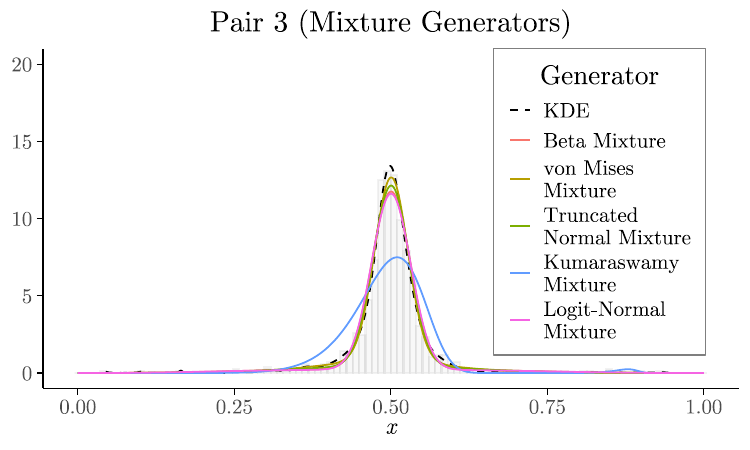}
    \caption{$2$-component mixture fits of the same parametric families overlaid on the histograms of the shifted wrapped differences for pairs 2 and 3.
    Corresponding plots for pairs 1 and 4 are given in \cref{fig:torus_gens_mix14}.
    }
    \label{fig:torus_gens_mix}
\end{figure}

\begin{table}[ht]
\centering
\resizebox{.94\linewidth}{!}{
\begin{tabular}{llcccc}
  \toprule
Generator & Estimated parameters & $\hat\rho$ & $\hat\tau$ & $\hat\xi$ & AIC \\ 
  \cmidrule(lr){1-1}\cmidrule(lr){2-2}\cmidrule(lr){3-5}\cmidrule(r){6-6}
\multicolumn{1}{@{}l}{\textit{Unimodal generators}} &  &  &  &  &  \\ 
  \quad $\mathrm{Beta}(\alpha,\beta)$ & $\hat\alpha = 93.69,\ \hat\beta = 91.13$ & 0.83 & 0.83 & 0.84 & -2874.99 \\ 
   \cmidrule(lr){1-6}
\quad $\mathrm{vonMises}(\phi_1,\phi_2)$ & $\hat\phi_1 = -17.19,\ \hat\phi_2 = -0.80$ & 0.82 & 0.83 & 0.84 & -2938.47 \\ 
   \cmidrule(lr){1-6}
\quad $\cTN_{[0,1]}(\mu,\sigma)$ & $\hat\mu = 0.51,\ \hat\sigma = 0.04$ & 0.83 & 0.83 & 0.84 & -2895.45 \\ 
   \cmidrule(lr){1-6}
\quad $\mathrm{Logit}\cN(\mu,\sigma)$ & $\hat\mu = 0.03,\ \hat\sigma = 0.15$ & 0.83 & 0.83 & 0.84 & -2869.76 \\ 
   \cmidrule(lr){1-6}
\quad $\mathrm{Kumaraswamy}(a,b)$ & $\hat a = 13.02,\ \hat b = 4641.32$ & 0.79 & 0.78 & 0.76 & -2768.56 \\ 
   \cmidrule(lr){1-6}
\addlinespace
\multicolumn{1}{@{}l}{\textit{Two-component mixtures}} &  &  &  &  &  \\ 
  \quad \makecell[l]{$\pi \cdot\mathrm{Beta}(\alpha_1,\beta_1) \,\, + $\\$(1-\pi)\cdot\,\mathrm{Beta}(\alpha_2,\beta_2)$} & \makecell[l]{$\hat\pi = 0.91\;\; \hat\alpha_1 = 241.51,\ \hat\beta_1 = 234.22$\\$\hat\alpha_2 = 4.09,\ \hat\beta_2 = 3.62$} & 0.84 & 0.83 & 0.80 & -3339.65 \\ 
   \cmidrule(lr){1-6}
\quad \makecell[l]{$\pi \cdot\mathrm{vonMises}(\phi_{11},\phi_{21}) \,\, + $\\$(1-\pi)\cdot\,\mathrm{vonMises}(\phi_{12},\phi_{22})$} & \makecell[l]{$\hat\pi = 0.16\;\; \hat\phi_{11} = -3.29,\ \hat\phi_{21} = -0.13$\\$\hat\phi_{12} = -62.60,\ \hat\phi_{22} = -3.00$} & 0.85 & 0.84 & 0.81 & \textbf{-3400.65} \\ 
   \cmidrule(lr){1-6}
\quad \makecell[l]{$\pi \cdot\cTN_{[0,1]}(\mu_1,\sigma_1) \,\, + $\\$(1-\pi)\cdot\,\cTN_{[0,1]}(\mu_2,\sigma_2)$} & \makecell[l]{$\hat\pi = 0.94\;\; \hat\mu_1 = 0.51,\ \hat\sigma_1 = 0.02$\\$\hat\mu_2 = 0.78,\ \hat\sigma_2 = 0.83$} & 0.83 & 0.81 & 0.79 & -3309.97 \\ 
   \cmidrule(lr){1-6}
\quad \makecell[l]{$\pi \cdot\mathrm{Logit}\cN(\mu_1,\sigma_1) \,\, + $\\$(1-\pi)\cdot\,\mathrm{Logit}\cN(\mu_2,\sigma_2)$} & \makecell[l]{$\hat\pi = 0.92\;\; \hat\mu_1 = 0.03,\ \hat\sigma_1 = 0.09$\\$\hat\mu_2 = 0.13,\ \hat\sigma_2 = 1.01$} & 0.83 & 0.82 & 0.80 & -3314.59 \\ 
   \cmidrule(lr){1-6}
\quad \makecell[l]{$\pi \cdot\mathrm{Kumaraswamy}(a_1,b_1) \,\, + $\\$(1-\pi)\cdot\,\mathrm{Kumaraswamy}(a_2,b_2)$} & \makecell[l]{$\hat\pi = 0.00\;\; \hat a_1 = 406.18,\ \hat b_1 = 1325.77$\\$\hat a_2 = 11.51,\ \hat b_2 = 1753.85$} & 0.77 & 0.76 & 0.74 & -2757.12 \\ 
   \cmidrule(lr){1-6}
\quad \makecell[l]{$\pi \cdot\mathrm{Beta}(\alpha_1,\beta_1) \,\, + $\\$(1-\pi)\cdot\,\mathrm{vonMises}(\phi_{12},\phi_{22})$} & \makecell[l]{$\hat\pi = 0.84\;\; \hat\alpha_1 = 313.25,\ \hat\beta_1 = 303.87$\\$\hat\phi_{12} = -3.31,\ \hat\phi_{22} = -0.14$} & 0.85 & 0.84 & 0.81 & -3400.25 \\ 
   \cmidrule(lr){1-6}
\quad \makecell[l]{$\pi \cdot\mathrm{Beta}(\alpha_1,\beta_1) \,\, + $\\$(1-\pi)\cdot\,\cTN_{[0,1]}(\mu_2,\sigma_2)$} & \makecell[l]{$\hat\pi = 0.94\;\; \hat\alpha_1 = 207.40,\ \hat\beta_1 = 200.72$\\$\hat\mu_2 = 18.77,\ \hat\sigma_2 = 46.86$} & 0.83 & 0.81 & 0.79 & -3304.86 \\ 
   \cmidrule(lr){1-6}
\quad \makecell[l]{$\pi \cdot\mathrm{Beta}(\alpha_1,\beta_1) \,\, + $\\$(1-\pi)\cdot\,\mathrm{Logit}\cN(\mu_2,\sigma_2)$} & \makecell[l]{$\hat\pi = 0.09\;\; \hat\alpha_1 = 4.04,\ \hat\beta_1 = 3.63$\\$\hat\mu_2 = 0.03,\ \hat\sigma_2 = 0.09$} & 0.84 & 0.82 & 0.80 & -3339.60 \\ 
   \cmidrule(lr){1-6}
\quad \makecell[l]{$\pi \cdot\mathrm{Beta}(\alpha_1,\beta_1) \,\, + $\\$(1-\pi)\cdot\,\mathrm{Kumaraswamy}(a_2,b_2)$} & \makecell[l]{$\hat\pi = 0.87\;\; \hat\alpha_1 = 157.56,\ \hat\beta_1 = 152.88$\\$\hat a_2 = 9.38,\ \hat b_2 = 288.42$} & 0.84 & 0.85 & 0.85 & -3138.14 \\ 
   \cmidrule(lr){1-6}
\quad \makecell[l]{$\pi \cdot\mathrm{vonMises}(\phi_{11},\phi_{21}) \,\, + $\\$(1-\pi)\cdot\,\cTN_{[0,1]}(\mu_2,\sigma_2)$} & \makecell[l]{$\hat\pi = 0.16\;\; \hat\phi_{11} = -2.51,\ \hat\phi_{21} = 0.06$\\$\hat\mu_2 = 0.51,\ \hat\sigma_2 = 0.02$} & 0.84 & 0.83 & 0.80 & -3393.61 \\ 
   \cmidrule(lr){1-6}
\quad \makecell[l]{$\pi \cdot\mathrm{vonMises}(\phi_{11},\phi_{21}) \,\, + $\\$(1-\pi)\cdot\,\mathrm{Logit}\cN(\mu_2,\sigma_2)$} & \makecell[l]{$\hat\pi = 0.16\;\; \hat\phi_{11} = -3.31,\ \hat\phi_{21} = -0.14$\\$\hat\mu_2 = 0.03,\ \hat\sigma_2 = 0.08$} & 0.85 & 0.84 & 0.81 & -3400.20 \\ 
   \cmidrule(lr){1-6}
\quad \makecell[l]{$\pi \cdot\mathrm{vonMises}(\phi_{11},\phi_{21}) \,\, + $\\$(1-\pi)\cdot\,\mathrm{Kumaraswamy}(a_2,b_2)$} & \makecell[l]{$\hat\pi = 0.00\;\; \hat\phi_{11} = 24.86,\ \hat\phi_{21} = -19.56$\\$\hat a_2 = 12.21,\ \hat b_2 = 2775.85$} & 0.77 & 0.76 & 0.74 & -2906.81 \\ 
   \cmidrule(lr){1-6}
\quad \makecell[l]{$\pi \cdot\cTN_{[0,1]}(\mu_1,\sigma_1) \,\, + $\\$(1-\pi)\cdot\,\mathrm{Logit}\cN(\mu_2,\sigma_2)$} & \makecell[l]{$\hat\pi = 0.13\;\; \hat\mu_1 = 0.51,\ \hat\sigma_1 = 0.12$\\$\hat\mu_2 = 0.03,\ \hat\sigma_2 = 0.08$} & 0.85 & 0.83 & 0.81 & -3379.95 \\ 
   \cmidrule(lr){1-6}
\quad \makecell[l]{$\pi \cdot\cTN_{[0,1]}(\mu_1,\sigma_1) \,\, + $\\$(1-\pi)\cdot\,\mathrm{Kumaraswamy}(a_2,b_2)$} & \makecell[l]{$\hat\pi = 0.06\;\; \hat\mu_1 = 3523.62,\ \hat\sigma_1 = 22022.81$\\$\hat a_2 = 21.60,\ \hat b_2 = 1566268.31$} & 0.81 & 0.79 & 0.75 & -3234.74 \\ 
   \cmidrule(lr){1-6}
\quad \makecell[l]{$\pi \cdot\mathrm{Logit}\cN(\mu_1,\sigma_1) \,\, + $\\$(1-\pi)\cdot\,\mathrm{Kumaraswamy}(a_2,b_2)$} & \makecell[l]{$\hat\pi = 0.91\;\; \hat\mu_1 = 0.03,\ \hat\sigma_1 = 0.09$\\$\hat a_2 = 2.60,\ \hat b_2 = 3.21$} & 0.83 & 0.82 & 0.80 & -3333.90 \\ 
   \cmidrule(lr){1-6}
\textit{Kernel density estimate} & — & 0.85 & 0.84 & 0.81 & — \\ 
   \bottomrule
\end{tabular}
}
\caption{Maximum likelihood fits of generator models for pair 2. As in \cref{tab:fit_v1}, we report parameter estimates, plug-in estimates for $\rho, \tau$, and $\xi$ (sample estimates: $\hat{\rho}_n = 0.86$, $\hat{\tau}_n = 0.83$, $\hat{\xi}_n = 0.79$), and AIC for the parametric generators. The two-component von Mises mixture attains the lowest AIC.} 
\label{tab:fit_v2}
\end{table}

\begin{table}[ht]
\centering
\resizebox{.94\linewidth}{!}{
\begin{tabular}{llcccc}
  \toprule
Generator & Estimated parameters & $\hat\rho$ & $\hat\tau$ & $\hat\xi$ & AIC \\ 
  \cmidrule(lr){1-1}\cmidrule(lr){2-2}\cmidrule(lr){3-5}\cmidrule(r){6-6}
\multicolumn{1}{@{}l}{\textit{Unimodal generators}} &  &  &  &  &  \\ 
  \quad $\mathrm{Beta}(\alpha,\beta)$ & $\hat\alpha = 37.98,\ \hat\beta = 38.13$ & 0.75 & 0.71 & 0.65 & -2193.89 \\ 
   \cmidrule(lr){1-6}
\quad $\mathrm{vonMises}(\phi_1,\phi_2)$ & $\hat\phi_1 = -8.54,\ \hat\phi_2 = 0.08$ & 0.75 & 0.72 & 0.66 & -2446.82 \\ 
   \cmidrule(lr){1-6}
\quad $\cTN_{[0,1]}(\mu,\sigma)$ & $\hat\mu = 0.50,\ \hat\sigma = 0.06$ & 0.73 & 0.70 & 0.63 & -2289.10 \\ 
   \cmidrule(lr){1-6}
\quad $\mathrm{Logit}\cN(\mu,\sigma)$ & $\hat\mu = -0.00,\ \hat\sigma = 0.23$ & 0.74 & 0.71 & 0.65 & -2167.77 \\ 
   \cmidrule(lr){1-6}
\quad $\mathrm{Kumaraswamy}(a,b)$ & $\hat a = 10.67,\ \hat b = 1157.06$ & 0.76 & 0.74 & 0.71 & -2144.80 \\ 
   \cmidrule(lr){1-6}
\addlinespace
\multicolumn{1}{@{}l}{\textit{Two-component mixtures}} &  &  &  &  &  \\ 
  \quad \makecell[l]{$\pi \cdot\mathrm{Beta}(\alpha_1,\beta_1) \,\, + $\\$(1-\pi)\cdot\,\mathrm{Beta}(\alpha_2,\beta_2)$} & \makecell[l]{$\hat\pi = 0.88\;\; \hat\alpha_1 = 135.02,\ \hat\beta_1 = 134.80$\\$\hat\alpha_2 = 3.98,\ \hat\beta_2 = 4.15$} & 0.80 & 0.78 & 0.74 & -2803.02 \\ 
   \cmidrule(lr){1-6}
\quad \makecell[l]{$\pi \cdot\mathrm{vonMises}(\phi_{11},\phi_{21}) \,\, + $\\$(1-\pi)\cdot\,\mathrm{vonMises}(\phi_{12},\phi_{22})$} & \makecell[l]{$\hat\pi = 0.21\;\; \hat\phi_{11} = -2.48,\ \hat\phi_{21} = 0.20$\\$\hat\phi_{12} = -36.25,\ \hat\phi_{22} = -0.17$} & 0.81 & 0.78 & 0.74 & \textbf{-2832.42} \\ 
   \cmidrule(lr){1-6}
\quad \makecell[l]{$\pi \cdot\cTN_{[0,1]}(\mu_1,\sigma_1) \,\, + $\\$(1-\pi)\cdot\,\cTN_{[0,1]}(\mu_2,\sigma_2)$} & \makecell[l]{$\hat\pi = 0.16\;\; \hat\mu_1 = 0.49,\ \hat\sigma_1 = 0.14$\\$\hat\mu_2 = 0.50,\ \hat\sigma_2 = 0.03$} & 0.80 & 0.78 & 0.74 & -2816.48 \\ 
   \cmidrule(lr){1-6}
\quad \makecell[l]{$\pi \cdot\mathrm{Logit}\cN(\mu_1,\sigma_1) \,\, + $\\$(1-\pi)\cdot\,\mathrm{Logit}\cN(\mu_2,\sigma_2)$} & \makecell[l]{$\hat\pi = 0.89\;\; \hat\mu_1 = 0.00,\ \hat\sigma_1 = 0.12$\\$\hat\mu_2 = -0.05,\ \hat\sigma_2 = 0.80$} & 0.80 & 0.78 & 0.74 & -2797.15 \\ 
   \cmidrule(lr){1-6}
\quad \makecell[l]{$\pi \cdot\mathrm{Kumaraswamy}(a_1,b_1) \,\, + $\\$(1-\pi)\cdot\,\mathrm{Kumaraswamy}(a_2,b_2)$} & \makecell[l]{$\hat\pi = 0.99\;\; \hat a_1 = 10.56,\ \hat b_1 = 1105.19$\\$\hat a_2 = 50.98,\ \hat b_2 = 712.67$} & 0.74 & 0.72 & 0.68 & -2472.90 \\ 
   \cmidrule(lr){1-6}
\quad \makecell[l]{$\pi \cdot\mathrm{Beta}(\alpha_1,\beta_1) \,\, + $\\$(1-\pi)\cdot\,\mathrm{vonMises}(\phi_{12},\phi_{22})$} & \makecell[l]{$\hat\pi = 0.79\;\; \hat\alpha_1 = 179.76,\ \hat\beta_1 = 179.22$\\$\hat\phi_{12} = -2.52,\ \hat\phi_{22} = 0.20$} & 0.81 & 0.78 & 0.74 & -2831.54 \\ 
   \cmidrule(lr){1-6}
\quad \makecell[l]{$\pi \cdot\mathrm{Beta}(\alpha_1,\beta_1) \,\, + $\\$(1-\pi)\cdot\,\cTN_{[0,1]}(\mu_2,\sigma_2)$} & \makecell[l]{$\hat\pi = 0.93\;\; \hat\alpha_1 = 105.58,\ \hat\beta_1 = 105.53$\\$\hat\mu_2 = -6.65,\ \hat\sigma_2 = 5.21$} & 0.78 & 0.77 & 0.74 & -2762.63 \\ 
   \cmidrule(lr){1-6}
\quad \makecell[l]{$\pi \cdot\mathrm{Beta}(\alpha_1,\beta_1) \,\, + $\\$(1-\pi)\cdot\,\mathrm{Logit}\cN(\mu_2,\sigma_2)$} & \makecell[l]{$\hat\pi = 0.12\;\; \hat\alpha_1 = 3.99,\ \hat\beta_1 = 4.17$\\$\hat\mu_2 = 0.00,\ \hat\sigma_2 = 0.12$} & 0.80 & 0.78 & 0.74 & -2802.73 \\ 
   \cmidrule(lr){1-6}
\quad \makecell[l]{$\pi \cdot\mathrm{Beta}(\alpha_1,\beta_1) \,\, + $\\$(1-\pi)\cdot\,\mathrm{Kumaraswamy}(a_2,b_2)$} & \makecell[l]{$\hat\pi = 0.96\;\; \hat\alpha_1 = 74.75,\ \hat\beta_1 = 73.97$\\$\hat a_2 = 5.40,\ \hat b_2 = 254.91$} & 0.79 & 0.79 & 0.78 & -2372.34 \\ 
   \cmidrule(lr){1-6}
\quad \makecell[l]{$\pi \cdot\mathrm{vonMises}(\phi_{11},\phi_{21}) \,\, + $\\$(1-\pi)\cdot\,\cTN_{[0,1]}(\mu_2,\sigma_2)$} & \makecell[l]{$\hat\pi = 0.21\;\; \hat\phi_{11} = -2.52,\ \hat\phi_{21} = 0.47$\\$\hat\mu_2 = 0.50,\ \hat\sigma_2 = 0.03$} & 0.80 & 0.78 & 0.74 & -2828.41 \\ 
   \cmidrule(lr){1-6}
\quad \makecell[l]{$\pi \cdot\mathrm{vonMises}(\phi_{11},\phi_{21}) \,\, + $\\$(1-\pi)\cdot\,\mathrm{Logit}\cN(\mu_2,\sigma_2)$} & \makecell[l]{$\hat\pi = 0.21\;\; \hat\phi_{11} = -2.53,\ \hat\phi_{21} = 0.20$\\$\hat\mu_2 = 0.00,\ \hat\sigma_2 = 0.11$} & 0.81 & 0.78 & 0.74 & -2831.43 \\ 
   \cmidrule(lr){1-6}
\quad \makecell[l]{$\pi \cdot\mathrm{vonMises}(\phi_{11},\phi_{21}) \,\, + $\\$(1-\pi)\cdot\,\mathrm{Kumaraswamy}(a_2,b_2)$} & \makecell[l]{$\hat\pi = 0.18\;\; \hat\phi_{11} = -2.49,\ \hat\phi_{21} = -0.14$\\$\hat a_2 = 18.95,\ \hat b_2 = 342874.95$} & 0.80 & 0.77 & 0.72 & -2800.18 \\ 
   \cmidrule(lr){1-6}
\quad \makecell[l]{$\pi \cdot\cTN_{[0,1]}(\mu_1,\sigma_1) \,\, + $\\$(1-\pi)\cdot\,\mathrm{Logit}\cN(\mu_2,\sigma_2)$} & \makecell[l]{$\hat\pi = 0.16\;\; \hat\mu_1 = 0.49,\ \hat\sigma_1 = 0.14$\\$\hat\mu_2 = 0.00,\ \hat\sigma_2 = 0.11$} & 0.80 & 0.78 & 0.74 & -2815.68 \\ 
   \cmidrule(lr){1-6}
\quad \makecell[l]{$\pi \cdot\cTN_{[0,1]}(\mu_1,\sigma_1) \,\, + $\\$(1-\pi)\cdot\,\mathrm{Kumaraswamy}(a_2,b_2)$} & \makecell[l]{$\hat\pi = 0.06\;\; \hat\mu_1 = -1886.75,\ \hat\sigma_1 = 618.82$\\$\hat a_2 = 15.30,\ \hat b_2 = 27397.34$} & 0.78 & 0.75 & 0.70 & -2732.35 \\ 
   \cmidrule(lr){1-6}
\quad \makecell[l]{$\pi \cdot\mathrm{Logit}\cN(\mu_1,\sigma_1) \,\, + $\\$(1-\pi)\cdot\,\mathrm{Kumaraswamy}(a_2,b_2)$} & \makecell[l]{$\hat\pi = 0.88\;\; \hat\mu_1 = 0.00,\ \hat\sigma_1 = 0.12$\\$\hat a_2 = 2.80,\ \hat b_2 = 4.67$} & 0.80 & 0.78 & 0.74 & -2800.84 \\ 
   \cmidrule(lr){1-6}
\textit{Kernel density estimate} & — & 0.81 & 0.78 & 0.71 & — \\ 
   \bottomrule
\end{tabular}
}
\caption{Maximum likelihood fits of generator models for pair 3. As in \cref{tab:fit_v1}, we report parameter estimates, plug-in estimates for $\rho, \tau$, and $\xi$ (sample estimates: $\hat{\rho}_n = 0.84$, $\hat{\tau}_n = 0.80$, $\hat{\xi}_n = 0.73$), and AIC for the parametric generators. The two-component von Mises mixture attains the lowest AIC.}
\label{tab:fit_v3}
\end{table}

\begin{figure}[ht]
    \centering
    \includegraphics[width=0.32\textwidth]{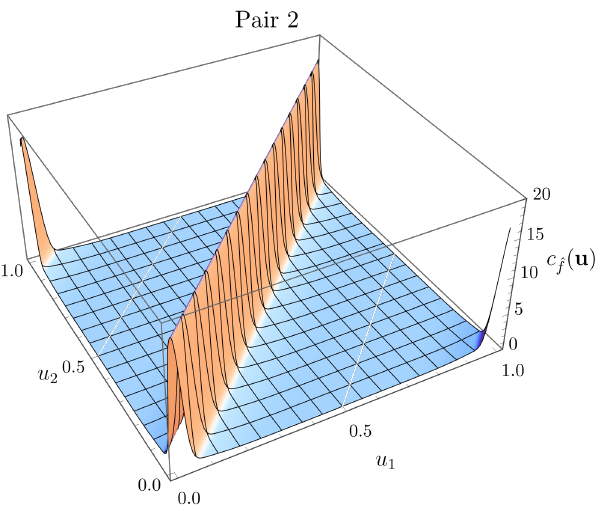}
    \includegraphics[width=0.32\textwidth]{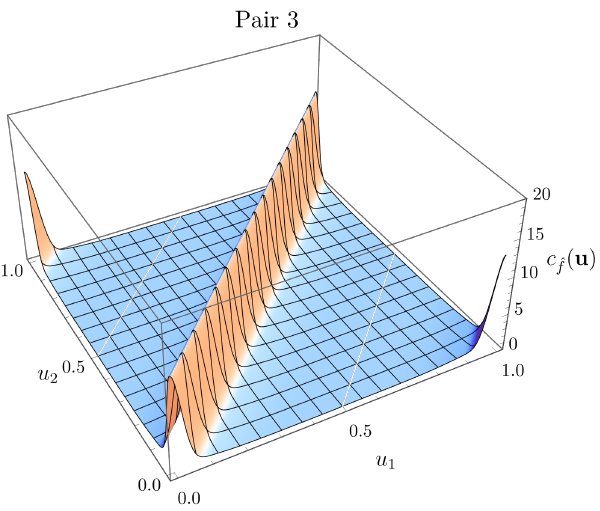}
    \caption{Fitted copula densities for pairs 2 and 3 using the best-fitting von Mises mixture generators from \cref{fig:torus_gens_mix}. These copulas recover both symmetry and asymmetries relative to the main diagonal suggested by the data. Corresponding densities for pairs 1 and 4 are given in \cref{fig:torus_vonMisesmixes14}.}
    \label{fig:torus_vonMisesmixes}
\end{figure}

\begin{figure}[ht]
    \centering
    \includegraphics[width=0.32\textwidth]{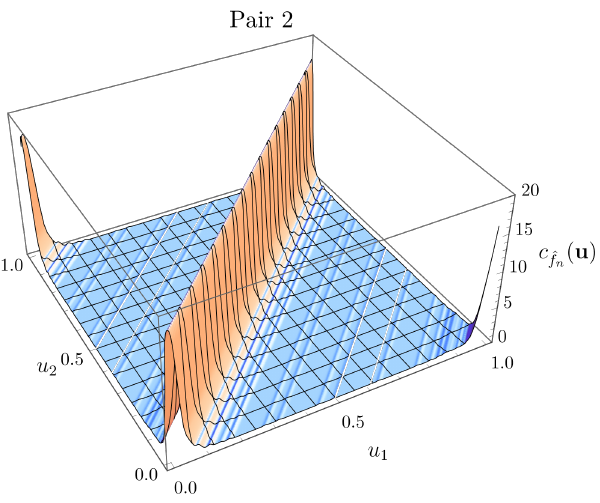} 
    \includegraphics[width=0.32\textwidth]{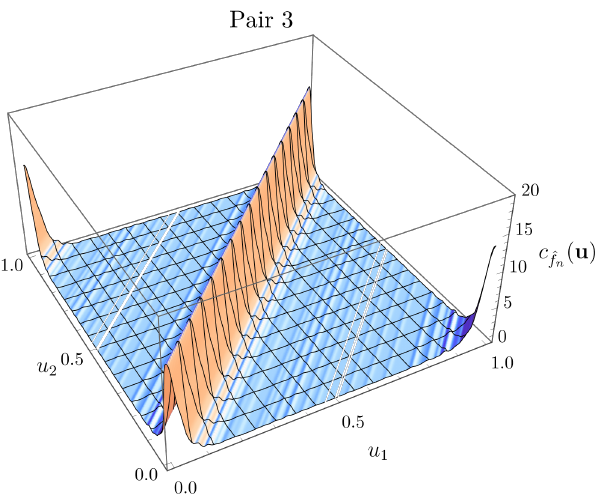}
    \caption{Fitted copula densities using KDE-based generators for pairs 2 and 3. Relative to \cref{fig:torus_vonMisesmixes}, KDE-based copulas capture diagonal asymmetry at least as well. Corresponding densities for pairs 1 and 4 are given in \cref{fig:torus_KDEs14}.}
    \label{fig:torus_KDEs}
\end{figure}

The densities of the fitted copulas themselves based on the fitted von Mises mixture generators are shown in \cref{fig:torus_vonMisesmixes} and based on the fitted KDEs in \cref{fig:torus_KDEs}. We note that in contrast to the copula based on the unimodal parametric generators, both the copulas based on the KDE generator --- and to a lesser extent the copula based on the $2$-component mixtures --- accurately recover some asymmetry with respect to the diagonal $[0,1]^2$. While our analysis assumes iid data, these results do capture what one would intuitively expect if neighboring recording sites (i.e., adjacent channels) were being activated in a temporal sequence during the performance of the memory task. In addition, the density estimate for pair 3 appears to be more symmetric and contains more isolated points off the diagonal than do the other pairs. The physiological significance of this difference is uncertain; it may simply be noise, or it may reflect special features of that particular pair, such as reception of inputs from elsewhere in the hippocampus or the rest of the brain, or perhaps a regulatory role yet to be determined.

It is also interesting to compare the values of Spearman's rho, Kendall's tau, and the Dette--Siburg--Stoimenov coefficient implied by our fitted copula models to their nonparametric counterparts. For each channel pair, we computed the values of all three measures from the best-fitting generators (in each case, the $2$-component von Mises mixture) via numerical integration. All of these computed values agreed closely with their sample versions (shown in the captions below \cref{tab:fit_v1,tab:fit_v2,tab:fit_v3,tab:fit_v4}), differing by no more than a few hundredths. The KDE-based copulas performed similarly well. These results demonstrate that our family is sufficiently flexible to capture the observed CA3 dependence structure, and illustrate the practical value of our framework in domains where circular dependence is of central interest. In particular, the consistently high values of all three coefficients --- across both parametric and nonparametric fits --- independently corroborate the presence of strong positive dependence among the neural phase signals.

\section{Discussion}\label{sec:discussion}

In this work, we introduce a new class $\cC$ of absolutely continuous copulas generated by univariate densities on $[0,1]$. Despite the structural rigidity of the construction, the copula family is surprisingly rich: it includes both simple and highly irregular copulas, admits transparent and easily computed formulas for standard concordance measures, and allows for straightforward statistical inference in both the parametric and nonparametric regimes. In the parametric regime, likelihood-based procedures and results for a copula $C^{\bs}_{f_{\theta}} \in \cC$ are directly inherited from those for the univariate model $f_\theta$. In the nonparametric regime, the existence of partial derivatives of $C^{\bs}_{f}$ allows for desirable large-sample properties for copula estimators constructed from kernel density estimates of the generator $f$. Our simulations show that both the signature and generator can be recovered reliably, and the neural connectivity application of \citet{kleinTorusGraphsMultivariate2020} illustrates that in practice the family can capture bivariate rotational dependence while remaining interpretable and computationally straightforward.

Beyond specific examples, our results show explicitly how the generator $f$ and signature $\bs$ jointly control dependence in the induced copula $C^{\bs}_f$: the signature acts as a pattern of componentwise reflections that fixes the direction of concordance (toggling between mainly positive versus negative association in the bivariate case), while the generator governs the strength and shape of dependence. Specifically, the concentration, symmetry, and modality of the generator translate directly into the magnitude of concordance and into the conditional behaviour that produces the characteristic rotational patterns: given any $d-1$ components of $\bU \sim C^{\bs}_f$, the remaining component (possibly rotated) is distributed according to $f$, so the qualitative features of $f$ propagate directly to $C^{\bs}_f$. A similar mechanism underlies our approach to signature selection: with the wrong signature $\bt \neq \bs$, the wrapped sum $\bigoplus_{j=1}^d (-1)^{t_j}U_j$ is uniform, yielding a powerful diagnostic tool. These features show that qualitative and quantitative aspects of dependence can be read directly from properties of the generator. As a counterpoint, the pathological construction  of $C^{\bs}_{f^*}$ in \cref{sub:univariate} --- which is $(d-1)$-times continuously differentiable everywhere despite violating virtually every regularity or boundedness principle one might reasonably expect of a copula --- demonstrates that in $\cC$, the extreme irregularity of a copula density can coexist with well-behaved copula-level properties.

\subsection{Extensions}

There are many worthwhile avenues to extend this work further. On the methodological side, although we provide closed-form expressions for standard concordance measures in terms of the generator, a more systematic catalogue of such results (including Blomqvist's beta, Gini's coefficient, and Spearman's footrule, among others; \citealp[see, e.g.,][Section 5.1.4]{nelsenIntroductionCopulas2006}) would be valuable. From an applied perspective, mixture constructions or embeddings within vine copula frameworks \citep{aas2009pair,joe2014dependence,czadoAnalyzingDependentData2019} could help extend the practical reach of the family to higher dimensions and more heterogeneous forms of dependence.

In the following subsections, we describe in more detail two additional future directions which illustrate how one might leverage the algebraic and geometric foundations of our construction to expand the family while still preserving its interpretability and richness.

\subsubsection{Hierarchical constructions via multivariate generators}\label{subsub:hierarchicalextension}

Because of its rather rigid dependence structures (e.g., \cref{prop:basicproperties}), the current version of our copula family may not be practically applicable beyond the trivariate case. However, we note here a natural and potentially far-reaching extension of the family: when choosing a generator, instead of a univariate density in $\cF_{[0,1]}$, we may consider a multivariate density $f$ on $[0,1]^{d'}$ and construct a copula density on $[0,1]^{d}$ via composition, where $d \geq 2d' \geq 2$. To describe this construction, we write $\mathfrak{A}(d,d')$ for the set of ordered partitions $\cA = \{A_1,\ldots,A_{d'}\}$ of $\{1,\ldots,d\}$ into $d'$ disjoint subsets with each $|A_j| \geq 2$.\footnote{There are exactly $d'! \sum_{j=0}^{d'} (-1)^j \binom{d}{k} S(d-j, d'-j)$ such ordered partitions, where $S(n,k)$ is a Stirling number of the second kind. When any $|A_j| = 1$, the construction will not yield a valid copula unless the $j$th marginal distribution of $f$ is $\stdunif$.} Let $\cA \in \mathfrak{A}(d,d')$, and let $\bs^{(j)} \in \{0,1\}^{|A_j|}$ be a slot-specific signature for the $j$th argument of $f$. Define the signature tuple $\bs := (\bs^{(1)}, \ldots, \bs^{(d')})$ and for each $j \in \{1,\ldots,d'\}$ and each index $k \in A_j$, let
\[
    \tu_k^{(j)} := u_k^{1 - s_k^{(j)}} (1 - u_k)^{s_k^{(j)}}
\]
and define the $d$-dimensional copula $C^{\cA, \bs}_f$ through its density
\[
    c^{\cA, \bs}_f(\bu) := f\left( \bigoplus_{k \in A_1} \tu_k^{(1)}, \ldots,  \bigoplus_{k \in A_{d'}} \tu_k^{(d')} \right).
\]
As in our original construction, this yields a valid copula density on $[0,1]^{d}$ regardless of the multivariate generator $f$. Our original family $\cC$ then arises as a special case when $d' = 1$, $A_1 = \{1,\ldots,d\}$, and $f$ is univariate.

This extension preserves the essential idea of using wrapped sums to induce dependence, while offering a much richer class of constructions. Each argument of $f$ may now depend on an arbitrary subset of the inputs, with signatures applied independently to each subset. Importantly, this family is closed under recursive application: we may take the multivariate generator $f$ itself to be a copula density defined via the original construction, thereby producing a new copula by using one defined copula as the generator of another. To formalize the idea, consider any positive integer sequence $\{d_m\}$ with $d_0 = d$ and $d_m \geq 2d_{m-1}$ for each $m \geq 1$, define  $\cC^{(0)} := \cC$ and
\[
    \cC^{(m)} := \left\{C^{\cA, \bs}_{c'}: C' \in \cC^{(m-1)}, \mbox{$\cA \in \mathfrak{A}(d_{m},d_{m-1})$}, \mbox{$\bs = (\bs^{(1)}, \ldots, \bs^{(d_{m-1})})$ is a signature tuple} \right\}.
\]
This recursive structure bears analogy to pair-copula constructions \citep{aas2009pair}, although the mechanism here is algebraic rather than conditional; dependence arises through componentwise wrapped sums and signature-based flips, rather than through conditioning or pseudo-observations. The result is a flexible, interpretable, and recursively extensible framework for copula construction. We leave the theoretical and practical development of the enlarged families $\cC^{(m)}$ for future work.

\subsubsection{Isometries on the circle group}\label{subsub:isometries}

An additional perspective on our framework recognizes the underlying group structure of the involved transformations. The operator $\oplus$ is addition in the circle group $\T$, and the binary signature element $s_j \in \{0,1\}$ corresponds to choosing for each input $u_j$ one of two specific isometries on $\T$: the identity $u_j \mapsto u_j$ or the reflection $u_j \mapsto -u_j \bmod{1}$. This viewpoint suggests another generalization. The isometry group on $\T$ equipped with the metric $d_\T(x,y) := \min\{|x-y|, 1 - |x-y|\}$ consists of both reflections and rotations. A complete generalization would therefore involve applying both a discrete reflection $\sigma_j \in \{-1,1\}$ and a continuous rotation $\phi_j \in [0,1)$ to each variable, yielding the transformation  $u_j \mapsto  (\sigma_j u_j + \phi_j) \bmod{1}$. 

This generalization defines a valid and richer class of copulas, which are conveniently parameterized by moving to a more natural representation of the circle group via the group isomorphism $z(u) := e^{2 \pi \imi u}$. This map connects the additive group $(\T, \oplus)$ to the multiplicative group $(S^1, \times)$, where $S^1 = \{z \in \C: |z| = 1\}$ is the unit circle in $\C$ and $\times$ denotes standard multiplication. In the complex representation, the transformation becomes clearer: rotations in $\T$ correspond to multiplication by a unit complex number $\psi_j = e^{2 \pi \imi \phi_j} \in S^1$, reflections in $\T$ correspond to complex conjugation $z_j \mapsto \bar{z}_j = z_j^{-1}$, and wrapped sums in $\T$ correspond to standard multiplication in $S^1$. Thus, the entire generalized construction can be reformulated on $S^1$, and the generalized copula density is given by
\[
    c_f^{\bsigma, \bpsi}(\bu)
    :=
    f \left( \frac{1}{2\pi}  \arg \left( \prod_{j=1}^d z(u_j)^{\sigma_j} \psi_j \right)\right),
\]
where $\bsigma = (\sigma_1, \ldots, \sigma_d) \in \{-1,1\}^d$, $\bpsi = (\psi_1, \ldots, \psi_d) \in (S^1)^d$, and $\arg{z}$ denotes the principal value of $z \in \C$ in the interval $[0, 2\pi)$. This reparameterization explicitly connects our copula family to the domain of Fourier analysis and signal processing, where transformations of phase and orientation are of fundamental importance.

\section*{Acknowledgments}

We are grateful to Yanbo Tang, Amy Goldwater, Radu Craiu and Stanislav Volgushev for their careful reading of an early unpublished version of this paper \citep{lalancetteNewFamilySmooth2022} and their constructive suggestions. Johanna Ne{\v{s}}lehov{\'a} provided insightful feedback that shaped an intermediate revision which appeared in \citet{zimmermanCopulasNewTheory2025}. A question by Stanislav Volgushev inspired the extension presented in \cref{subsub:hierarchicalextension}, and Amy Goldwater proofread a later version of the manuscript. ML gratefully acknowledges funding from the Natural Sciences and Engineering Research Council of Canada through a Postdoctoral Fellowship and Discovery Grant.

\FloatBarrier


\newpage
\appendix

\section{Additional plots and tables for Section~\ref{sub:neural}}\label{app:additionalplots}

\begin{figure}[ht]
    \centering
    \includegraphics[width=0.49\linewidth]{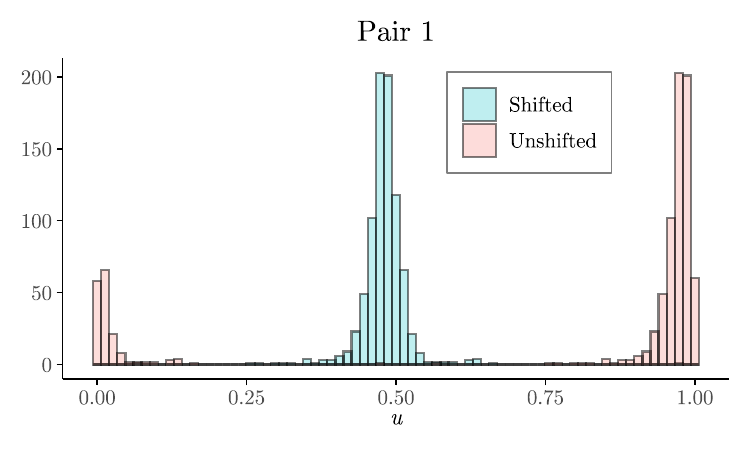}
    \includegraphics[width=0.49\linewidth]{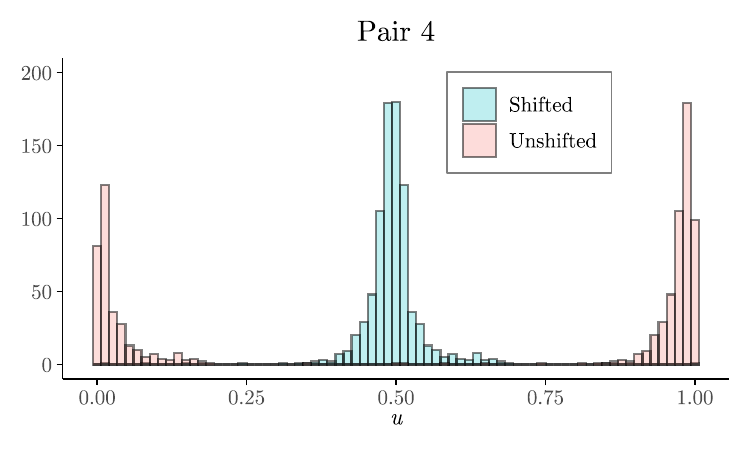}
    \caption{Histograms of wrapped differences for pairs 1 and 4 before (red) and after (blue) shifting by $+1/2 \bmod{1}$.}
    \label{fig:torus_histograms14}
\end{figure}

\begin{figure}[ht]
    \centering
    \includegraphics[width=0.49\linewidth]{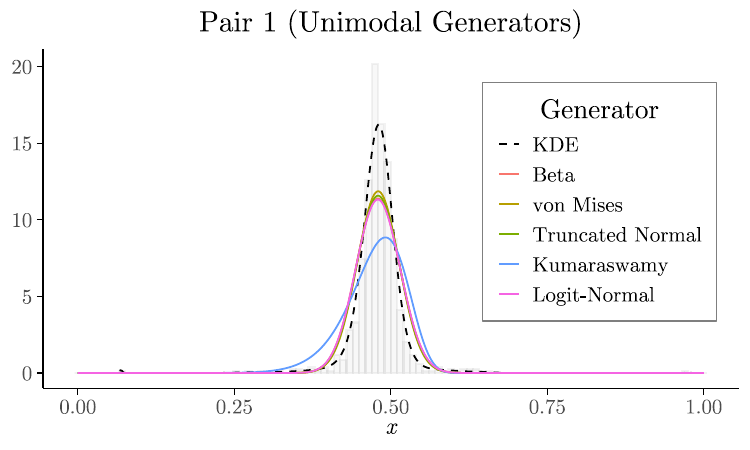}
    \includegraphics[width=0.49\linewidth]{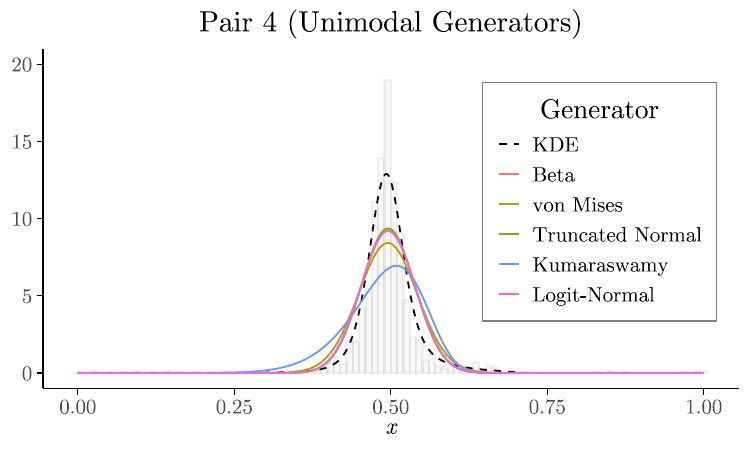}
    \caption{Unimodal parametric fits of the generator (beta, truncated normal, Kumaraswamy, logit-normal, and von Mises) overlaid on the shifted wrapped difference histograms for pairs 1 and 4.}
    \label{fig:torus_gens_uni14}
\end{figure}

\begin{figure}[ht]
    \centering
    \includegraphics[width=0.49\linewidth]{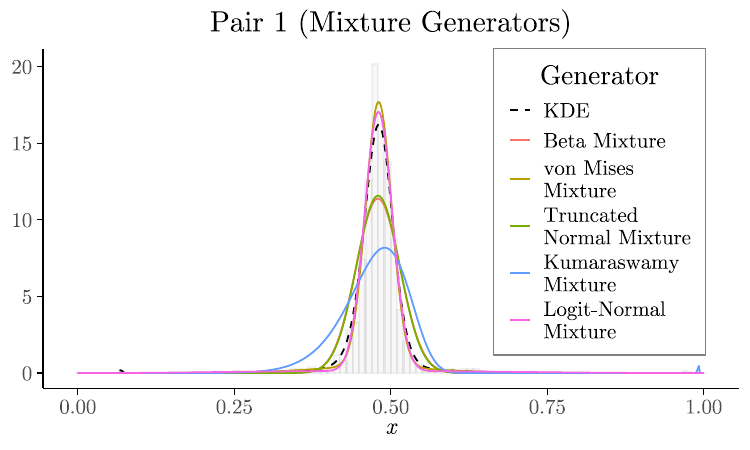}
    \includegraphics[width=0.49\linewidth]{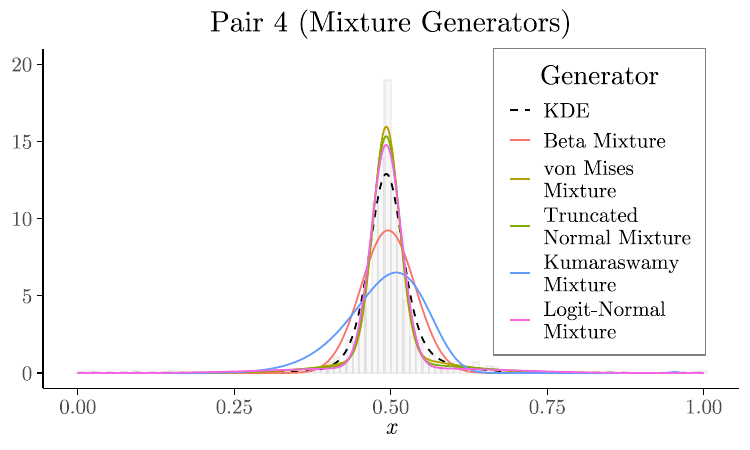}
    \caption{$2$-component mixture fits of the same parametric families overlaid on the histograms of the shifted wrapped differences for pairs 1 and 4.}
    \label{fig:torus_gens_mix14}
\end{figure}

\begin{figure}[ht]
    \centering
    \includegraphics[width=0.32\textwidth]{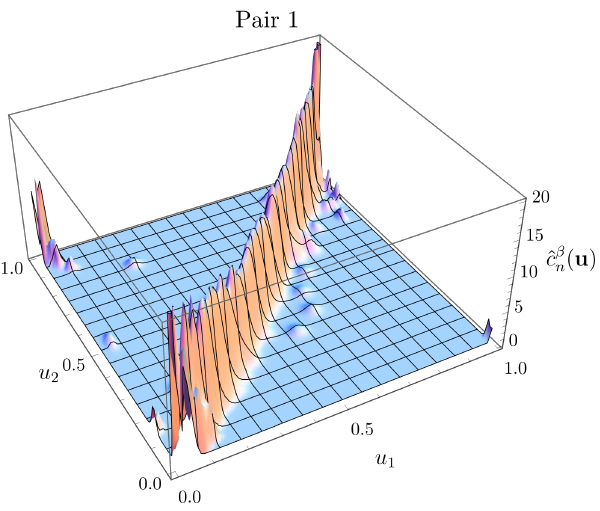}
    \includegraphics[width=0.32\textwidth]{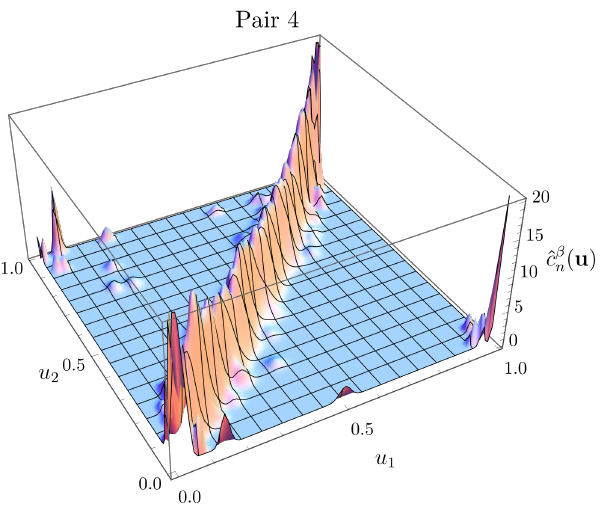}
    \caption{Empirical beta copula densities for CA3 channel pairs 1 and 4.}
    \label{fig:torus_empiricalbetas14}
\end{figure}

\begin{figure}[ht]
    \centering
    \includegraphics[width=0.32\textwidth]{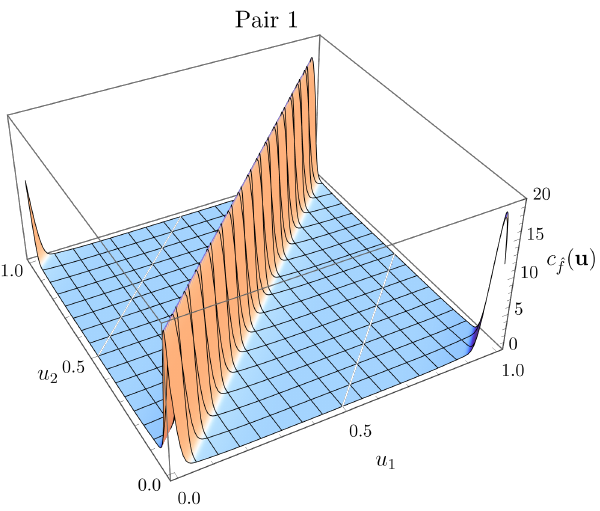}
    \includegraphics[width=0.32\textwidth]{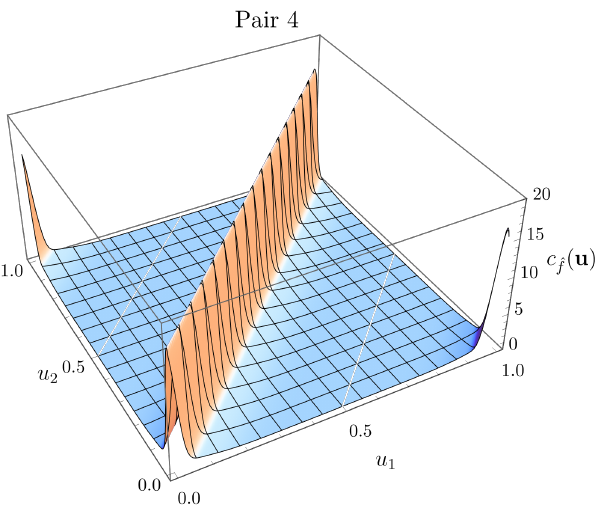}
    \caption{Fitted copula densities using the 2-component von Mises generator for pairs 1 and 4.}
    \label{fig:torus_vonMisesmixes14}
\end{figure}

\begin{figure}[ht]
    \centering
    \includegraphics[width=0.32\textwidth]{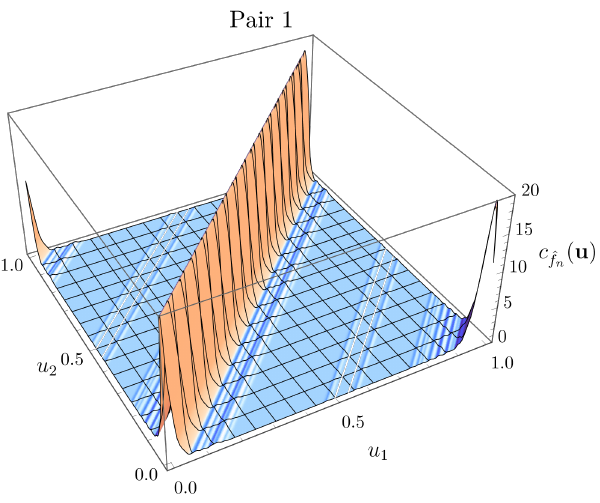}
    \includegraphics[width=0.32\textwidth]{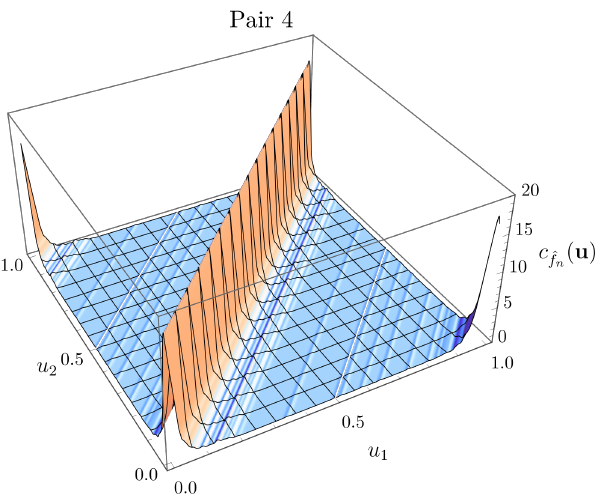}
    \caption{Fitted copula densities using the KDE generator for pairs 1 and 4.}
    \label{fig:torus_KDEs14}
\end{figure}

\begin{table}[ht]
\centering
\resizebox{.94\linewidth}{!}{
\begin{tabular}{llcccc}
  \toprule
Generator & Estimated parameters & $\hat\rho$ & $\hat\tau$ & $\hat\xi$ & AIC \\ 
  \cmidrule(lr){1-1}\cmidrule(lr){2-2}\cmidrule(lr){3-5}\cmidrule(r){6-6}
\multicolumn{1}{@{}l}{\textit{Unimodal generators}} &  &  &  &  &  \\ 
  \quad $\mathrm{Beta}(\alpha,\beta)$ & $\hat\alpha = 97.74,\ \hat\beta = 106.05$ & 0.81 & 0.83 & 0.87 & -3130.26 \\ 
   \cmidrule(lr){1-6}
\quad $\mathrm{vonMises}(\phi_1,\phi_2)$ & $\hat\phi_1 = -22.47,\ \hat\phi_2 = 2.87$ & 0.82 & 0.84 & 0.88 & -3183.10 \\ 
   \cmidrule(lr){1-6}
\quad $\cTN_{[0,1]}(\mu,\sigma)$ & $\hat\mu = 0.48,\ \hat\sigma = 0.03$ & 0.82 & 0.84 & 0.88 & -3152.56 \\ 
   \cmidrule(lr){1-6}
\quad $\mathrm{Logit}\cN(\mu,\sigma)$ & $\hat\mu = -0.08,\ \hat\sigma = 0.14$ & 0.81 & 0.83 & 0.87 & -3122.10 \\ 
   \cmidrule(lr){1-6}
\quad $\mathrm{Kumaraswamy}(a,b)$ & $\hat a = 11.86,\ \hat b = 4191.35$ & 0.77 & 0.75 & 0.72 & -2926.22 \\ 
   \cmidrule(lr){1-6}
\addlinespace
\multicolumn{1}{@{}l}{\textit{Two-component mixtures}} &  &  &  &  &  \\ 
  \quad \makecell[l]{$\pi \cdot\mathrm{Beta}(\alpha_1,\beta_1) \,\, + $\\$(1-\pi)\cdot\,\mathrm{Beta}(\alpha_2,\beta_2)$} & \makecell[l]{$\hat\pi = 0.00\;\; \hat\alpha_1 = 171.33,\ \hat\beta_1 = 224.05$\\$\hat\alpha_2 = 97.74,\ \hat\beta_2 = 106.05$} & 0.81 & 0.83 & 0.87 & -3124.26 \\ 
   \cmidrule(lr){1-6}
\quad \makecell[l]{$\pi \cdot\mathrm{vonMises}(\phi_{11},\phi_{21}) \,\, + $\\$(1-\pi)\cdot\,\mathrm{vonMises}(\phi_{12},\phi_{22})$} & \makecell[l]{$\hat\pi = 0.10\;\; \hat\phi_{11} = -2.96,\ \hat\phi_{21} = 0.57$\\$\hat\phi_{12} = -58.61,\ \hat\phi_{22} = 7.19$} & 0.83 & 0.85 & 0.87 & \textbf{-3589.16} \\ 
   \cmidrule(lr){1-6}
\quad \makecell[l]{$\pi \cdot\cTN_{[0,1]}(\mu_1,\sigma_1) \,\, + $\\$(1-\pi)\cdot\,\cTN_{[0,1]}(\mu_2,\sigma_2)$} & \makecell[l]{$\hat\pi = 1.00\;\; \hat\mu_1 = 0.48,\ \hat\sigma_1 = 0.03$\\$\hat\mu_2 = 0.75,\ \hat\sigma_2 = 0.01$} & 0.82 & 0.84 & 0.88 & -3146.56 \\ 
   \cmidrule(lr){1-6}
\quad \makecell[l]{$\pi \cdot\mathrm{Logit}\cN(\mu_1,\sigma_1) \,\, + $\\$(1-\pi)\cdot\,\mathrm{Logit}\cN(\mu_2,\sigma_2)$} & \makecell[l]{$\hat\pi = 0.94\;\; \hat\mu_1 = -0.08,\ \hat\sigma_1 = 0.09$\\$\hat\mu_2 = -0.10,\ \hat\sigma_2 = 0.77$} & 0.83 & 0.84 & 0.86 & -3551.95 \\ 
   \cmidrule(lr){1-6}
\quad \makecell[l]{$\pi \cdot\mathrm{Kumaraswamy}(a_1,b_1) \,\, + $\\$(1-\pi)\cdot\,\mathrm{Kumaraswamy}(a_2,b_2)$} & \makecell[l]{$\hat\pi = 0.00\;\; \hat a_1 = 905.52,\ \hat b_1 = 1858.48$\\$\hat a_2 = 10.96,\ \hat b_2 = 2245.17$} & 0.75 & 0.74 & 0.70 & -2928.24 \\ 
   \cmidrule(lr){1-6}
\quad \makecell[l]{$\pi \cdot\mathrm{Beta}(\alpha_1,\beta_1) \,\, + $\\$(1-\pi)\cdot\,\mathrm{vonMises}(\phi_{12},\phi_{22})$} & \makecell[l]{$\hat\pi = 0.90\;\; \hat\alpha_1 = 278.28,\ \hat\beta_1 = 300.75$\\$\hat\phi_{12} = -2.98,\ \hat\phi_{22} = 0.57$} & 0.83 & 0.85 & 0.87 & -3588.63 \\ 
   \cmidrule(lr){1-6}
\quad \makecell[l]{$\pi \cdot\mathrm{Beta}(\alpha_1,\beta_1) \,\, + $\\$(1-\pi)\cdot\,\cTN_{[0,1]}(\mu_2,\sigma_2)$} & \makecell[l]{$\hat\pi = 0.95\;\; \hat\alpha_1 = 227.78,\ \hat\beta_1 = 246.64$\\$\hat\mu_2 = -29.20,\ \hat\sigma_2 = 12.75$} & 0.82 & 0.83 & 0.85 & -3526.61 \\ 
   \cmidrule(lr){1-6}
\quad \makecell[l]{$\pi \cdot\mathrm{Beta}(\alpha_1,\beta_1) \,\, + $\\$(1-\pi)\cdot\,\mathrm{Logit}\cN(\mu_2,\sigma_2)$} & \makecell[l]{$\hat\pi = 0.07\;\; \hat\alpha_1 = 4.46,\ \hat\beta_1 = 4.90$\\$\hat\mu_2 = -0.08,\ \hat\sigma_2 = 0.09$} & 0.83 & 0.84 & 0.86 & -3559.58 \\ 
   \cmidrule(lr){1-6}
\quad \makecell[l]{$\pi \cdot\mathrm{Beta}(\alpha_1,\beta_1) \,\, + $\\$(1-\pi)\cdot\,\mathrm{Kumaraswamy}(a_2,b_2)$} & \makecell[l]{$\hat\pi = 0.01\;\; \hat\alpha_1 = 573.60,\ \hat\beta_1 = 335.40$\\$\hat a_2 = 10.14,\ \hat b_2 = 1377.09$} & 0.73 & 0.71 & 0.67 & -2959.96 \\ 
   \cmidrule(lr){1-6}
\quad \makecell[l]{$\pi \cdot\mathrm{vonMises}(\phi_{11},\phi_{21}) \,\, + $\\$(1-\pi)\cdot\,\cTN_{[0,1]}(\mu_2,\sigma_2)$} & \makecell[l]{$\hat\pi = 0.04\;\; \hat\phi_{11} = -4.13,\ \hat\phi_{21} = 5.90$\\$\hat\mu_2 = 0.48,\ \hat\sigma_2 = 0.02$} & 0.83 & 0.85 & 0.88 & -3405.34 \\ 
   \cmidrule(lr){1-6}
\quad \makecell[l]{$\pi \cdot\mathrm{vonMises}(\phi_{11},\phi_{21}) \,\, + $\\$(1-\pi)\cdot\,\mathrm{Logit}\cN(\mu_2,\sigma_2)$} & \makecell[l]{$\hat\pi = 0.10\;\; \hat\phi_{11} = -2.98,\ \hat\phi_{21} = 0.57$\\$\hat\mu_2 = -0.08,\ \hat\sigma_2 = 0.08$} & 0.83 & 0.85 & 0.87 & -3588.53 \\ 
   \cmidrule(lr){1-6}
\quad \makecell[l]{$\pi \cdot\mathrm{vonMises}(\phi_{11},\phi_{21}) \,\, + $\\$(1-\pi)\cdot\,\mathrm{Kumaraswamy}(a_2,b_2)$} & \makecell[l]{$\hat\pi = 0.01\;\; \hat\phi_{11} = -231.52,\ \hat\phi_{21} = -231.89$\\$\hat a_2 = 15.48,\ \hat b_2 = 63444.47$} & 0.80 & 0.79 & 0.76 & -3273.31 \\ 
   \cmidrule(lr){1-6}
\quad \makecell[l]{$\pi \cdot\cTN_{[0,1]}(\mu_1,\sigma_1) \,\, + $\\$(1-\pi)\cdot\,\mathrm{Logit}\cN(\mu_2,\sigma_2)$} & \makecell[l]{$\hat\pi = 0.08\;\; \hat\mu_1 = 0.47,\ \hat\sigma_1 = 0.12$\\$\hat\mu_2 = -0.08,\ \hat\sigma_2 = 0.09$} & 0.83 & 0.84 & 0.87 & -3576.23 \\ 
   \cmidrule(lr){1-6}
\quad \makecell[l]{$\pi \cdot\cTN_{[0,1]}(\mu_1,\sigma_1) \,\, + $\\$(1-\pi)\cdot\,\mathrm{Kumaraswamy}(a_2,b_2)$} & \makecell[l]{$\hat\pi = 0.03\;\; \hat\mu_1 = -146586.09,\ \hat\sigma_1 = 73195.44$\\$\hat a_2 = 19.95,\ \hat b_2 = 1552490.19$} & 0.81 & 0.79 & 0.76 & -3477.42 \\ 
   \cmidrule(lr){1-6}
\quad \makecell[l]{$\pi \cdot\mathrm{Logit}\cN(\mu_1,\sigma_1) \,\, + $\\$(1-\pi)\cdot\,\mathrm{Kumaraswamy}(a_2,b_2)$} & \makecell[l]{$\hat\pi = 0.94\;\; \hat\mu_1 = -0.08,\ \hat\sigma_1 = 0.09$\\$\hat a_2 = 2.79,\ \hat b_2 = 5.07$} & 0.83 & 0.84 & 0.86 & -3556.47 \\ 
   \cmidrule(lr){1-6}
\textit{Kernel density estimate} & — & 0.83 & 0.82 & 0.81 & — \\ 
   \bottomrule
\end{tabular}
}
\caption{Maximum likelihood fits of generator models for pair 1. Rows list unimodal families, $2$-component mixtures, and the KDE. Columns report MLEs of generator parameters, plug-in estimates of Spearman's rho, Kendall's tau, and the Dette--Siburg--Stoimenov coefficient (sample estimates: $\hat{\rho}_n = 0.85$, $\hat{\tau}_n = 0.84$, $\hat{\xi}_n = 0.82$), and the AIC for parametric models. By AIC, the best-fitting parametric generator is the $2$-component von Mises mixture.} 
\label{tab:fit_v1}
\end{table}

\begin{table}[ht]
\centering
\resizebox{.94\linewidth}{!}{
\begin{tabular}{llcccc}
  \toprule
Generator & Estimated parameters & $\hat\rho$ & $\hat\tau$ & $\hat\xi$ & AIC \\ 
  \cmidrule(lr){1-1}\cmidrule(lr){2-2}\cmidrule(lr){3-5}\cmidrule(r){6-6}
\multicolumn{1}{@{}l}{\textit{Unimodal generators}} &  &  &  &  &  \\ 
  \quad $\mathrm{Beta}(\alpha,\beta)$ & $\hat\alpha = 66.73,\ \hat\beta = 67.88$ & 0.80 & 0.81 & 0.82 & -2607.40 \\ 
   \cmidrule(lr){1-6}
\quad $\mathrm{vonMises}(\phi_1,\phi_2)$ & $\hat\phi_1 = -11.54,\ \hat\phi_2 = 0.35$ & 0.78 & 0.76 & 0.70 & -2715.85 \\ 
   \cmidrule(lr){1-6}
\quad $\cTN_{[0,1]}(\mu,\sigma)$ & $\hat\mu = 0.50,\ \hat\sigma = 0.04$ & 0.81 & 0.81 & 0.82 & -2625.86 \\ 
   \cmidrule(lr){1-6}
\quad $\mathrm{Logit}\cN(\mu,\sigma)$ & $\hat\mu = -0.02,\ \hat\sigma = 0.17$ & 0.80 & 0.81 & 0.82 & -2601.03 \\ 
   \cmidrule(lr){1-6}
\quad $\mathrm{Kumaraswamy}(a,b)$ & $\hat a = 9.66,\ \hat b = 609.33$ & 0.74 & 0.72 & 0.68 & -2414.32 \\ 
   \cmidrule(lr){1-6}
\addlinespace
\multicolumn{1}{@{}l}{\textit{Two-component mixtures}} &  &  &  &  &  \\ 
  \quad \makecell[l]{$\pi \cdot\mathrm{Beta}(\alpha_1,\beta_1) \,\, + $\\$(1-\pi)\cdot\,\mathrm{Beta}(\alpha_2,\beta_2)$} & \makecell[l]{$\hat\pi = 0.00\;\; \hat\alpha_1 = 119.02,\ \hat\beta_1 = 146.92$\\$\hat\alpha_2 = 66.73,\ \hat\beta_2 = 67.88$} & 0.80 & 0.81 & 0.82 & -2601.40 \\ 
   \cmidrule(lr){1-6}
\quad \makecell[l]{$\pi \cdot\mathrm{vonMises}(\phi_{11},\phi_{21}) \,\, + $\\$(1-\pi)\cdot\,\mathrm{vonMises}(\phi_{12},\phi_{22})$} & \makecell[l]{$\hat\pi = 0.22\;\; \hat\phi_{11} = -3.31,\ \hat\phi_{21} = -0.13$\\$\hat\phi_{12} = -59.13,\ \hat\phi_{22} = 2.72$} & 0.83 & 0.81 & 0.78 & \textbf{-3162.54} \\ 
   \cmidrule(lr){1-6}
\quad \makecell[l]{$\pi \cdot\cTN_{[0,1]}(\mu_1,\sigma_1) \,\, + $\\$(1-\pi)\cdot\,\cTN_{[0,1]}(\mu_2,\sigma_2)$} & \makecell[l]{$\hat\pi = 0.83\;\; \hat\mu_1 = 0.49,\ \hat\sigma_1 = 0.02$\\$\hat\mu_2 = 0.51,\ \hat\sigma_2 = 0.12$} & 0.83 & 0.81 & 0.78 & -3134.05 \\ 
   \cmidrule(lr){1-6}
\quad \makecell[l]{$\pi \cdot\mathrm{Logit}\cN(\mu_1,\sigma_1) \,\, + $\\$(1-\pi)\cdot\,\mathrm{Logit}\cN(\mu_2,\sigma_2)$} & \makecell[l]{$\hat\pi = 0.87\;\; \hat\mu_1 = -0.03,\ \hat\sigma_1 = 0.10$\\$\hat\mu_2 = 0.01,\ \hat\sigma_2 = 0.63$} & 0.82 & 0.80 & 0.77 & -3053.37 \\ 
   \cmidrule(lr){1-6}
\quad \makecell[l]{$\pi \cdot\mathrm{Kumaraswamy}(a_1,b_1) \,\, + $\\$(1-\pi)\cdot\,\mathrm{Kumaraswamy}(a_2,b_2)$} & \makecell[l]{$\hat\pi = 0.00\;\; \hat a_1 = 125.16,\ \hat b_1 = 339.32$\\$\hat a_2 = 9.06,\ \hat b_2 = 405.88$} & 0.72 & 0.70 & 0.67 & -2419.25 \\ 
   \cmidrule(lr){1-6}
\quad \makecell[l]{$\pi \cdot\mathrm{Beta}(\alpha_1,\beta_1) \,\, + $\\$(1-\pi)\cdot\,\mathrm{vonMises}(\phi_{12},\phi_{22})$} & \makecell[l]{$\hat\pi = 0.78\;\; \hat\alpha_1 = 287.98,\ \hat\beta_1 = 296.52$\\$\hat\phi_{12} = -3.33,\ \hat\phi_{22} = -0.12$} & 0.83 & 0.81 & 0.78 & -3161.97 \\ 
   \cmidrule(lr){1-6}
\quad \makecell[l]{$\pi \cdot\mathrm{Beta}(\alpha_1,\beta_1) \,\, + $\\$(1-\pi)\cdot\,\cTN_{[0,1]}(\mu_2,\sigma_2)$} & \makecell[l]{$\hat\pi = 0.93\;\; \hat\alpha_1 = 160.27,\ \hat\beta_1 = 165.15$\\$\hat\mu_2 = 31.72,\ \hat\sigma_2 = 163.24$} & 0.80 & 0.79 & 0.76 & -3043.33 \\ 
   \cmidrule(lr){1-6}
\quad \makecell[l]{$\pi \cdot\mathrm{Beta}(\alpha_1,\beta_1) \,\, + $\\$(1-\pi)\cdot\,\mathrm{Logit}\cN(\mu_2,\sigma_2)$} & \makecell[l]{$\hat\pi = 0.11\;\; \hat\alpha_1 = 3.46,\ \hat\beta_1 = 3.18$\\$\hat\mu_2 = -0.03,\ \hat\sigma_2 = 0.10$} & 0.81 & 0.80 & 0.76 & -3078.34 \\ 
   \cmidrule(lr){1-6}
\quad \makecell[l]{$\pi \cdot\mathrm{Beta}(\alpha_1,\beta_1) \,\, + $\\$(1-\pi)\cdot\,\mathrm{Kumaraswamy}(a_2,b_2)$} & \makecell[l]{$\hat\pi = 0.89\;\; \hat\alpha_1 = 119.83,\ \hat\beta_1 = 124.23$\\$\hat a_2 = 9.96,\ \hat b_2 = 242.41$} & 0.82 & 0.82 & 0.81 & -2902.35 \\ 
   \cmidrule(lr){1-6}
\quad \makecell[l]{$\pi \cdot\mathrm{vonMises}(\phi_{11},\phi_{21}) \,\, + $\\$(1-\pi)\cdot\,\cTN_{[0,1]}(\mu_2,\sigma_2)$} & \makecell[l]{$\hat\pi = 0.01\;\; \hat\phi_{11} = 5.00,\ \hat\phi_{21} = 4.98$\\$\hat\mu_2 = 0.50,\ \hat\sigma_2 = 0.04$} & 0.80 & 0.80 & 0.80 & -2853.92 \\ 
   \cmidrule(lr){1-6}
\quad \makecell[l]{$\pi \cdot\mathrm{vonMises}(\phi_{11},\phi_{21}) \,\, + $\\$(1-\pi)\cdot\,\mathrm{Logit}\cN(\mu_2,\sigma_2)$} & \makecell[l]{$\hat\pi = 0.22\;\; \hat\phi_{11} = -3.34,\ \hat\phi_{21} = -0.12$\\$\hat\mu_2 = -0.03,\ \hat\sigma_2 = 0.08$} & 0.83 & 0.81 & 0.78 & -3161.87 \\ 
   \cmidrule(lr){1-6}
\quad \makecell[l]{$\pi \cdot\mathrm{vonMises}(\phi_{11},\phi_{21}) \,\, + $\\$(1-\pi)\cdot\,\mathrm{Kumaraswamy}(a_2,b_2)$} & \makecell[l]{$\hat\pi = 0.01\;\; \hat\phi_{11} = 8.86,\ \hat\phi_{21} = 1.29$\\$\hat a_2 = 9.88,\ \hat b_2 = 694.77$} & 0.73 & 0.71 & 0.67 & -2554.03 \\ 
   \cmidrule(lr){1-6}
\quad \makecell[l]{$\pi \cdot\cTN_{[0,1]}(\mu_1,\sigma_1) \,\, + $\\$(1-\pi)\cdot\,\mathrm{Logit}\cN(\mu_2,\sigma_2)$} & \makecell[l]{$\hat\pi = 0.17\;\; \hat\mu_1 = 0.51,\ \hat\sigma_1 = 0.12$\\$\hat\mu_2 = -0.03,\ \hat\sigma_2 = 0.09$} & 0.83 & 0.81 & 0.78 & -3133.49 \\ 
   \cmidrule(lr){1-6}
\quad \makecell[l]{$\pi \cdot\cTN_{[0,1]}(\mu_1,\sigma_1) \,\, + $\\$(1-\pi)\cdot\,\mathrm{Kumaraswamy}(a_2,b_2)$} & \makecell[l]{$\hat\pi = 0.08\;\; \hat\mu_1 = 20192.22,\ \hat\sigma_1 = 375986.40$\\$\hat a_2 = 19.99,\ \hat b_2 = 956081.16$} & 0.79 & 0.76 & 0.69 & -3003.76 \\ 
   \cmidrule(lr){1-6}
\quad \makecell[l]{$\pi \cdot\mathrm{Logit}\cN(\mu_1,\sigma_1) \,\, + $\\$(1-\pi)\cdot\,\mathrm{Kumaraswamy}(a_2,b_2)$} & \makecell[l]{$\hat\pi = 0.89\;\; \hat\mu_1 = -0.03,\ \hat\sigma_1 = 0.10$\\$\hat a_2 = 2.62,\ \hat b_2 = 3.33$} & 0.81 & 0.80 & 0.76 & -3077.10 \\ 
   \cmidrule(lr){1-6}
\textit{Kernel density estimate} & — & 0.82 & 0.80 & 0.74 & — \\ 
   \bottomrule
\end{tabular}
}
\caption{Maximum likelihood fits of generator models for pair 4. As in \cref{tab:fit_v1}, we report parameter estimates, plug-in estimates for $\rho, \tau$, and $\xi$ (sample estimates: $\hat{\rho}_n = 0.86$, $\hat{\tau}_n = 0.83$, $\hat{\xi}_n = 0.77$), and AIC for the parametric generators. The two-component von Mises mixture attains the lowest AIC.}
\label{tab:fit_v4}
\end{table}

\end{document}